\documentclass{article}
\usepackage{amsmath, amsthm, amssymb, mathrsfs, enumerate, color, graphicx, caption, subcaption}

\usepackage{framed}

\DeclareMathAlphabet{\mathonebb}{U}{bbold}{m}{n}
\newcommand{\one}{\ensuremath{\mathonebb{1}}}

\def\vp{\varphi}

\def\R{{\mathbb R}}

\def\RR{{\mathbb R}}

\def\e{\varepsilon}
\def\vp{\varphi}

\def\oh{\frac{1}{2}}

\def\la{\langle}
\def\ra{\rangle}
\def\({\left(}
\def\){\right)}
\newcommand{\abs}[1]{\left\vert{#1}\right\vert}
\newcommand{\norm}[1]{\left\Vert{#1}\right\Vert}
\newcommand{\He}{\mathbf{H}_{ext}}
\newcommand{\GM}{\mathcal{G}_{\mathcal{M},\kappa}}
\newcommand{\mM}{\mathcal{M}}
\newcommand{\mH}{\mathcal{H}}

\newcommand{\bH}{\mathbf{H}}
\newcommand{\bA}{\mathbf{A}}
\newcommand{\be}{\begin{equation}}
\newcommand{\ee}{\end{equation}}
\DeclareMathOperator{\esssup}{ess\, sup}
\DeclareMathOperator{\essinf}{ess\, inf}

\newtheorem{lem}{Lemma}[section]

\newtheorem{theorem}{Theorem}[section]

\newtheorem{remark}{Remark}[section]

\newtheorem{lemma}[theorem]{Lemma}

\newtheorem{prop}[theorem]{Proposition}

\title{Persistence of superconductivity in thin shells beyond $H_{c1}$}

\author{
Andres Contreras 
\thanks{Department of Mathematics, University of Toronto,
The Fields Institute for Research in Mathematical Sciences. {\ttfamily ancontre@umail.iu.edu}}
\thanks{Department of Mathematical Sciences, New Mexico State University.}
\and
Xavier Lamy
\thanks{Universit\'e de Lyon, Institut Camille Jordan,  CNRS UMR 5208, Universit\'e Lyon 1,  France. {\ttfamily xlamy@math.univ-lyon1.fr}
}
\thanks{Department of Mathematics and Statistics, McMaster University.} 
}

\begin{document}

\maketitle

\begin{abstract}
In Ginzburg-Landau theory, a strong magnetic field is responsible for
the breakdown of superconductivity. This work is concerned with the
identification of the region where superconductivity persists, in a
thin shell superconductor modeled by a compact surface $\mathcal M\subset\mathbb R^3$, as the intensity $h$ of the external magnetic field is raised above $H_{c1}$. Using a mean field reduction approach devised by Sandier and Serfaty as the Ginzburg-Landau parameter $\kappa$ goes to infinity,
 we are led to studying a \textit{two-sided} obstacle problem.
 We show that superconductivity survives in a neighborhood of size
$\(H_{c1}/h\)^{1/3}$ of the zero locus of the normal component
$H$ of the field. We also describe intermediate regimes, focusing first on a 
symmetric model problem. In the general case, we prove that  a striking phenomenon we
call {\it freezing of the boundary} takes place: one component of the
superconductivity region is insensitive to small changes in the field.
\end{abstract}

\section{Introduction}\label{s_intro}

Let $\mathcal{M}$ be a compact surface homeomorphic to $\mathbb{S}^2$, embedded in
$\mathbb{R}^3.$ For $\kappa, h>0$ and $\mathbf{A}$ a vector field on
$\mathcal{M},$ we consider the Ginzburg-Landau functional
$\mathcal{G}_{\mathcal{M},\kappa}:H^1(\mathcal{M};\mathbb{C})\to\mathbb{R}_{+},$
\be\label{GLS}
\GM(\psi)=\int_\mM\(\abs{\nabla_\mM-ih\mathbf{A}\psi}^2+\frac{\kappa^2}{2}\(\abs{\psi}^2-1\)^2\)d\mH_{\mM}^2(x).
\ee
The functional $\GM$ arises as the $\Gamma$-limit (see \cite{CS}) of the full 3d  Ginzburg-Landau energy

\begin{equation}\label{GL3D}
\begin{split}
G_{\e,\kappa}(\psi,A)& =\frac{1}{\varepsilon}\Bigg[\int_{\Omega_\e}\(\abs{(\nabla-iA)\psi}^2+\frac{\kappa^2}{2}\(\abs{\psi}^2-1\)^2\)dx \\
&\quad +\int_{\R^3}\abs{\nabla\times A-\He}^2 dx\Bigg].
\end{split}
\end{equation}
where for all $\e>0$ sufficiently small, $\Omega_\e$ corresponds to a uniform tubular neighborhood of $\mathcal{M}.$
In \eqref{GL3D} $\He$ is the external magnetic field. As $\e \to 0 ,$ the field completely penetrates the sample which then implies that in the $\Gamma$-limit $A$ is prescribed to be equal to $\mathbf{A},$ the tangential component of a divergence free vector field $\bA^e$ such that $\nabla\times h \bA^e=\He.$

A central question in Ginzburg-Landau theory is the determination of the so-called {\it critical fields}.
The first critical field corresponds to the appearance of zeros of $\psi$ carrying non-trivial degree -- called vortices in this context -- in minimizers of the energy.

The analysis in \cite{CS} includes the computation of the first critical field of a thin shell of a surface of revolution subject to a constant vertical field which turns out to be surprisingly simple and depending only on an intrinsic quantity, in the $ \kappa\to\infty$ limit:
$$
H_{c_1}\sim\(\frac{4\pi}{\mbox{Area of }\mM}\)\ln \kappa.
$$ 
This result is extended in \cite{C}, to general surfaces and magnetic fields. For a fixed field $\bH^e$, an external magnetic field of the form $\He=h( \kappa)\bH^e=h( \kappa)\nabla\times \bA^e$ is considered. Then the first critical field is
$$
H_{c_1}\sim\frac{1}{\max_\mM *F-\min_\mM *F}\ln  \kappa,
$$
where $d^{*}F=*d*F=\bA$ and $*$ denotes the Hodge star-operator.
In fact, the study shows also that, somewhat remarkably, not all fields $\bH^e$ give rise to a first critical field. This phenomenon is related to the geometry and relative location of $\mM$ with respect to $\bH^e.$ For $\bH^e$ that yield a finite $H_{c_1},$ the topological obstruction imposed by $\mM$ implying that the total degree of $\frac{\psi}{\abs{\psi}}$ is zero is used in \cite {C} to show that there is an even number of vortices in minimizers of $\GM,$ half with positive degree, half with negative degree concentrating respectively on the set where $*F$ achieves its minimum and maximum.  The optimal number $2n$ and location of vortices and anti-vortices in $\mM$ is established in \cite{C} for values of $h(k)$ slightly above $H_{c_1}$ and in addition it is shown that if the minimum and maximum of $*F$ is attained at finitely many points then the two sets of vortices  minimize, independently, a renormalized energy.

The results in \cite{C} and \cite{CS} cover only a moderate regime; in these works the intensity of the applied  field  is $H_{c_1} +\mathcal{O}(\ln\ln \kappa )$ and thus the number of vortices remains bounded as $\kappa$ goes to infinity.

Once the value of $h$ becomes much larger than $H_{c_1},$ that is there is a constant $C>0$ such that $h-H_{c_1}\geq C\ln\kappa,$ then the number of vortices in minimizers diverges as $\kappa\to\infty$. For even larger $h$,  superconductivity persists  only in a narrow region in the sample.

In the case of an infinite cylinder whose cross section is a domain $\Omega\subseteq\R^2$ and for constant applied fields parallel to the axis of the cylinder a reduction to a two-dimensional problem is possible.
In this case it is known that as the intensity increases superconductivity is lost  in the bulk and only a thin superconductivity region near $\partial\Omega$ persists (see Chapter 7 in \cite{SS2}).
For much higher values still, superconductivity is completely lost: this value is known as $H_{c_3}$ and is estimated by a delicate spectral analysis of the magnetic Laplacian operator as in the monograph \cite{fournaishelffer10}.

In our setting, corresponding to the above functional $\GM$ \eqref{GLS} on the compact surface $\mathcal M$, there is no boundary, so what happens to the superconductivity region is not obvious. Another  crucial difference lies in the behaviour of the (normalized) magnetic field $H$ induced on $\mathcal M$, which is the normal component of $\mathbf H^e$, or equivalently $H\, d\mathcal H^2_{\mathcal M}=d\mathbf A$ (viewing $\mathbf A$ as a 1-form). Namely, in our case, $H$ vanishes and changes sign. The spectral analysis in \cite{montgomery95} therefore suggests that superconductivity should persist near the set $\lbrace H=0\rbrace$, where the external magnetic field is tangent to the surface $\mathcal M$. In \cite{pankwek02} the authors study the case of a vanishing magnetic field in the infinite cylinder model, and observe indeed nucleation of superconductivity near the zero locus of the magnetic field, for very high values of the applied field (near the putative $H_{c_3}$) under the condition that the gradient of the magnetic field does not vanish on its zero locus. The problem of the determination of the upper critical field for vanishing fields remains largely open otherwise.  Here, we are concerned with much lower values of the applied field: a main motivation of this work is to understand the transition from the vortexless to normal state regimes.

Another interesting difference is the fact that in the infinite cylinder model only positive  vortices exist  and so the location and growth of the vortex region is always ruled by the competing effects of  mutual repulsion, and confinement provided by the external field. In the present setting, this is no longer the case. Vortices of positive and negative degree must coexist and so repulsion and attraction are common features  of the relative placement of vortices in $\mathcal{M}$, this without taking into account the external field.

 In this way,  the shrinking of the superconductivity region is a multifaceted phenomenon. Moreover, the problems mentioned in the characterization of this region are present even in the most emblematic case of a constant external field $\mathbf H^e$: the region of persistence of superconductivity does not only depend on the field and on the topology of $\mM,$ but also on extrinsic geometric properties of the surface; the relative position of $\mathcal{M}$ with respect to $\mathbf H^e$ affects $H$ and therefore the zero locus of the induced field.

In the present work we address the question of identifying the region where superconductivity persists in the $\kappa\to\infty$ limit, when 
$$\frac{H_{c1}}{h}$$ is small; we show that as this quantity gets small superconductivity persists in a small neighborhood of the place where the applied field is tangential to the sample, provided the field satisfies a generic non-degeneracy condition(see \eqref{Hnondegen} below). 
Another thrust of this work is aimed at uncovering some new intermediate regimes only present in this setting, when the normal component of the external field changes sign multiples times. In the model problem of a surface of revolution and constant vertical field, we  identify several structural transitions undergone by the superconductivity region. Furthermore, we observe a new phenomenon which we refer to as {\it freezing of the boundary}, where a component of the vortex region stops growing even after increasing the intensity of the external field. This phenomenon holds in great generality (not only in the surface of revolution case), as is shown at the end of section \ref{s_interm}.

To carry out our analysis  we start by using a reduction to a mean field model, first derived rigorously in \cite{SS1}. More precisely, if we write a critical point $\psi$ of $\GM$ in polar form $\psi=\rho e^{i\phi},$ variations of the phase yield $d(\rho^2(d\phi-hd^{*}F))=0,$ and because $H_{dr}^1(\mM)=0$ this implies there is a $V$ such that $*dV=\rho^2(d\phi-hd^{*}F).$
Taking $V=hW$, the function $W$ is expected to minimize
\begin{equation}\label{mf}
\int_\mM\abs{\nabla_\mM W}^2 d\mH_\mM^2
+\frac{\ln  \kappa}{h}\int_\mM\abs{-\Delta_\mM W+\Delta_\mM *F}d\mH_\mM^2.
\end{equation}

The details of this mean field reduction can be found in \cite{SS1} in the case of a positive external field applied in a bounded planar domain. However, the analysis in \cite{SS1} does not handle the additional restriction of total zero mass which affects the construction of an upper bound in this setting.  The steps needed to extend the proof to the present case are included in Appendix~\ref{a_meanfield}.

The measure $-\Delta_{\mathcal{M}} V +\Delta_{\mathcal{M}} *F$ can be interpreted as the normalized measure generated by the vortices. 
On the other hand,
we observe that  
\begin{equation*}
\Delta_{\mathcal{M}} *F \, d\mathcal H^2_{\mathcal M} = d*d*F =d\mathbf A= Hd\mathcal H^2_{\mathcal M},
\end{equation*}
where the function $H$ is the normal component of the external magnetic field $\mathbf H^e$ relative to $\mathcal{M}$. In what follows we refer to $H$ simply as \textit{the} magnetic field, and we assume that $H\in C^1(\mathcal M)$. Moreover, we drop the explicit dependence on $\mathcal{M}$ in expressions like $\Delta_{\mathcal M}$, $\nabla_{\mathcal M}$.

Before we  state our main result we make the following assumption: there exists $\beta>0$ such that
\begin{equation} \label{defbeta}\lim_{\kappa\to \infty }\frac{\ln \kappa}{h}=\beta.\end{equation}
 Once the connection to the mean field problem \eqref{mf} is established we proceed to locate very precisely the region of persistence of superconductivity, that is, the region $SC_\beta$ where the vorticity measure $-\Delta V + H$ vanishes. We find that this region corresponds to a $\beta^{\frac{1}{3}}$ neighborhood of the set where $H$ vanishes, in the $\beta\to 0$ limit. More precisely,

\begin{theorem}\label{simplifiedmain} Under the nondegeneracy assumption that $\nabla H$ is nowhere vanishing on $\{H=0\}, $ there exists $C>0$ independent of $\beta$ such that the superconductivity region $SC_\beta$ is contained in $\{x\in \mathcal{M}: d(x, \{H=0\})<C\beta^{\frac{1}{3}}\}$, and contains  $\{x\in \mathcal{M}: d(x, \{H=0\})<C^{-1}\beta^{\frac{1}{3}}\}$,
for $\beta$ sufficiently small.
\end{theorem}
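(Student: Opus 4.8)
The plan is to reduce the statement to a pointwise analysis of the two-sided obstacle problem solved by the minimizer $W$ of \eqref{mf}. First I would recall that, after rescaling $V=hW$ and writing $\beta=\ln\kappa/h$, the Euler--Lagrange condition for \eqref{mf} identifies the superconductivity region $SC_\beta$ with the \emph{coincidence set} $\{-\Delta V + H = 0\}$, equivalently $\{-\Delta W + \Delta {}*F = 0\}$, i.e.\ the set where the obstacle constraint $|{-\Delta_\mathcal{M} W + \Delta_\mathcal{M} {}*F}|\le$ (penalization threshold) is \emph{not} active in a way that forces vorticity. The key reformulation is that $V$ solves a two-sided obstacle problem whose obstacles are (a multiple of) $\pm\beta$ away from the solution of $-\Delta u = H$, so that $SC_\beta$ is where $|\nabla V|$ is controlled and the equation $-\Delta V = H$ holds. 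I would then split the argument into an upper inclusion (superconductivity cannot survive far from $\{H=0\}$) and a lower inclusion (it does survive close to $\{H=0\}$).

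For the \textbf{upper inclusion}, I would argue by a barrier/comparison principle near a point $x_0$ with $d(x_0,\{H=0\})=\delta$. On a geodesic ball $B_r(x_0)$ with $r\sim\delta$ on which $H$ has a definite sign (say $H\ge c\,\delta$ by the nondegeneracy of $\nabla H$ on $\{H=0\}$ and a Taylor expansion), if $x_0\in SC_\beta$ then $-\Delta V = H \ge c\delta$ on that ball, while the obstacle constraint pins $|V - u_H|\lesssim \beta$ where $-\Delta u_H = H$. Testing the equation against a fixed bump, or comparing $V$ with an explicit subsolution of $-\Delta w = c\delta$ that is forced to detach from the obstacle, yields a contradiction unless $\delta^{3}\lesssim \beta$, i.e.\ $\delta\lesssim\beta^{1/3}$. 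The cubic power is exactly what one expects: the obstacle gap is $O(\beta)$, the source is $O(\delta)$ at distance $\delta$, and balancing $\delta \cdot r^2$ against $\beta$ with $r\sim\delta$ gives $\delta^3\sim\beta$.

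For the \textbf{lower inclusion}, I would construct an explicit competitor $\tilde V$ that coincides with the solution of $-\Delta \tilde V = H$ on a tube of radius $C^{-1}\beta^{1/3}$ around $\{H=0\}$ and transitions to the obstacle outside, verifying that $\tilde V$ stays between the two obstacles there — this is where the smallness of $\beta$ and the scaling $r^3\sim\beta$ are used — and then invoke uniqueness/comparison for the obstacle problem to conclude that the true coincidence set contains this tube. Concretely, near $\{H=0\}$ one uses Fermi coordinates $(s,t)$ with $t$ the signed distance, so that $H\approx (\partial_t H)\,t$, solves $-\partial_{tt}\tilde V \approx (\partial_t H) t$ giving $\tilde V$ cubic in $t$, and checks the obstacle inequalities $|\tilde V - u_H|\le c\beta$ hold precisely for $|t|\lesssim \beta^{1/3}$.

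The main obstacle I anticipate is making the two inclusions quantitatively sharp with a \emph{single} constant $C$ uniform in $\beta$, while controlling the geometry of $\{H=0\}$ (a finite union of $C^1$ curves, by nondegeneracy) through Fermi coordinates whose validity radius must be shown to dominate $\beta^{1/3}$; one also has to handle the global coupling in the obstacle problem — detachment from the obstacle is a nonlocal event, so the local barrier arguments must be upgraded to genuine sub/supersolution comparisons on the whole surface, or localized carefully using the sign of $H$ and the boundary behavior on $\partial(SC_\beta)$. A secondary technical point is the low regularity of $H$ (only $C^1$), which limits the Taylor expansion of the source term to first order and forces the barriers to be built from linear-in-$t$ data rather than smoother profiles; I expect this is enough since only the leading cubic behavior of $V$ matters.
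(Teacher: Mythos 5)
Your scaling heuristics are on target, and several ingredients you sketch (Fermi coordinates near $\{H=0\}$, first-order Taylor expansion of $H$, a cubic-in-distance profile, the balance $\delta^3\sim\beta$) all appear in the paper. But there is a genuine gap in both of your two inclusions, and the fix is a single idea that your plan does not contain.

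For the lower inclusion, your strategy is to construct an admissible competitor $\tilde V$ (satisfying $|\tilde V|\le\beta/2$, and $-\Delta\tilde V=H$ on a $\beta^{1/3}$-tube around $\{H=0\}$) and then ``invoke uniqueness/comparison for the obstacle problem.'' This does not work: admissibility of a competitor says nothing about the coincidence set of the actual minimizer. To deduce information about $\{|V|<\beta/2\}$ you need $\tilde V$ to be an \emph{ordered comparison solution}, i.e.\ the exact solution of an obstacle problem whose data dominates or is dominated by $H$. This is precisely what the paper does: it builds $W^{2,\infty}$ functions $V_1,V_2$ that solve the two-sided obstacle problem \eqref{obstacleH} for modified fields $H_1\le H\le H_2$ (with $H_j$ \emph{defined} as $\Delta V_j$ inside the tube, and the cubic coefficients chosen so that the pointwise bounds $H_1\le H\le H_2$ hold thanks to \eqref{Hz}), and then invokes the comparison principle for two-sided obstacle problems with ordered data (Lemma~\ref{lem_comparison}, after Dal Maso--Mosco--Vivaldi) to get $V_1\ge V\ge V_2$. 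Your upper-inclusion argument has a related flaw: if $x_0\in SC_\beta$, the ball $B_{c\delta}(x_0)$ need not lie inside $SC_\beta$, so $-\Delta V=H$ is not known to hold on the whole ball, and the ``detach from the obstacle'' contradiction is not set up. (For the outer bound alone there \emph{is} a quick argument, noted in Remark~\ref{rem_outer+thick}: from $|V|\le\beta$, $|\Delta V|\le C\beta^{1/3}$ and the interpolation inequality $\|\nabla V\|_\infty^2\le C\|\Delta V\|_\infty\|V\|_\infty$ one derives the Lipschitz bound $|\nabla V|\le C\beta^{2/3}$, giving the thickness estimate; but this does not give the inner inclusion.)

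Two further points your plan misses. First, the two barriers cannot be used independently to obtain the outer and inner inclusions: from $V_1\ge V\ge V_2$ one only gets $\{V_1<\beta/2\}\cap\{V_2>-\beta/2\}\subset SC_\beta\subset\{V_2<\beta/2\}\cap\{V_1>-\beta/2\}$, so each inclusion in Theorem~\ref{simplifiedmain} uses \emph{both} $V_1$ and $V_2$ — this is what the paper means when it says the barriers must be used together. Second, because the comparison functions have to be genuine solutions of modified problems on \emph{all} of $\mathcal M$, the local constructions must be pasted near \emph{every} component $\Gamma$ of $\{H=0\}$ (and glued to $\pm\beta/2$ away from $\{H=0\}$ as in \eqref{V1away}); skipping one curve would produce data for which the constructed function is not a solution of an obstacle problem comparable to the original. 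You do flag the nonlocality issue at the end, but resolving it is exactly the content of the paper's proof, not a secondary technical point.
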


The nondegeneracy assumption on $H$ implies that the set $\lbrace H=0\rbrace$ is a finite union of smooth closed curves. It is the same assumption as the one made in \cite{montgomery95,pankwek02} for the study of the third critical field $H_{c_3}$.

To prove Theorem \ref{simplifiedmain} we reformulate the mean field approximation as an obstacle problem, and construct comparison functions. We note that a construction in the same spirit was carried out in \cite[Appendix~A]{VorLatt} for the planar Ginzburg-Landau model in a different context. In our case however the construction is not immediate, because our obstacle problem is two-sided and our magnetic field $H$ changes sign. Indeed, our proof makes use of a comparison principle for two sided obstacle problems proved in \cite{dalmasomoscovivaldi} which allows to compare solutions to obstacle problems corresponding to different data $H$. Hence the comparison functions will not be merely ``super- or sub-solutions'' of our problem, but actual solutions of modified problems. In particular they have to be quite regular. As a consequence, we cannot use functions of the  distance to $\lbrace H=0\rbrace$ as comparison functions. We have to use a particular coordinate system near each component of $\{H=0\}$ and explicitly build local functions satisfying local obstacle problems with appropriate modifications of $H$. Pasting these constructions we are able to  appeal to \cite{dalmasomoscovivaldi} to obtain the desired estimates. In so doing we note a key feature of the proof, related to the fact that the obstacle problem is two-sided: the barriers thus obtained cannot be used independently to get neither the inner nor the outer bound separately, but together they yield the conclusion of the theorem. This is explained in more detail in section \ref{s_smallbeta}.

Thanks to Theorem~\ref{simplifiedmain}, we have  a clear picture of the superconductivity region for $\beta\to 0$: it is a union of tubular neigborhoods of the connected components of $\lbrace H=0\rbrace$. In particular, the superconductivity region has at least as many connected components as $\lbrace H=0\rbrace$. On the other hand, we also have a clear picture of the superconductivity region as $\beta\to\beta_c$, where positive (resp. negative) vortices are concentrated near the points where $*F$ achieves its maximum (resp. minimum). In particular, the superconductivity region has, generically, one connected component. In the last part of this work, we investigate the intermediate regimes. If $\lbrace H=0\rbrace$ has more than one connected component, transitions \textit{have} to occur: when $\beta$ crosses some critical value, the number of connected components of $SC_\beta$ changes. 

Studying such transitions, and determining the values of $\beta$ at which they occur, seems out of our reach in all generality. That is why we concentrate first on a simple model problem. We consider a surface of revolution around the vertical axis $\mathbf{e_z}$, and assume that the external magnetic field $\mathbf H^e = \mathbf{e_z}$ is vertical and constant. (In fact in Section~\ref{ss_symcase}, more general magnetic fields are considered.) In that case, the induced field $H$ on $\mathcal M$ is just $H=\mathbf{e_z}\cdot \nu$, where $\nu$ is an outward normal vector on $\mathcal M$. The set $\lbrace H=0\rbrace$ consists exactly of the points where $\mathbf{e_z}$ is tangent to $\mathcal M$, and it is a union of circles. Note that $H$ has to change sign an odd number of times, since $H=-1$ at the `south pole' and $+1$ at the `north pole', thus there are an odd number of those circles. As explained above, interesting transitions happen when $\lbrace H=0\rbrace$ has more than one connected component. Therefore we focus on the simplest non-trivial situation, which corresponds to $\lbrace H=0\rbrace$ consisting of three circles.
We state loosely here the result that we obtain for that simple model problem in Section~\ref{ss_symcase} (see Figure~\ref{figreg}).

\begin{prop}\label{loose1d}
There exists $\beta_c>\beta_1^* \geq \beta_2^* >0$ such that
\begin{itemize}
\item for $\beta\in (\beta^*_1,\beta_c)$, $SC_\beta$ has one connected component,
\item for $\beta\in (\beta^*_2,\beta^*_1)$, $SC_\beta$ has two connected components,
\item for $\beta\in (0,\beta^*_2)$, $SC_\beta$ has three connected components.
\end{itemize}
Moreover, for $\beta\in(\beta^*_2,\beta^*_1)$, one connected component of $SC_\beta$ remains constant.
\end{prop}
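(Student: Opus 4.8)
The plan is to reduce the whole problem to a one-dimensional weighted obstacle problem and then read off the topology of $SC_\beta$ from an explicit ODE.

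\emph{Step 1: reduction and explicit local solutions.} The functional \eqref{mf} is strictly convex in $\Delta W$, so the minimizing vorticity $-\Delta W+H$ is unique and hence axisymmetric; accordingly $W$ may be taken to depend only on the meridian arclength $s\in[0,L]$, with $r(s)$ the distance to the axis. Rewriting \eqref{mf} as a two-sided obstacle problem, as in the proof of Theorem~\ref{simplifiedmain}, $W$ solves $(rW')'=Hr$ on $SC_\beta$ and on each complementary \emph{vortex interval} is constant, equal to one of two levels $W_\pm$ with $W_--W_+=\beta$; on such an interval the vorticity equals $H$, so $W=W_-$ where $H<0$ (negative vortices) and $W=W_+$ where $H>0$ (positive vortices). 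Since $\mathbf H^e=\mathbf e_z$ is constant and vertical, $H=\pm r'$, so $(rW')'=\pm\tfrac12(r^2)'$ integrates: $C^1$ matching at the free boundary of a component $(a,b)$ of $SC_\beta$ forces $r(a)=r(b)$ and $W'(s)=\pm(r(s)^2-r(a)^2)/(2r(s))$, hence $W(b)-W(a)=\pm\int_a^b (r^2-r(a)^2)/(2r)\,ds$. The crucial dichotomy: this integral equals $\pm\beta$ if the two vortex intervals flanking $(a,b)$ have \emph{opposite} sign, but equals $0$ if they have the \emph{same} sign.

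\emph{Step 2: monotonicity and the first transition.} When $\lbrace H=0\rbrace$ is three circles, $r$ has on $(0,L)$ exactly three nondegenerate critical points $\sigma_1<\sigma_2<\sigma_3$, with $\sigma_1,\sigma_3$ maxima and $\sigma_2$ a minimum, and $H=\pm r'$ alternates in sign along the four arcs they cut out. For $\beta$ small, Theorem~\ref{simplifiedmain} together with the one-dimensional structure gives that $SC_\beta$ has exactly three components $I_j(\beta)\ni\sigma_j$, so $\beta_2^*>0$. By the comparison principle for two-sided obstacle problems \cite{dalmasomoscovivaldi}, $SC_\beta$ is nondecreasing in $\beta$; moreover, parametrizing a component by $t=r(a)=r(b)$, the function $g(t):=\int_a^b(r^2-t^2)/(2r)\,ds$ satisfies $g'(t)=-t\int_a^b ds/r<0$, which forces strict monotonicity of the free boundaries, so each $I_j$ grows with $\beta$. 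Two adjacent components merge exactly when the vortex interval between them closes up; let $\beta_2^*$ be the first value of $\beta$ at which this happens (necessarily a merge of $I_1$ with $I_2$ or of $I_2$ with $I_3$, since $I_1$ and $I_3$ are not adjacent). Thus $SC_\beta$ has three components for $\beta<\beta_2^*$ and two just above $\beta_2^*$.

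\emph{Step 3: freezing, the second transition, and $\beta_c$.} Suppose the first merge joins $I_1$ and $I_2$ (the other case is symmetric). The resulting component $\tilde I\ni\sigma_1,\sigma_2$ is then flanked on \emph{both} sides by vortex intervals of the \emph{same} sign — those carried by the first and third arcs — so by Step~1 $W$ takes the same value at both endpoints of $\tilde I=(a,b)$, which forces the two $\beta$-independent equations $r(a)=r(b)$ and $g(t)=0$. The merge at $\beta_2^*$ produces such a pair $(a,b)$, and since $g$ is strictly decreasing it is the unique one; hence $\tilde I$ cannot change with $\beta$ as long as its (same-sign) flank structure persists — the freezing. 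Meanwhile $I_3$ retains opposite-sign flanks, hence keeps growing, and the vortex interval separating $\tilde I$ from $I_3$ — one endpoint frozen, one moving — closes at a well-defined $\beta_1^*>\beta_2^*$, past which $SC_\beta$ is connected; so $SC_\beta$ has exactly two components, one of them constant, for $\beta\in(\beta_2^*,\beta_1^*)$. Finally $\beta_c:=\max_\mM *F-\min_\mM *F=\int_0^L r/2\,ds$ is finite, and by \cite{C,CS} is the threshold above which $SC_\beta=\mathcal M$; just below $\beta_c$ only two small vortex caps near the extrema of $*F$ survive, so $SC_\beta$ is connected, which yields $\beta_1^*<\beta_c$ and one component throughout $(\beta_1^*,\beta_c)$.

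\emph{Main obstacle.} The genuinely delicate point is the structure asserted in Step~2 for \emph{all} $\beta<\beta_c$, not only for small $\beta$: that the coincidence set of the two-sided obstacle problem is exactly the union of the four alternating-sign vortex intervals above, that no spurious vortex interval nucleates inside a free component, and that the topology of $SC_\beta$ changes only through the two merges. This is where the comparison principle of \cite{dalmasomoscovivaldi} is indispensable — ordinary sub/supersolutions do not suffice, as $H$ changes sign — used together with the explicit local solutions of Step~1 to pin down the free boundaries. Finally, exhibiting surfaces with $\beta_2^*<\beta_1^*$ strictly, so that the freezing interval is nonempty — e.g.\ taking $r$ with $r(\sigma_1)\neq r(\sigma_3)$, so that the two possible first merges happen at different values of $\beta$ — is the quantitative work carried out in Section~\ref{ss_symcase}.
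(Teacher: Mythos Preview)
Your approach is genuinely different from the paper's. The paper does not argue dynamically. Instead, in Proposition~\ref{prop_regimes1d} it defines $\beta_1^*,\beta_2^*$ \emph{a priori} via the integrals $I_\pm,J$ of~\eqref{defIJ} and the critical level $\alpha^*$ of Lemma~\ref{critalpha}, and then for each $\beta$ in each of the three regimes it writes down an explicit piecewise function $v(\phi)$, checks by hand that it satisfies the one-dimensional free boundary problem~\eqref{1dfreebound}, and invokes uniqueness (Remark~\ref{rem_obstacle}). The topology of $SC_\beta$ is then simply read off the formula for $v$. There is no tracking of free boundaries as $\beta$ varies, no appeal to monotonicity, and no use of the comparison principle from \cite{dalmasomoscovivaldi} in this section.

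Your dichotomy in Step~1 --- that a free component flanked by same-sign vortex intervals satisfies two $\beta$-independent constraints, hence freezes --- is correct and is precisely the mechanism behind the paper's Case~2 construction (where the frozen interval $(\phi_-^*,\psi_-^*)$ is determined by the level $\alpha^*$, which is fixed by $I_-(\alpha^*)=J(\alpha^*)$ independently of $\beta$). The paper does not articulate it this way, so your formulation is a nice conceptual complement.

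The gap you flag in your final paragraph is real and is exactly why the paper takes the static route. Your argument needs, for every $\beta\in(0,\beta_c)$, that the coincidence set has the expected alternating structure and that no spurious contact interval nucleates inside a free component. Monotonicity (Proposition~\ref{prop_monot}) rules out splitting but not nucleation, and the comparison principle of \cite{dalmasomoscovivaldi} does not by itself deliver this structural information --- you would end up having to construct the solution to verify it, which is what the paper does directly. Two smaller points: your appeal to \cite{C,CS} for the one-component structure just below $\beta_c$ is misplaced (those references concern $H_{c_1}$, not the obstacle problem near $\beta_c$; the right argument is simply that $*F$ is monotone in $\phi$, so for $\beta$ slightly below $\beta_c$ the contact sets are small intervals at the poles); and your formula $\beta_c=\int_0^L r/2\,ds$ is not correct --- $\beta_c=\max *F-\min *F=\int_0^\pi a\gamma\rho^{-1}\,d\phi$, which in arclength is $\int_0^L a/r\,ds$, not $\int r/2$.
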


\begin{figure}[ht]
\begin{center}
\begin{subfigure}{.32\textwidth}
\includegraphics[width=\textwidth]{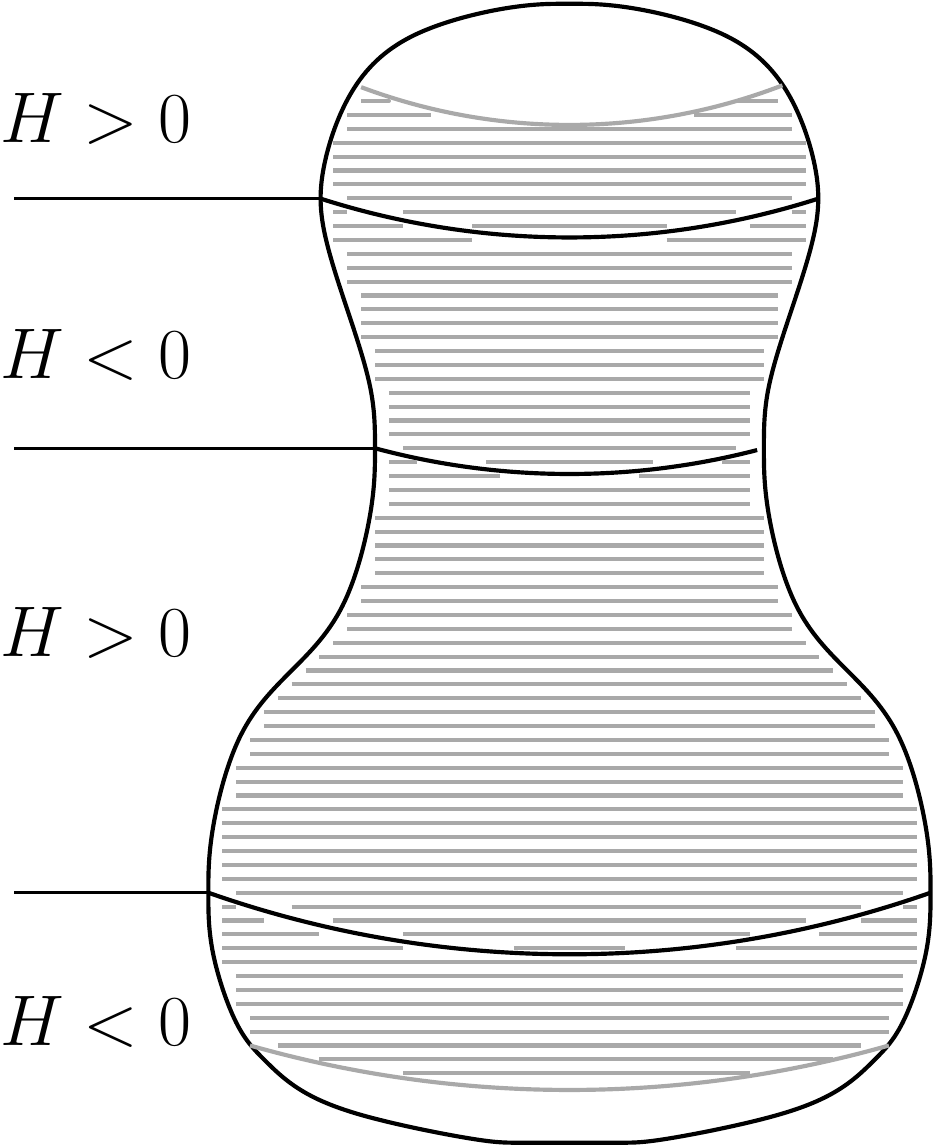}
\caption{$\beta\in (\beta_1^*,\beta_c)$}
\label{figreg1}
\end{subfigure}
\hspace{.05\textwidth}
\begin{subfigure}{.24\textwidth}
\includegraphics[width=\textwidth]{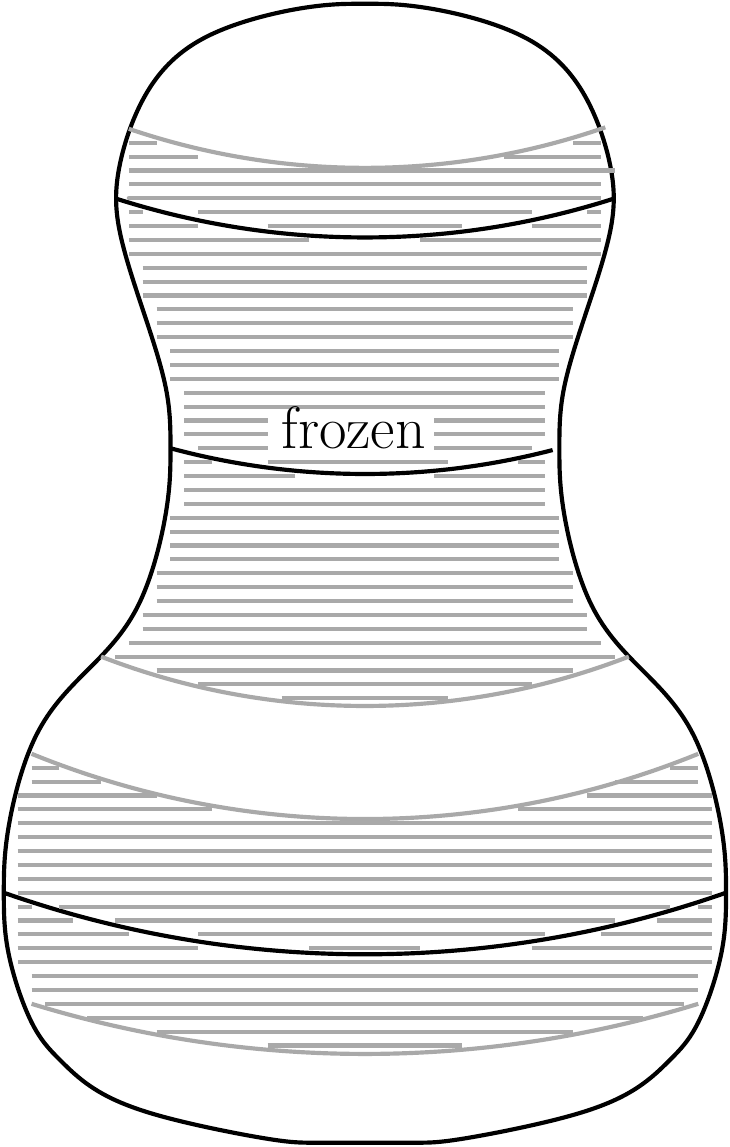}
\caption{$\beta\in (\beta_2^*,\beta_1^*)$}
\label{figreg2}
\end{subfigure}
\hspace{.05\textwidth}
\begin{subfigure}{.24\textwidth}
\includegraphics[width=\textwidth]{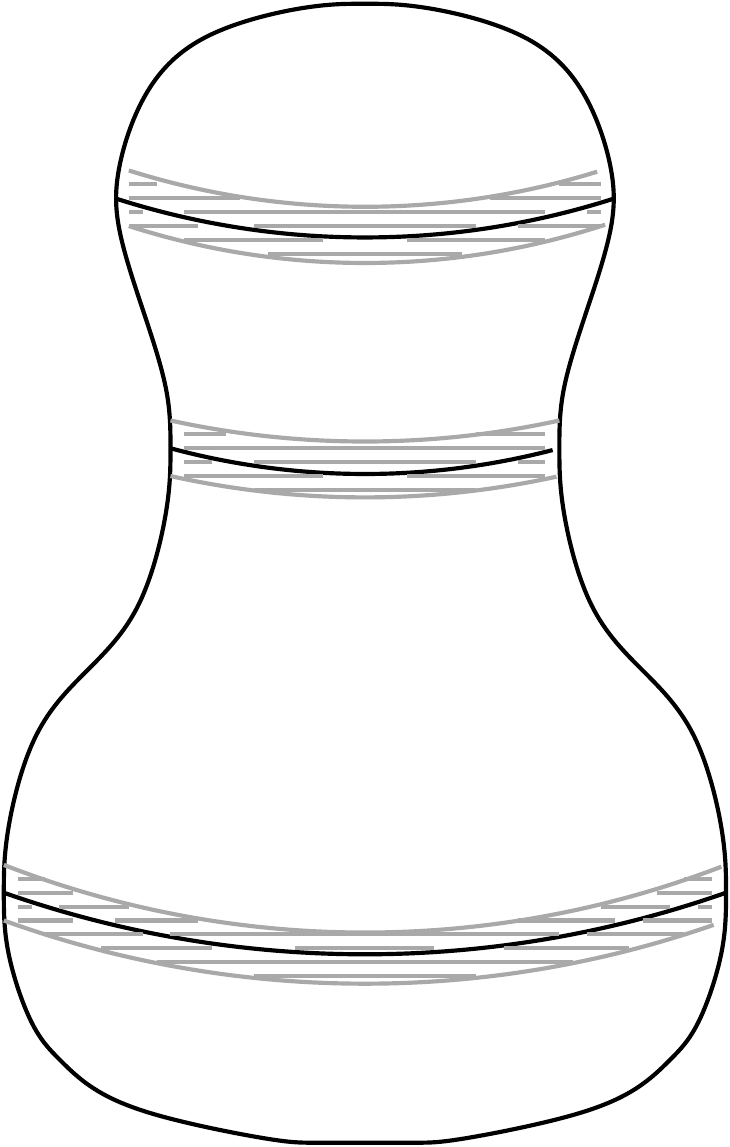}
\caption{$\beta\in (0,\beta_2^*)$}
\label{figreg3}
\end{subfigure}
\caption{The region $SC_\beta$ in the three regimes of Proposition~\ref{loose1d}}
\label{figreg}
\end{center}
\end{figure}

The most striking part of Proposition~\ref{loose1d} is the appearance of an intermediate regime in which one connected component of $SC_\beta$ remains constant: one part of the free boundary is \textit{frozen}. In Section~\ref{ss_freez} we identify the features responsible for such `freezing' of the boundary and prove a similar `freezing property' in a general (non-symmetric) setting (see Proposition~\ref{prop_freez}).

An other interesting outcome of the precise version of Proposition~\ref{loose1d} (Proposition~\ref{prop_regimes1d} in Section~\ref{ss_symcase}) are the expressions of the critical values $\beta^*_1$ and $\beta^*_2$, in terms of integral quantities involving $\mathbf A$ and the parametrization of $\mathcal M$. Transfering these conditions to a general non-symmetric setting seems far from obvious and constitutes an interesting challenge.

The plan of the paper is as follows.
In the next section we collect some basic properties of solutions to an obstacle problem that serves as the starting point in our analysis. In section \ref{s_smallbeta} we identify the thin region of superconductivity when $\beta$ is small. In section \ref{s_interm} we turn to the symmetric situation and identify in Proposition~\ref{prop_regimes1d} the further transitions as $\beta$ decreases to zero from $\beta_c=\max(*F)-\min(*F).$ We also prove the `freezing of the boundary' property at the end of  section \ref{s_interm}. 

\subsubsection*{Acknowledgements}

A.C. is
supported by the Fields postdoctoral fellowship. He wishes to thank
Robert L. Jerrard for his incredible support. X.L. is supported by a Labex Milyon doctoral mobility scholarship for his stay at McMaster University. He wishes to thank his Ph.D. advisor Petru Mironescu and his McMaster supervisors Stanley Alama and Lia Bronsard for their great support.

\section{The obstacle problem}\label{s_obstacle}
This preamble is devoted to the derivation of the obstacle problem dual to the mean field approximation. We also prove some basic results we will need later on. We think it is worthwhile recording these properties because in our setting, even in the by now classical application of the duality theorem which allows for the obstacle problem formulation, there is an inherent degeneracy we have to account for which is not present in other similar results in the literature. 

In the first part of this section we show that -- as in \cite[Chapter~7]{SS2} -- the minimizer of 
\begin{equation}\label{Ebeta}
E_\beta (V)=\int_{\mathcal M} \abs{\nabla V}^2  +\beta 
 \int_{\mathcal M} \abs{ -\Delta V + H} 
\end{equation}
 is the solution of an obstacle problem, and then we study general properties of the contact set. There are two main differences with the obstacle problem arising in \cite[Chapter~7]{SS2}.
 \begin{itemize}
 \item In our case there are no boundary conditions and the minimizer is well-defined only up to a constant. We need to deal with this degeneracy.
 \item While in \cite[Chapter~7]{SS2} the obstacle problem is one-sided, we have to consider a two-sided obstacle problem. This is due to the fact that, in our case, the magnetic field $H$ changes sign.
 \end{itemize}
 
The functional $E_\beta$ is, under assumption \eqref{defbeta}, the limit of the sequence of energies considered in \eqref{mf}. The link between $E_\beta$ and the superconductivity region is, as mentioned in the introduction, proved in appendix A.

\subsection{Derivation of the obstacle problem}\label{ss_derivobstacle}

\begin{prop}\label{prop_obstacle}
Let $\beta>0$. A function $V_0\in H^1(\mathcal M)$ minimizes $E_\beta$ \eqref{Ebeta} if and only if $V_0$ minimizes
\begin{equation}\label{F(V)}
\mathcal F (V) = \int_{\mathcal M}\!\left(\abs{\nabla V}^2 + 2  HV\right)
\end{equation}
among all $V\in H^1(\mathcal M)$ such that $(\esssup V-\essinf V)\leq \beta$.
\end{prop}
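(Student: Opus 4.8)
The plan is to recognize this as the classical duality between a constrained minimization problem and an obstacle-type problem, adapted to the two-sided, boundary-free setting. First I would introduce the measure $\mu = -\Delta V + H$ (interpreted in the sense of distributions, with $\int_{\mathcal M}\mu = \int_{\mathcal M} H = 0$ by the divergence theorem, since $H\,d\mathcal H^2 = d\mathbf A$) and rewrite $E_\beta(V) = \int_{\mathcal M}|\nabla V|^2 + \beta\,\|\mu\|_{\mathrm{TV}}$. The key algebraic identity is that for $V, W \in H^1(\mathcal M)$ one has $\int_{\mathcal M}\nabla V\cdot\nabla W = \int_{\mathcal M}(-\Delta V + H)W - \int_{\mathcal M} HW$, so testing against a competitor isolates a linear term. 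Completing the square / polarization will convert $\int|\nabla V|^2$ plus a penalization into the functional $\mathcal F(V) = \int|\nabla V|^2 + 2HV$ with the sup-inf constraint playing the role dual to the total-variation penalization.

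The cleanest route I would take is the direct one, avoiding abstract convex duality machinery: show the two minimization problems have minimizers related by a Legendre-type correspondence in the scalar parameter. Concretely, suppose $V_0$ minimizes $E_\beta$. For any $W\in H^1(\mathcal M)$ with $\mathrm{ess\,sup}\,W - \mathrm{ess\,inf}\,W \le \beta$, I want to show $\mathcal F(V_0)\le\mathcal F(W)$. The idea is that since $W$ has oscillation at most $\beta$, one can use $W$ (up to an additive constant, which is free since both functionals and the constraint are translation-invariant in the target) to build a competitor for $E_\beta$, or conversely test the Euler--Lagrange relation for $E_\beta$ against $W - V_0$. The Euler--Lagrange condition for $E_\beta$ should read: there is a measure $\mu_0 = -\Delta V_0 + H$ with $\|\mu_0\|_{\mathrm{TV}}$ controlling things, and $2\int\nabla V_0\cdot\nabla(\text{variation}) + \beta\,\delta\|\mu\|_{\mathrm{TV}} = 0$; the subdifferential of total variation gives a function $g$ with $|g|\le 1$, $g = \mathrm{sgn}$ on the support of $\mu_0$, such that $-2\Delta V_0 = \beta\,(\text{something involving }g)$ — but more usefully, the inequality formulation: for all $\phi$, $2\int\nabla V_0\cdot\nabla\phi \le \beta(\|\mu_0 + \Delta\phi\|_{\mathrm{TV}} - \|\mu_0\|_{\mathrm{TV}})$, wait, I should be careful — the sign structure is that $V_0$ minimizes, so the directional derivative is $\ge 0$, giving $2\int\nabla V_0\cdot\nabla\phi + \beta\,(\text{one-sided derivative of TV in direction }-\Delta\phi) \ge 0$.

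The main obstacle I anticipate is handling the degeneracy from the absence of boundary conditions — the minimizer of $E_\beta$ being defined only up to an additive constant, and the constraint set $\{\mathrm{ess\,sup}\,V - \mathrm{ess\,inf}\,V\le\beta\}$ being unbounded in exactly that direction — so one must quotient by constants or normalize (e.g. fix $\int_{\mathcal M} V = 0$) consistently on both sides, and check that coercivity of $\mathcal F$ on the constraint set still holds (the linear term $\int 2HV$ is controlled because $\int H = 0$ forces it to see only the oscillation of $V$, which is bounded by $\beta$). The second delicate point is that the total-variation term $\int|-\Delta V + H|$ is not smooth, so the variational inequality characterizing the minimizer of $E_\beta$ must be derived via subdifferential calculus for the convex functional $V\mapsto\|-\Delta V + H\|_{\mathrm{TV}}$ on $H^1$, and one must verify the resulting Lagrange multiplier (a function $g\in L^\infty$ with $\|g\|_\infty\le 1$, playing the role of $\mathrm{sgn}(-\Delta V_0 + H)$ where that measure is nonzero) pairs correctly. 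I would then close the argument by showing: from $V_0$ minimizing $E_\beta$ one extracts the multiplier and the effective constraint level $\beta' = $ the relevant Lipschitz constant of the dual variable, checks $\beta'\le\beta$ (with equality unless $V_0$ already has $-\Delta V_0 + H \equiv 0$, in which case it trivially minimizes both), and conversely that a minimizer $V_1$ of $\mathcal F$ under the oscillation constraint satisfies the Euler--Lagrange inequality which, re-packaged, is exactly the optimality condition for $E_\beta$. Strict convexity of $\int|\nabla V|^2$ modulo constants gives uniqueness modulo constants on both sides, so the two minimizers coincide.
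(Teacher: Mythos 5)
Your proposal has the right high-level picture (convex duality between the total-variation penalization in $E_\beta$ and an oscillation constraint on $V$), and you correctly flag the two genuine difficulties (degeneracy modulo constants, non-smoothness of the TV term). But as written the argument has a real gap: the step you call ``re-packaging'' is precisely the nontrivial computation, and you never do it. The claim to be established is that the dual constraint that emerges from the TV penalization is exactly $\esssup V - \essinf V \le \beta$, and your sketch merely asserts this. The paper proves it by working in the quotient Hilbert space $\dot H^1(\mathcal M)$, invoking an abstract lemma (minimizers of $\tfrac12\|x\|^2+\varphi(x)$ and $\tfrac12\|y\|^2+\varphi^*(-y)$ coincide), and \emph{computing} the Fenchel conjugate $\varphi^*$ of $\varphi(V)=\tfrac\beta2\int|{-\Delta V+H}|$: one finds $\varphi^*(V)=-\int HV$ if the oscillation is $\le\beta$ and $+\infty$ otherwise, the finiteness/infinity dichotomy coming from testing against zero-average $L^2$ functions $P$ supported on $\{V>\beta/2\}$ and $\{V<-\beta/2\}$. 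That density/test-function argument is the heart of the proof and is absent from your plan.

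Your alternative route through Euler--Lagrange subdifferentials is in principle possible, but your sketch of it is muddled in exactly the way you half-acknowledge (``wait, I should be careful''). If you take the subdifferential of $\mu\mapsto\|\mu\|_{TV}$ in the measures/$C^0$ duality, you get a function $g$ with $|g|_\infty\le1$ and $g=\mathrm{sgn}\,\mu_0$ on $\mathrm{supp}\,\mu_0$, but then the stationarity condition $2\int\nabla V_0\cdot\nabla\phi=\beta\int g\,\Delta\phi$ only becomes useful after integrating by parts twice, which needs $g$ to have $H^1$ (or at least $W^{1,1}$) regularity --- something the bare subdifferential in $C^0/\mathcal M$-duality does not give you. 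The paper avoids this entirely because the subdifferential is taken in $\dot H^1$, where the dual variable automatically lives in $\dot H^1$; that is the reason to use the abstract lemma rather than hands-on Euler--Lagrange. Finally, you never address why a minimizer $V_0$ of $E_\beta$ itself satisfies the oscillation constraint, which is needed for the ``only if'' direction to even be a statement about the same competitor class; in the paper this comes out automatically because $\varphi^*$ is $+\infty$ off the constraint set. In short: same duality philosophy, but the key conjugate computation and the functional-analytic setup that makes it clean are missing, and those are precisely where the work is.
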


\begin{remark}\label{rem_obstacle}
Since the functional $\mathcal F(V)$ is translation invariant, $V_0$ coincides, up to a constant, with any minimizer of the two-sided obstacle problem
\begin{equation*}
\min \left\lbrace \int_{\mathcal M}\!\left(\abs{\nabla V}^2 + 2 HV\right) \colon V\in H^1(\mathcal M),\: \abs{V}\leq\beta /2 \right\rbrace.
\end{equation*}
Moreover, recalling that $H=\Delta *F$, this obstacle problem can also be rephrased as
\begin{equation}\label{obstacle*F}
\min \left\lbrace \int_{\mathcal M}\!\abs{\nabla(V-*F)}^2 \colon V\in H^1(\mathcal M),\: \abs{V}
 \leq \beta/2 \right\rbrace.
\end{equation}
The fact that minimizers coincide only up to a constant does not matter, since the physically relevant object is the vorticity measure $-\Delta V +H$. Moreover, it is easy to check that, if the obstacle problem \eqref{obstacle*F} admits a solution $V$ that `touches' the obstacles, i.e. satisfies $\max V -\min V=\beta$, then this solution is unique because any other solution differs from it by a constant, which has to be zero. On the other hand, a solution satisfying $\max V-\min V <\beta$ would have to be $V=*F+\alpha$ for some constant $\alpha$. Therefore, for $\beta\leq \max *F-\min *F$ the solution is unique.
\end{remark}

The proof of Proposition~\ref{prop_obstacle} relies on the following classical result of convex analysis (easily deduced from \cite{rockafellar66} or \cite[Theorem~1.12]{B}). 

\begin{lemma}\label{lem_dual}
Let $\mathcal H$ be a Hilbert space and $\varphi: \mathcal H\to\RR\cup\{+\infty\}$ be a convex lower semi-continuous function.
Then the minimizers of the problems 
\begin{equation*}\min_{x\in \mathcal H}\(\oh\norm{x}^2_{\mathcal H}+\vp(x)\)\quad \mbox{ and } \quad
\min_{y\in \mathcal H}\(\oh\norm{y}^2_{\mathcal H}+\vp^{*}(-y)\)
\end{equation*}
coincide, where $\vp^{*}$ denotes the Fenchel conjugate of $\varphi$, 
\begin{equation*}
\vp^{*}(y):=\sup_{z\in \mathcal H}\la y,z\ra_{\mathcal H}-\vp(z).
\end{equation*}
\end{lemma}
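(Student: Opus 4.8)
The plan is to reduce both minimization problems to their Euler--Lagrange inclusions and to observe that, by the Fenchel--Young identity, these inclusions describe the same point. First I would dispose of the degenerate case: if $\vp\equiv+\infty$ both infima equal $+\infty$ and there is nothing to prove, so assume $\vp$ is proper. A proper convex lower semi-continuous function on a Hilbert space is minorized by a continuous affine function $x\mapsto\la a,x\ra+b$; hence $x\mapsto\oh\norm{x}^2+\vp(x)$ is proper, strictly convex (because $\oh\norm{\cdot}^2$ is), lower semi-continuous and coercive, so it admits a unique minimizer $\bar x$. The conjugate $\vp^*$ is convex and lower semi-continuous as a supremum of continuous affine functions, and it is proper: it is nowhere $-\infty$ since $\vp$ is proper, and not identically $+\infty$ since $\vp^{**}=\vp$ by the Fenchel--Moreau theorem. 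The same existence/uniqueness argument then yields a unique minimizer $\bar y$ of $y\mapsto\oh\norm{y}^2+\vp^*(-y)$.

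Next I would write the optimality conditions. Because $\oh\norm{\cdot}^2$ is finite and continuous everywhere, the Moreau--Rockafellar sum rule gives $\partial\bigl(\oh\norm{\cdot}^2+\vp\bigr)(x)=x+\partial\vp(x)$, so $\bar x$ minimizes the first functional if and only if $-\bar x\in\partial\vp(\bar x)$. Since $y\mapsto-y$ is a linear isometry, an elementary computation gives $\partial\bigl(\vp^*(-\cdot)\bigr)(y)=-\partial\vp^*(-y)$, hence $\bar y$ minimizes the second functional if and only if $\bar y\in\partial\vp^*(-\bar y)$. The link between the two conditions is the Fenchel--Young equivalence, valid for proper convex lower semi-continuous $\vp$: for all $x,p\in\mathcal H$ one has $p\in\partial\vp(x)$ iff $\vp(x)+\vp^*(p)=\la p,x\ra$ iff $x\in\partial\vp^*(p)$. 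Applying this with $x=\bar x$ and $p=-\bar x$ converts $-\bar x\in\partial\vp(\bar x)$ into $\bar x\in\partial\vp^*(-\bar x)$, which is precisely the optimality condition characterizing $\bar y$. Uniqueness of the minimizer of the second problem then forces $\bar y=\bar x$.

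I do not expect a serious obstacle: the proof is an assembly of standard convex-analytic facts, and the only points needing care are exactly those where the Hilbert structure and the specific quadratic term enter --- the coercivity/existence step (affine minorant plus weak lower semi-continuity of convex lower semi-continuous functionals), the legitimacy of the subdifferential sum and chain rules (the sum rule requires one summand continuous, which $\oh\norm{\cdot}^2$ is), and the Fenchel--Young equivalence together with $\vp^{**}=\vp$ (which requires $\vp$ proper, convex and lower semi-continuous). All of these are contained in \cite{rockafellar66} and \cite[Theorem~1.12]{B}, and chaining them as above proves the lemma.
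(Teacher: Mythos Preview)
Your argument is correct and complete: reducing both problems to their first-order optimality inclusions and linking them via the Fenchel--Young equivalence is a standard and clean route to this result. The paper itself does not give a proof of this lemma --- it simply states it as a classical fact easily deduced from \cite{rockafellar66} or \cite[Theorem~1.12]{B} --- so there is no proof to compare against; your writeup supplies the details that the paper omits.
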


\begin{proof}[Proof of Proposition~\ref{prop_obstacle}:]
We apply Lemma~\ref{lem_dual} in the Hilbert space
\begin{equation*}
\mathcal H:=\dot H^1(\mathcal M)=\left\lbrace V\in H^1(\mathcal M)\colon \int_{\mathcal M} V =0\right\rbrace,
\end{equation*}
endowed with the norm $\norm{V}^2=\int \abs{\nabla V}^2$, to the function
\begin{equation}\label{phibeta}
\varphi(V)=\varphi_\beta(V)=\frac\beta 2 \int_{\mathcal M} \abs{-\Delta V +H }.
\end{equation}
In formula \eqref{phibeta}, it is implicit that $\varphi(V)=+\infty$ if $\mu = -\Delta V + H$ is not a Radon measure. Note that, when $\mu$ \textit{is} a Radon measure, it must have zero average $\int\mu=0$, since $\mu=\Delta (*F-V)$.

We compute the Fenchel conjugate of $\varphi$. It holds
\begin{align*}
\varphi^*(V) & 
= \sup_{U\in \mathcal H } \left\lbrace \int_{\mathcal M}\! \nabla V \cdot \nabla U -\frac\beta 2 \int_{\mathcal M}\! \abs{-\Delta U +H} \right\rbrace \\
& = -\int_{\mathcal M} HV 
 + \sup_{U\in \mathcal H} \left\lbrace \int_{\mathcal M}\! (-\Delta U + H)V -\frac\beta 2 \int_{\mathcal M}\! \abs{-\Delta U +H} \right\rbrace \\
& = -\int_{\mathcal M} H V + \sup_{ \int P =0} \left\lbrace \int_{\mathcal M}\left( P V-\frac\beta 2 \abs{P}\right) \right\rbrace.
\end{align*}
In the last equality, the supremum may -- by a density argument -- be taken over all $L^2$ functions $P$ with zero average.

If $(\esssup V-\essinf V)\leq\beta$, then $\abs{V+\alpha}\leq \beta/2$ for some $\alpha\in\R$, so that
\begin{equation*}
\int_{\mathcal M}\!\left( P V-\frac\beta 2 \abs{P}\right) = \int_{\mathcal M}\! \left( (V+\alpha)P -\frac\beta 2 |P| \right) \leq 0,
\end{equation*}
and in that case
\begin{equation*}
\varphi^*(V)=-\int_{\mathcal M}\! HV.
\end{equation*}
On the other hand, if $(\esssup V-\essinf V)>\beta$, then up to translating $V$ we may assume that $\lbrace V>\beta/2\rbrace$ and $\lbrace V<-\beta/2\rbrace$ have positive measures. It is then easy to construct a function $P$ supported in those sets, such that $\int P=0$, $\int |P|=1$, and $\int PV>\beta/2$. Using $\lambda P$ as a test function for arbitrary $\lambda>0$, we deduce that $\varphi^*(V)=+\infty$.

From Lemma~\ref{lem_dual} it follows that $V_0\in \dot H^1(\mathcal M)$ minimizes $E_\beta$ if and only if $V_0$ minimizes 
\begin{equation*}
\frac 12 \int _{\mathcal M}\! \abs{\nabla V}^2 + \int_{\mathcal M}\! HV 
\end{equation*}
among $V\in \dot H^1(\mathcal M)$ such that $\esssup V-\essinf V\leq \beta$. Since both problems are invariant under addition of a constant, the restriction to the space $\dot H^1(\mathcal M)$ can be relaxed to obtain Proposition~\ref{prop_obstacle}.
\end{proof}

\subsection{Basic properties}\label{ss_basicprop}

In this section we concentrate on the obstacle problem
\begin{equation}\label{obstacleH}
\min \left\lbrace \int_{\mathcal M}\!\left( \abs{\nabla V}^2 + 2HV\right)\colon V\in H^1(\mathcal M),\: |V|\leq \beta/2 \right\rbrace.
\end{equation}
We recall the classical interpretation of \eqref{obstacleH} as a free boundary problem, and establish a monotonicity property of the free boundary.

The first step to these basic properties is the reformulation of the obstacle problem \eqref{obstacleH} as a variational inequality: a function $V\in H^1(\mathcal M)$ solves \eqref{obstacleH} if and only if $\abs{V}\leq \beta/2$ and
\begin{equation}\label{varineq}
\int_{\mathcal M}\! \nabla V \cdot \nabla (W-V) \geq -\int_{\mathcal M}\! H(W-V) \qquad\forall W\in H^1(\mathcal M),\: |W|\leq \beta/2.
\end{equation}
The proof of this weak formulation is elementary and can be found in many textbooks on convex analysis. See for instance \cite{rodrigues}.

Next we recall the standard reformulation of \eqref{varineq} as a free boundary problem.
\begin{lemma}\label{lem_freebound}
A function $V\in H^1(\mathcal M)$ with $\abs{V}\leq \beta/2$ solves \eqref{obstacleH} or equivalently \eqref{varineq} if and only if 
\begin{equation}\label{freebound}
\left\lbrace
\begin{aligned}
 V&\in W^{2,p}(\mathcal M),\quad 1<p<\infty,\\
 \Delta V & = H \quad \text{in }\lbrace \abs{V}<\beta/2\rbrace,\\
 0 & \geq H \quad \text{in }\lbrace V=\beta/2 \rbrace,\\
 0 & \leq H \quad \text{in }\lbrace V=-\beta/2 \rbrace.
\end{aligned}
\right.
\end{equation}
In particular $V\in C^{1,\alpha}(\mathcal M)$, so that at every regular point of the free boundaries $\partial \lbrace V=\pm\beta/2\rbrace$, the function $V$ satisfies the overdetermining boundary conditions $V=\pm\beta/2$ and $\partial V / \partial\nu =0$.
\end{lemma}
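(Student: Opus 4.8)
The plan is to argue from the variational inequality \eqref{varineq}, which has already been shown equivalent to \eqref{obstacleH}, and to prove the two implications separately. For the implication \eqref{varineq}$\Rightarrow$\eqref{freebound}, the first step is the regularity $V\in W^{2,p}(\mathcal M)$ for every $p<\infty$. Since $H\in C^1(\mathcal M)\subset L^\infty(\mathcal M)$ and the obstacles $\pm\beta/2$ are constant, this is exactly the classical regularity theory for (two-sided) obstacle problems, see e.g. \cite{rodrigues} and the references therein; I would only need to observe that the absence of boundary on the closed manifold $\mathcal M$ makes the estimate global with no boundary-regularity issue, and that the two-sidedness of the constraint causes no trouble because the two contact sets are disjoint. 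Once $V\in W^{2,p}$ with $p>2=\dim\mathcal M$, Sobolev embedding gives $V\in C^{1,\alpha}(\mathcal M)$, hence $\lbrace V=\pm\beta/2\rbrace$ are closed and $\lbrace\abs V<\beta/2\rbrace$ is open. On the latter set I would test \eqref{varineq} with the competitors $W=V\pm t\psi$, where $\psi\in C_c^\infty(\lbrace\abs V<\beta/2\rbrace)$: admissibility for small $t$ follows from continuity of $V$, and the two signs of $t$ force $\int\nabla V\cdot\nabla\psi+\int H\psi=0$, i.e. $\Delta V=H$ on $\lbrace\abs V<\beta/2\rbrace$.

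To get the sign of $H$ on the contact sets, fix $x_0\in\lbrace V=\beta/2\rbrace$. The sets $\lbrace V=\beta/2\rbrace$ and $\lbrace V=-\beta/2\rbrace$ are disjoint compacta, hence at positive distance, so I can pick a small ball $B\ni x_0$ with $\overline B\cap\lbrace V=-\beta/2\rbrace=\emptyset$; then $V\geq-\beta/2+\delta$ on $\overline B$ for some $\delta>0$. For $\psi\in C_c^\infty(B)$ with $\psi\geq0$ the competitor $W=V-t\psi$ satisfies $\abs W\leq\beta/2$ for small $t$, and \eqref{varineq} yields $\int\nabla V\cdot\nabla\psi+\int H\psi\leq0$, i.e. $\int_B(H-\Delta V)\psi\leq0$. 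On $B$ the integrand is supported in $B\cap\lbrace V=\beta/2\rbrace$ (elsewhere $\Delta V=H$), and there $\Delta V=0$ a.e. by the elementary fact that $\nabla V$ and $D^2V$ vanish a.e. on the level sets of the $W^{2,p}$ function $V$, so that $\Delta V=0$ a.e. there. Thus $\int_{B\cap\lbrace V=\beta/2\rbrace}H\psi\leq0$ for all such $\psi$, so $H\leq0$ a.e. on $B\cap\lbrace V=\beta/2\rbrace$; covering $\lbrace V=\beta/2\rbrace$ by such balls gives $H\leq0$ there. The condition $H\geq0$ on $\lbrace V=-\beta/2\rbrace$ follows symmetrically by perturbing $V$ upward.

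For the converse \eqref{freebound}$\Rightarrow$\eqref{varineq}, take $V\in W^{2,p}$ with $\abs V\leq\beta/2$ satisfying the three conditions, and any competitor $W\in H^1(\mathcal M)$ with $\abs W\leq\beta/2$. Integrating by parts on the closed manifold,
\begin{equation*}
\int_{\mathcal M}\nabla V\cdot\nabla(W-V)+\int_{\mathcal M}H(W-V)=\int_{\mathcal M}(H-\Delta V)(W-V),
\end{equation*}
and I would split the last integral over $\lbrace\abs V<\beta/2\rbrace$, $\lbrace V=\beta/2\rbrace$ and $\lbrace V=-\beta/2\rbrace$: on the first, $H-\Delta V=0$; on $\lbrace V=\beta/2\rbrace$ one has $\Delta V=0$ a.e. (level-set fact again), $H\leq0$, and $W-V=W-\beta/2\leq0$, so the integrand is $\geq0$; symmetrically the integrand is $\geq0$ on $\lbrace V=-\beta/2\rbrace$. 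Hence the left-hand side is $\geq0$, which is \eqref{varineq}. Finally, $V\in C^{1,\alpha}(\mathcal M)$ gives $V=\pm\beta/2$ on $\partial\lbrace V=\pm\beta/2\rbrace$ by continuity, and since $\nabla V$ is continuous and vanishes a.e. on the contact set it vanishes at every regular free boundary point, i.e. $\partial V/\partial\nu=0$ there.

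The main obstacle is the $W^{2,p}$ regularity of $V$; everything else is essentially bookkeeping of admissible competitors. The two-sidedness, rather than an obstruction, is handled by the observation that $\lbrace V=\beta/2\rbrace$ and $\lbrace V=-\beta/2\rbrace$ lie at positive distance, which is exactly what lets the localized competitors above respect the opposite constraint for free. One should also keep track of the ``a.e.\ versus everywhere'' distinction in the sign conditions of \eqref{freebound}, reconciled using the continuity of $H$ together with the structure of the free boundary.
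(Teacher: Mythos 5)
Your proof is correct and follows essentially the same route as the paper: both directions are handled by testing the variational inequality \eqref{varineq} with suitably localized competitors, using the Stampacchia level-set identity $\Delta V=0$ a.e.\ on $\{V=\pm\beta/2\}$ (explicit in your write-up, implicit in the paper's) together with the positive distance between the two contact sets, and the converse is obtained by the same integration-by-parts and sign decomposition. The only point where your treatment diverges is the $W^{2,p}$ regularity: you invoke the classical two-sided obstacle theory directly, whereas the paper sketches a self-contained argument by first observing that $V$ also minimizes the \emph{unconstrained} functional $\widetilde{\mathcal F}(V)=\int_{\mathcal M}\bigl(|\nabla V|^2+2HV\one_{|V|\le\beta/2}\bigr)$, then regularizing $\Phi(t)=t\one_{|t|<\beta/2}$ by smooth $\Phi_\varepsilon$, applying Calderon--Zygmund estimates to the regularized minimizers, and passing to the limit; either path reaches the same estimate, so this is a presentational rather than a substantive difference.
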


The only non-elementary part of Lemma~\ref{lem_freebound}  is the $W^{2,p}$ regularity of the solution. There are many textbooks on the subject, at least for the one-sided obstacle problem. See for instance \cite{petrosyan}. For the two-sided obstacle problem we refer to \cite{dalmasomoscovivaldi}, in which the authors study very fine properties of two-sided obstacle problems. For the convenience of the reader, since most of the literature  only deals with a one-sided obstacle (and \cite{dalmasomoscovivaldi} reaches far beyond this quite elementary result), we provide a sketch of the proof in Appendix~\ref{a_prooffreebound}.

Recall that in our case, $\mu =-\Delta V + H$ represents the vorticity measure. In light of Lemma~\ref{lem_freebound}, this measure is supported in $\lbrace V=\pm\beta/2\rbrace$. In that region, vortices are distributed with density $H$. 

For $\beta > \beta_c$, where
\begin{equation}\label{beta_c}
\beta_c := \max (*F) - \min (*F),
\end{equation}
the function $*F + \alpha$ solves the obstacle problem \eqref{obstacleH}, as long as the constant $\alpha$ satisfies $\max (*F) -\beta/2 \leq \alpha \leq \min (*F) +\beta/2$, and the vorticity measure $-\Delta V+H$ is identically zero.

For $\beta\leq \beta_c$, the solution $V=V_\beta$ of the obstacle problem~\eqref{obstacleH} must satisfy
\begin{equation*}
\max V_\beta -\min V_\beta =\beta,
\end{equation*}
and therefore is unique (see Remark~\ref{rem_obstacle}). Recall that the superconductivity region $SC_\beta$ is defined as the set where the vorticity measure $-\Delta V +H$ vanishes. According to Lemma~\ref{lem_freebound}, that region is exactly
\begin{equation}\label{SCbeta}
SC_\beta =\lbrace \abs{V_\beta}<\beta/2 \rbrace.
\end{equation}

A first basic property of the superconductivity region $SC_\beta$ is its monotonicity.
\begin{prop}\label{prop_monot}
For any $0<\beta_1 < \beta_2\leq \beta_c$, it holds
\begin{equation*}
SC_{\beta_1} \subset SC_{\beta_2}.
\end{equation*}
\end{prop}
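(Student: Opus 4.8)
The plan is to exploit the obstacle-problem reformulation \eqref{obstacle*F}, together with the fact that the solutions $V_{\beta_1}$ and $V_{\beta_2}$ coincide (up to a constant) with the minimizers of the problem of projecting $*F$ onto the convex set $\{|V|\le\beta/2\}$ in the $\dot H^1$ metric. Since the obstacle constraints are nested, $\{|V|\le\beta_1/2\}\subset\{|V|\le\beta_2/2\}$, one expects the contact set to shrink as the constraint tightens, which is precisely the claimed monotonicity $SC_{\beta_1}\subset SC_{\beta_2}$. The difficulty is that both obstacles move simultaneously (two-sided), so a naive pointwise comparison of $V_{\beta_1}$ and $V_{\beta_2}$ is not available; instead I would argue through the variational inequality and a clean comparison.

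First I would recall from Remark~\ref{rem_obstacle} and the discussion following Lemma~\ref{lem_freebound} that for $\beta\le\beta_c$ the minimizer $V_\beta$ is unique once we normalize it, say, so that $\min V_\beta = -\beta/2$ and $\max V_\beta = \beta/2$ (the constraint is saturated on both sides). Set $u_i = V_{\beta_i}$ for $i=1,2$. I would use $u_i$ as test functions in each other's variational inequality \eqref{varineq}. The subtlety is admissibility: $u_1$ satisfies $|u_1|\le\beta_1/2\le\beta_2/2$ so it is admissible for the $\beta_2$-problem directly; but $u_2$ is in general not admissible for the $\beta_1$-problem. To get around this I would instead work with the equivalent formulation \eqref{obstacle*F} and the comparison principle for two-sided obstacle problems from \cite{dalmasomoscovivaldi}, which is exactly tailored to compare solutions of obstacle problems with different (here: differently sized) obstacles but the same right-hand side data.

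Concretely, the key step is: by \cite{dalmasomoscovivaldi}, since the obstacle intervals are nested ($[-\beta_1/2,\beta_1/2]\subset[-\beta_2/2,\beta_2/2]$) and the source $H = \Delta(*F)$ is the same, the coincidence sets are ordered — the solution with the larger obstacle has the smaller contact set. Equivalently, on the complement of $SC_{\beta_2}$, i.e. where $|u_2| = \beta_2/2$, Lemma~\ref{lem_freebound} forces $\pm H\le 0$ with the appropriate sign; I would then show that at such points $u_1$ must also touch its obstacle, because $u_1$ solves $\Delta u_1 = H$ only where $|u_1|<\beta_1/2$ and the sign condition on $H$ inherited from the $\beta_2$-problem, combined with a maximum-principle argument on the region $\{u_1 > -\beta_1/2\}$ (resp. $\{u_1<\beta_1/2\}$), prevents $u_1$ from staying in the interior there. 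Taking complements gives $SC_{\beta_1}\subset SC_{\beta_2}$.

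The main obstacle I anticipate is handling the two-sidedness cleanly: one cannot simply invoke the classical scalar comparison $u_1\le u_2$ (it is false in general for two-sided problems, and anyway $u_i$ are defined only up to additive constants). The honest route is to appeal to the refined comparison results of \cite{dalmasomoscovivaldi} for the coincidence sets, or alternatively to run the maximum principle separately on the two one-sided pieces $\{V_\beta = \beta/2\}$ and $\{V_\beta = -\beta/2\}$ of the contact set, using the sign of $H$ provided by Lemma~\ref{lem_freebound} in each piece and the $W^{2,p}$ (hence $C^{1,\alpha}$) regularity to justify the pointwise arguments and the use of Hopf-type boundary behavior at the free boundary. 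A short verification that the normalization $\max V_\beta-\min V_\beta=\beta$ does not interfere — e.g. that the two contact components are each nonempty for $\beta<\beta_c$, which follows since $H$ changes sign — completes the argument.
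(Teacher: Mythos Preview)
Your instinct to route through the two-sided comparison principle of \cite{dalmasomoscovivaldi} is correct, but the proposal stops short of the one concrete step that makes it work, and the substitutes you sketch do not close the gap.

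The comparison principle in question (stated in the paper as Lemma~\ref{lem_comparison}) compares solutions of two-sided obstacle problems with constraints $\alpha_j\le V\le\gamma_j$ and concludes $V_1\le V_2$ provided $\alpha_1\le\alpha_2$ \emph{and} $\gamma_1\le\gamma_2$ --- both obstacles shifted in the same direction. In your setup the obstacle intervals are \emph{nested}, $[-\beta_1/2,\beta_1/2]\subset[-\beta_2/2,\beta_2/2]$: the lower obstacle moves down while the upper moves up, so the lemma does not apply as stated, and there is no off-the-shelf ``coincidence sets are ordered for nested obstacles'' result to quote. Your fallback --- inferring from $H\le 0$ on $\{V_{\beta_2}=\beta_2/2\}$ that $V_{\beta_1}$ must also touch its upper obstacle there via a maximum principle on $\{V_{\beta_1}>-\beta_1/2\}$ --- does not go through: the sign of $H$ at a contact point is a \emph{consequence} of coincidence, not a mechanism that forces coincidence for a different solution, and you have not identified a subharmonic quantity with the right boundary behavior to run the argument.

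The device you explicitly dismiss is precisely what the paper uses: a \emph{shifted} scalar comparison. Set $\widetilde V_j:=V_{\beta_j}+\beta_j/2$. Then $\widetilde V_j$ solves the obstacle problem with constraints $0\le V\le\beta_j$; now the lower obstacles agree and the upper ones are ordered, so Lemma~\ref{lem_comparison} applies and gives (ruling out the constant-difference alternative since $\beta_1<\beta_2$)
\[
V_{\beta_1}+\tfrac{\beta_1}{2}\;\le\;V_{\beta_2}+\tfrac{\beta_2}{2},
\]
hence $\{V_{\beta_1}>-\beta_1/2\}\subset\{V_{\beta_2}>-\beta_2/2\}$. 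The symmetric shift $V_{\beta_j}-\beta_j/2$ yields $\{V_{\beta_1}<\beta_1/2\}\subset\{V_{\beta_2}<\beta_2/2\}$, and intersecting the two inclusions gives $SC_{\beta_1}\subset SC_{\beta_2}$. So a pointwise comparison \emph{is} available --- you just have to translate first to align one of the two obstacles, then repeat for the other.
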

In other words, increasing the intensity of the applied magnetic field shrinks the region of persisting superconductivity, which consistant with physical intuition.
Since we have to deal with a two-sided obstacle problem, this monotonicity property is not as obvious as in \cite[Chapter~7]{SS2}. To prove it, we use a comparison principle for two-sided obstacle problems \cite[Lemma~2.1]{dalmasomoscovivaldi}. We state and prove here a particular form that will also be useful later on.

\begin{lemma}\label{lem_comparison}
Let $H_1\geq H_2$ be bounded, real-valued functions on $\mathcal M$. Let also $\alpha_1\leq\alpha_2$ and $\beta_1\leq\beta_2$ be real numbers. For $j=1,2$, let $V_j\in H^1(\mathcal M)$ solve respectively the obstacle problems
\begin{equation*}
\min \left\lbrace \int_{\mathcal M}\! \left(\abs{\nabla V}^2 + 2H_jV\right)\colon \alpha_j\leq V\leq \beta_j\right\rbrace.
\end{equation*}
Then either $V_1-V_2$ is constant, or $V_1\leq V_2$.
\end{lemma}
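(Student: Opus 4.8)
The plan is to reduce to the comparison principle of \cite[Lemma~2.1]{dalmasomoscovivaldi} by a standard ``mixing'' argument, using $\max(V_1,V_2)$ and $\min(V_1,V_2)$ as competitors. First I would record that, since $\alpha_1\le\alpha_2$ and $\beta_1\le\beta_2$, the function $W_1:=\min(V_1,V_2)$ is admissible for the problem with data $(H_1,\alpha_1,\beta_1)$ (it satisfies $\alpha_1\le W_1\le\beta_1$ because $W_1\le V_1\le\beta_1$ and $W_1\ge\min(\alpha_1,\alpha_2)=\alpha_1$, using $V_2\ge\alpha_2\ge\alpha_1$), and $W_2:=\max(V_1,V_2)$ is admissible for the problem with data $(H_2,\alpha_2,\beta_2)$. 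Then I would write the two variational inequalities \eqref{varineq} (in the appropriate form for these constrained problems) — the one for $V_1$ tested against $W_1$, and the one for $V_2$ tested against $W_2$ — and add them.

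The key computation is that on the set $\{V_1>V_2\}$ one has $W_1-V_1=V_2-V_1$ and $W_2-V_2=V_1-V_2$, while on $\{V_1\le V_2\}$ both differences vanish; hence the gradient terms combine to give $-2\int_{\{V_1>V_2\}}\abs{\nabla(V_1-V_2)}^2$ and the zeroth-order terms combine to give $-2\int_{\{V_1>V_2\}}(H_1-H_2)(V_1-V_2)$. Summing the two inequalities therefore yields
\begin{equation*}
\int_{\{V_1>V_2\}}\abs{\nabla(V_1-V_2)}^2 + \int_{\{V_1>V_2\}}(H_1-H_2)(V_1-V_2)\le 0 .
\end{equation*}
Both integrands are nonnegative — the first trivially, the second because $H_1\ge H_2$ and $V_1-V_2>0$ on the integration set — so both vanish. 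In particular $\nabla(V_1-V_2)=0$ a.e.\ on $\{V_1>V_2\}$; combined with the fact that $(V_1-V_2)^+\in H^1(\mathcal M)$ vanishes on the complement, this forces $\nabla(V_1-V_2)^+=0$ a.e.\ on all of $\mathcal M$, so $(V_1-V_2)^+$ is constant. If that constant is $0$ we get $V_1\le V_2$; otherwise $V_1-V_2$ is a positive constant (one then checks the negative part is zero, since $\{V_1>V_2\}$ has full measure when the constant is positive), and $V_1-V_2$ is constant.

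The main obstacle is the bookkeeping of the admissibility of $W_1,W_2$ and the precise form of the variational inequality for a \emph{two-sided} obstacle with \emph{distinct} data on the two problems — one must be careful that the test function in the inequality for $V_j$ respects the constraints of the $j$-th problem, which is exactly why the ordering hypotheses $\alpha_1\le\alpha_2$, $\beta_1\le\beta_2$ are needed. Once this is set up correctly, the rest is the routine truncation argument above; alternatively one can simply invoke \cite[Lemma~2.1]{dalmasomoscovivaldi} directly, whose hypotheses are met by the monotonicity of the data, and note that the ``constant or ordered'' dichotomy is the form in which their conclusion specializes on a connected surface $\mathcal M$.
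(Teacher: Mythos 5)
Your proof is essentially identical to the paper's: both use $W_1=\min(V_1,V_2)$ and $W_2=\max(V_1,V_2)$ as admissible test functions in the variational inequalities for $V_1$ and $V_2$ respectively (the ordering hypotheses $\alpha_1\le\alpha_2$, $\beta_1\le\beta_2$ exactly guarantee admissibility), combine them to obtain $\int_{\mathcal M}\abs{\nabla(V_1-V_2)_+}^2\le\int_{\mathcal M}(H_2-H_1)(V_1-V_2)_+\le 0$, and conclude $(V_1-V_2)_+$ is constant. One small bookkeeping slip: the combined gradient and zeroth-order terms each give a single $-\int_{\{V_1>V_2\}}$ contribution, not $-2\int_{\{V_1>V_2\}}$ as you state in passing, but since the spurious factor appears in both terms it cancels and your final displayed inequality is the correct one.
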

\begin{proof}
For the convenience of the reader, we provide here the elementary proof, which consists in remarking that
\begin{equation*}
W_1 = \min(V_1,V_2)\quad\text{and }
W_2 = \max(V_1,V_2)
\end{equation*}
are admissible test functions in the variational inequalities 
\begin{equation*}
 \int_{\mathcal M}\! \nabla V_j \cdot \nabla (W_j-V_j)\geq -\int_{\mathcal M}\! H_j(W_j-V_j),\qquad\forall W_j\in H^1,\: \alpha_j\leq W_j\leq \beta_j.
\end{equation*}
 Substracting the resulting inequalities, we obtain
\begin{equation*}
\int_{\mathcal M}\abs{\nabla (V_1-V_2)_+}^2 \leq \int_{\mathcal M}(H_2-H_1)(V_1-V_2)_+ \leq 0,
\end{equation*}
where $(V_1-V_2)_+=\max(V_1-V_2,0)$. We conclude that $(V_1-V_2)_+$ is a constant function.
\end{proof}

With Lemma~\ref{lem_comparison} at hand, we may prove the monotonicity of the superconductivity region.

\begin{proof}[Proof of Proposition~\ref{prop_monot}:]
Let $V_1$ and $V_2$ denote the solution of the obstacle problem \eqref{obstacleH} corresponding respectively to $\beta=\beta_1$ and $\beta=\beta_2$. Let 
\begin{equation*}
\widetilde V_1 =V_1 +\beta_1/2,\quad\text{and}\quad \widetilde V_2=V_2+\beta_2/2,
\end{equation*}
so that for $j=1,2$, $\widetilde V_j$ solves the obstacle problem
\begin{equation*}
\min\left\lbrace \int_{\mathcal M}\!\left(\abs{\nabla V}^2 + 2HV\right)\colon 0\leq V\leq \beta_j\right\rbrace.
\end{equation*}
Therefore, applying Lemma~\ref{lem_comparison} with $H_1=H_2=H$, $\alpha_1=\alpha_2=0$ and $\beta_1\leq \beta_2$, we deduce that
\begin{equation*}
V_1+\beta_1/2\leq V_2 +\beta_2/2.
\end{equation*}
(If $\widetilde V_1- \widetilde V_2$ is constant, then $\beta_2 = \max V_1-\min V_1 = \beta_1$.)
In particular, we obtain that
\begin{equation*}
\lbrace V_1 >-\beta_1/2 \rbrace \subset \lbrace V_2 > -\beta_2/2\rbrace.
\end{equation*}
By a similar argument, we show that
\begin{equation*}
\lbrace V_1 <\beta_1/2 \rbrace \subset \lbrace V_2 < \beta_2/2\rbrace,
\end{equation*}
and conclude that $SC_{\beta_1}\subset SC_{\beta_2}$.
\end{proof}

\begin{remark}\label{rem_cont}
It follows from the above proof  that
\begin{equation*}
|V_1-V_2|\leq (\beta_2-\beta_1)/2,
\end{equation*}
thus proving the continuity of $\beta\mapsto V_\beta$ for $0\leq \beta\leq\beta_c$.
\end{remark}

\section{The small $\beta$ limit}\label{s_smallbeta}
In this section we study what happens to the superconductivity set when the intensity of the field is high enough to confine it in a narrow region. 
We make the (generic) non degeneracy assumption that
\begin{equation}\label{Hnondegen}
|H|+|\nabla H| > 0\quad\text{in }\mathcal M.
\end{equation}
In other words, $\nabla H\neq 0$ in $\lbrace H=0\rbrace$. This implies in particular that the set $\Sigma:=\lbrace H=0\rbrace$ where the magnetic field vanishes is a finite disjoint union of smooth closed curves. We also note that condition \eqref{Hnondegen} also implies that we are not in the situation where not even the first critical field is defined(see \cite{C}, Theorem 3.1).

Let us say a few words here about the nondegeneracy assumption \eqref{Hnondegen}. This is the  same nondegeneracy assumption that has been considered in works on the spectral analysis of the magnetic Laplacian \cite{montgomery95} and on higher applied magnetic fields in Ginzburg-Landau \cite{pankwek02,attar14}.
Moreover, we emphasize that \eqref{Hnondegen} is a generic assumption, in the following sense.
\begin{lem}\label{generic}
The set of $H$ satisfying \eqref{Hnondegen} is open and dense in $C^1(\mathcal M)$.
\end{lem}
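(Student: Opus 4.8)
The plan is to prove the two assertions separately, openness being the easy part and density the main obstacle.

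\medskip

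\noindent\textbf{Openness.} Suppose $H\in C^1(\mathcal M)$ satisfies \eqref{Hnondegen}, i.e. the continuous function $x\mapsto |H(x)|+|\nabla H(x)|$ is strictly positive on the compact set $\mathcal M$, hence bounded below by some $\delta>0$. If $\widetilde H\in C^1(\mathcal M)$ with $\|\widetilde H-H\|_{C^1}<\delta/2$, then $|\widetilde H|+|\nabla\widetilde H|\geq |H|+|\nabla H| - |H-\widetilde H| - |\nabla H-\nabla\widetilde H| > \delta/2>0$ everywhere, so $\widetilde H$ satisfies \eqref{Hnondegen}. Thus the set is open in $C^1(\mathcal M)$.

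\medskip

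\noindent\textbf{Density.} Given an arbitrary $H\in C^1(\mathcal M)$ and $\eta>0$, I must produce $\widetilde H\in C^1(\mathcal M)$ with $\|\widetilde H-H\|_{C^1}<\eta$ satisfying \eqref{Hnondegen}. First, by mollification (working in a finite atlas of coordinate charts and a subordinate partition of unity, or simply noting $\mathcal M$ is a compact $C^\infty$ surface so $C^\infty(\mathcal M)$ is dense in $C^1(\mathcal M)$), reduce to the case $H\in C^\infty(\mathcal M)$, at the cost of an $\eta/2$ error. For a smooth $H$, consider the map $\Phi=(H,dH)\colon \mathcal M\to \mathbb R\times T^*\mathcal M$, or more concretely in local coordinates the map $x\mapsto (H(x),\nabla H(x))\in\mathbb R^3$; the failure of \eqref{Hnondegen} is exactly the statement that $0\in\mathbb R^3$ lies in the image of this map restricted to $\Sigma=\{H=0\}$, equivalently that $\nabla H$ has a zero on $\Sigma$. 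The idea is to perturb $H$ by a small additive constant: set $\widetilde H = H - c$ for a suitably chosen small $c\in\mathbb R$. Then $\|\widetilde H-H\|_{C^1}=|c|$, and $\widetilde H$ fails \eqref{Hnondegen} precisely when $\nabla H$ vanishes somewhere on $\{H=c\}$, i.e. when $c$ is a critical value of $H$ at which $\nabla H=0$ — more simply, when there is $x$ with $H(x)=c$ and $\nabla H(x)=0$, i.e. when $c\in H(\mathrm{Crit}(H))$ where $\mathrm{Crit}(H)=\{\nabla H=0\}$. By Sard's theorem, the set of critical values $H(\mathrm{Crit}(H))$ has Lebesgue measure zero in $\mathbb R$, so we may choose $|c|<\eta/2$ with $c\notin H(\mathrm{Crit}(H))$. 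Then $\widetilde H=H-c$ satisfies \eqref{Hnondegen} and $\|\widetilde H-H\|_{C^1}<\eta/2$; combined with the mollification step, $\|\widetilde H-H\|_{C^1}<\eta$, proving density.

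\medskip

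\noindent\textbf{Main obstacle.} The substantive point is the density argument, and the key realization that makes it clean is that a generic additive constant suffices — one does not need to perturb $H$ in a more elaborate way, because shifting $H$ by $-c$ converts the condition "$\nabla H\neq 0$ on $\{H=0\}$" into "$c$ is a regular value of $H$" in the relevant sense, after which Sard's theorem does the work. The only mild care needed is: (i) ensuring the mollification can be done globally on $\mathcal M$ with small $C^1$ error (standard, since $\mathcal M$ is a smooth compact manifold); and (ii) noting Sard's theorem applies since $H\in C^\infty$ and $\mathcal M$ is $1$-dimensional worth of extra regularity beyond $\dim = 2$ — in fact $C^2$ would already be more than enough for a $2$-manifold. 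Finally, one should double-check the logical equivalence "$H-c$ fails \eqref{Hnondegen} $\iff$ $\exists x:\ H(x)=c,\ \nabla H(x)=0$", which is immediate from the definitions, to close the argument.
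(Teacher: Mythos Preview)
Your proof is correct and takes a genuinely different, more elementary route than the paper. The paper invokes Quinn's parametric transversality theorem applied to the evaluation map $\Phi(H,x)=H(x)$, concluding that the set of $H$ for which $\Phi(H,\cdot)$ is transverse to $\{0\}$ is dense. You instead observe that the one-parameter family $H-c$ already suffices: after smoothing, the failure of \eqref{Hnondegen} for $H-c$ is precisely the statement that $c$ is a critical value of $H$, and Sard's theorem (valid for $C^2$ maps from a surface to $\mathbb R$) gives arbitrarily small regular values. Your argument is self-contained and avoids the black box of Quinn's theorem; the paper's approach, while heavier machinery for this statement, has the virtue of making manifest that the result is an instance of a general transversality principle and would adapt without change to more complicated nondegeneracy conditions where a constant shift would not obviously work.
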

\begin{proof}
The fact that \eqref{Hnondegen} is an open condition in $C^1(\mathcal M)$ is clear. The density follows from a transversality theorem by Quinn \cite[Theorem~3]{quinn70}, applied to the $C^1$ map
\begin{equation*}
\Phi\colon C^1(\mathcal M) \times \mathcal M\to\mathbb R,\quad (H,x)\mapsto H(x).
\end{equation*}
For a function $H\in C^1(\mathcal M)$, \eqref{Hnondegen} is equivalent to $\Phi(H,\cdot)$ being transverse to $\lbrace 0\rbrace$. Clearly, $D_H\Phi(H,x)=I_{C^1(\mathcal M)}$ is Fredholm, and $\Phi$ is transverse to $\lbrace 0\rbrace$. Therefore, the set of $H$ such that $\Phi(H,\cdot)$ is transverse to $\lbrace 0\rbrace$ is dense in $C^1(\mathcal M)$. 
\end{proof}

We are interested in the behavior, as $\beta\to 0$, of the superconductivity region $SC_\beta$ \eqref{SCbeta}.

We let $d\colon\mathcal M\to\mathbb R_+$ denote the distance function to the set $\Sigma=\lbrace H=0\rbrace$, that is
\begin{equation}\label{defd}
d(x)=\mathrm{dist}(x,\lbrace H=0\rbrace).
\end{equation}
In this context we characterize the behavior of $SC_\beta$ in terms of the function $d$, as follows(this is a more explicit version of Theorem \ref{simplifiedmain}).

\begin{theorem}\label{thm_scbeta}
Under the non-degeneracy assumption \eqref{Hnondegen} on the magnetic field, there exists $\beta_0>0$ and $C>0$ such that, for $\beta\in(0,\beta_0)$, \begin{equation}\label{boundsscbeta}
\left\lbrace d\leq \frac 1C \beta^{1/3}\right\rbrace \subset SC_\beta \subset \left\lbrace d\leq C\beta^{1/3} \right\rbrace,
\end{equation}
where $SC_\beta$ is the superconductivity region  \eqref{SCbeta}, and $d$ denotes the distance to the zero locus of the magnetic field \eqref{defd}.
\end{theorem}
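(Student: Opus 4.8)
The plan is to establish both inclusions in \eqref{boundsscbeta} simultaneously, by squeezing the solution $V=V_\beta$ of the obstacle problem \eqref{obstacleH} between two explicitly built solutions $V^-$ and $V^+$ of \emph{modified} obstacle problems, with data $\widetilde H^-\le H\le\widetilde H^+$, and invoking the comparison principle of Lemma~\ref{lem_comparison}. Since $\nabla H\neq 0$ on $\Sigma=\{H=0\}$, the set $\Sigma$ is a finite disjoint union of smooth closed curves $\Gamma_1,\dots,\Gamma_N$; each $\Gamma_i$ separates $\mathcal M\cong\mathbb S^2$ into a side where $H>0$ and a side where $H<0$, and $\mathcal M\setminus\Sigma$ is a finite union of pieces on each of which $H$ has a fixed sign. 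I fix, for each $i$, a tubular neighborhood of $\Gamma_i$ with a longitudinal coordinate $s$ and a transverse coordinate $t$, oriented so that $t>0$ is the side where $H>0$, in which $H(s,t)=a_i(s)\,t+O(t^2)$ with $0<a_-\le a_i(s)\le a_+$ uniformly in $i,s$. Working in such coordinates, rather than with the distance $d$ to $\Sigma$ (which is only Lipschitz), is essential.

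The $\beta^{1/3}$ scale comes from the heuristic that a solution with $\Delta V=H\approx a_i(s)t$ and $\abs{V}\le\beta/2$ has, near $\Gamma_i$, an essentially cubic profile $V\approx\frac{a_i(s)}{6}t^3+(\text{affine in }t)$ on a tube $\{\abs{t}\lesssim\beta^{1/3}\}$ reaching the obstacles $\pm\beta/2$ with vanishing normal derivative at its edges. Accordingly I would take $\widetilde H^\mp$ equal to $H$ outside the tubes and, inside each tube, equal to a field of the form $\bigl(a_i(s)\mp\eta\,\mathrm{sgn}\,t\bigr)t$ (for a small fixed $\eta<a_-$) plus a lower-order correction, the tilt and a possible $O(\beta^{1/3})$ shift being chosen precisely so that $\widetilde H^-\le H$ and $\widetilde H^+\ge H$ hold \emph{throughout} $\mathcal M$. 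The modified obstacle problems then possess explicit profiles $V^\pm$, essentially cubic in $t$, glued in $C^1$ to the constants $+\beta/2$ over the pieces where $H<0$ and $-\beta/2$ over the pieces where $H>0$; the half-widths $\delta^\pm_+,\delta^\pm_-$ of their superconductivity tubes $SC^\pm_\beta=\{\abs{V^\pm}<\beta/2\}$ (on the $t>0$ and $t<0$ sides respectively) are all comparable to $\beta^{1/3}$, with constants depending only on $a_\pm$ and $\eta$. One then checks that $V^\pm\in W^{2,p}(\mathcal M)$ and verifies the free-boundary relations \eqref{freebound} with data $\widetilde H^\pm$, so that Lemma~\ref{lem_freebound} identifies $V^\pm$ as the (unique, since they touch both obstacles) solutions.

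Granting this, Lemma~\ref{lem_comparison} --- applied once with $(H_1,H_2)=(H,\widetilde H^-)$ and once with $(H_1,H_2)=(\widetilde H^+,H)$, the constant alternative being ruled out because all three solutions touch both obstacles --- gives $V^+\le V_\beta\le V^-$, whence
\[
\{V^+=\beta/2\}\cup\{V^-=-\beta/2\}\ \subset\ SC_\beta^c\ \subset\ \{V^-=\beta/2\}\cup\{V^+=-\beta/2\}.
\]
From the explicit profiles, $\{V^-=-\beta/2\}\supset\{d\ge C\beta^{1/3}\}\cap\{H>0\}$ and $\{V^-=\beta/2\}\subset\{d\ge\beta^{1/3}/C\}$ (because $\delta^-_+\le C\beta^{1/3}$ and $\delta^-_-\ge\beta^{1/3}/C$), and symmetrically for $V^+$ with the two signs of $H$ interchanged. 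Hence the left inclusion above yields $SC_\beta^c\supset\{d\ge C\beta^{1/3}\}$, i.e. $SC_\beta\subset\{d<C\beta^{1/3}\}$, and the right inclusion yields $SC_\beta^c\subset\{d\ge\beta^{1/3}/C\}$, i.e. $\{d<\beta^{1/3}/C\}\subset SC_\beta$. Note that neither barrier alone suffices: $V^-$ controls the lower contact set of $V_\beta$ from outside but only the upper contact set from inside (and $V^+$ the reverse), so it is the \emph{pair} that produces both halves of both inclusions.

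The genuine difficulty is the construction in the second paragraph: one must exhibit $\widetilde H^\pm$ and $V^\pm$ for which $\widetilde H^-\le H\le\widetilde H^+$ holds everywhere on $\mathcal M$ --- which forces careful control of the $O(t^2)$ Taylor remainder of $H$, of the lower-order terms of the Laplace--Beltrami operator in the chosen coordinates (which produce $O(\beta^{2/3})$ corrections whose sign, linked to the geodesic curvature of $\Gamma_i$, does not go the favorable way for free and so dictates the precise tilt and shift of the modified fields), and of the matching of $\widetilde H^\pm$ to $H$ across the tube boundaries --- while simultaneously keeping the profiles $V^\pm$ explicit enough to read off $\delta^\pm_\pm\asymp\beta^{1/3}$ and regular enough ($W^{2,p}$) for Lemma~\ref{lem_freebound} to apply. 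Once the coordinate system and the modifications are pinned down, the comparison step and the extraction of the distance estimates run as above, and the constant $C$ and threshold $\beta_0$ come out of $a_\pm$, $\eta$, $N$ and the coordinate distortions.
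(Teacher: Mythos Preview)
Your proposal is correct and follows essentially the same strategy as the paper: build explicit barrier profiles near each component of $\{H=0\}$, define the modified data $\widetilde H^\pm$ as $\Delta V^\pm$ there, verify via Lemma~\ref{lem_freebound} that $V^\pm$ solve the modified two-sided obstacle problems, apply Lemma~\ref{lem_comparison}, and read off both inclusions from the \emph{pair} of barriers. Your remark that neither barrier alone suffices is exactly the point the paper stresses.

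The one substantive implementation difference concerns the Laplace--Beltrami corrections. You propose to keep the $s$-dependence through $a_i(s)$, work in Fermi coordinates $t$, and absorb the curvature term $-\kappa\,\partial_t V$ (which does not vanish at $t=0$ and whose sign is uncontrolled) by an ad hoc tilt and $O(\beta^{1/3})$ shift of the modified fields. The paper instead replaces the Fermi coordinate $y$ by $z=y+\tfrac12 y^2\kappa(x,y)$, so that $\Delta\approx\partial_x^2+\partial_z^2$ with every correction carrying an explicit factor of $z$; it then bounds $H$ above and below by functions of $z$ alone (of the form $cz\,\one_{z>0}+Cz\,\one_{z<0}$ and the reverse) and takes the barriers to be piecewise-cubic functions $v(z)$ of the single variable $z$, with $v''$ exactly linear in $z$. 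Because both $v''$ and the Laplacian corrections factor through $z$, the inequality $\Delta v\le H$ (resp.\ $\ge$) reduces to a comparison of slopes and holds uniformly down to $z=0$ with no shift needed. This yields a considerably cleaner construction: the free boundary sits at constant $z=\pm\eta_\pm$, the profile is independent of the longitudinal variable, and there are no $\partial_s$-terms to track. Your route is workable, but the bookkeeping you rightly flag in your last paragraph is precisely what the $z$-coordinate change eliminates.
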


In the proof we construct explicit solutions to modified obstacle problems, in order to apply the comparison principle Lemma~\ref{lem_comparison}. The comparison functions are constructed locally near each component $\Gamma$ of $\lbrace H=0\rbrace$, and then we need to extend and paste these functions and the associated modified obstacle problem data. Although the construction looks local, it is worth noting that we really need to make it near \textit{every} component $\Gamma$ of $\lbrace H=0\rbrace$. Otherwise the pasting would not provide us with obstacle problems comparable to the original one, because a solution has to change sign near \textit{every} curve $\Gamma$.

\begin{remark}\label{rem_outer+thick}
Another natural approach to proving Theorem~\ref{thm_scbeta} would be to construct separate comparison functions in $\lbrace H>0\rbrace$ and $\lbrace H<0\rbrace$. In those regions, the obstacle problem becomes one-sided, so that more standard constructions with a classical comparison principle can be made. On the other hand, there is no boundary conditions in those regions, so that such a construction would only provide us with the outer bound 
\begin{equation}\label{outer}
SC_\beta \subset \lbrace d\leq C \beta^{1/3}\rbrace.
\end{equation}
 To obtain the bounds \eqref{boundsscbeta} which show that
the superconductivity set extends to \underline{both} sides of the zero locus of $H$ by a $\beta^{\frac{1}{3}}$ margin, it seems that we really have to appeal to the comparison principle for two-sided obstacle problems. 
However, if we would just content ourselves with showing that the superconductivity set had `thickness' proportional to $\beta^{\frac{1}{3}}$, namely
\begin{equation}\label{thickness}
\mathrm{dist}(\lbrace V=\beta/2\rbrace,\lbrace V=-\beta/2\rbrace)\geq c\beta^{1/3},
\end{equation}
there would be a simpler way. In fact \eqref{thickness} can be directly inferred from \eqref{outer}. This is a simple consequence of the interpolated elliptic estimate (see \cite[Appendix~A]{BBH1})
\begin{equation}\label{interpolestim}
\norm{\nabla V}^2_\infty \leq C \norm{\Delta V}_\infty \norm{V}_\infty,
\end{equation}
which implies, since $|V|\leq\beta$ and 
$|\Delta V|=|H\one_{SC_\beta}| \leq C \beta^{1/3},$
that
\begin{equation}\label{gradientbound}
|\nabla V|\leq C \beta^{2/3} \quad\text{in }\mathcal M.
\end{equation}
Hence, for any $x_\pm\in \lbrace V=\pm\beta/2\rbrace$ and any arc-length parametrized curve $\gamma(s)$, ($0\leq s\leq \ell$) going from $x_-$ to $x_+$, it holds
\begin{equation*}
\beta =V(x_+)-V(x_-) = \int_0^\ell \nabla V(\gamma(s))\cdot\gamma'(s) \, ds \leq  C \beta^{2/3}\ell,
\end{equation*}
so that the length of $\gamma$ satisfies $\ell\geq c\beta^{1/3}$, which proves \eqref{thickness}.
\end{remark}

Next we turn to the proof of Theorem~\ref{thm_scbeta}.

\begin{proof}[Proof of Theorem~\ref{thm_scbeta}:]
We will construct, for small enough $\beta$, bounded functions $H_1\leq H\leq H_2$, and comparison functions $V_1$ and $V_2$ of regularity $W^{2,\infty}$, satisfying for $j=1,2$,
\begin{equation}\label{eqVj}
\begin{gathered}
\Delta V_j=H_j \one_{\abs{V_j}<\beta/2},\\
 \abs{V_j}\leq \beta/2,\quad H_j\geq 0 \text{ in }\lbrace V_j=-\beta/2\rbrace,\quad H_j\leq 0\text{ in }\lbrace V_j=\beta/2\rbrace,
 \end{gathered}
\end{equation}
and the bounds
\begin{equation}\label{boundsVj}
\left\lbrace d\leq \frac 1C \beta^{1/3}\right\rbrace \subset \left\lbrace \abs{V_j}<\beta/2\right\rbrace \subset \left\lbrace d\leq C\beta^{1/3} \right\rbrace.
\end{equation}
By Lemma~\ref{lem_freebound}, \eqref{eqVj} implies that $V_j$ solves the obstacle problem \eqref{obstacleH} with $H=H_j$. Therefore we may apply the comparison principle for two-sided obstacle problems (Lemma~\ref{lem_comparison}) to conclude that $V_1\geq V \geq V_2$. In view of the bounds \eqref{boundsVj} satisfied by $V_1$ and $V_2$, this obviously implies that the superconductivity region satisfies the bounds \eqref{boundsscbeta}.

The rest of the proof is devoted to constructing $V_1$ and $V_2$. To this end we introduce good local coordinates in a neighborhood of $\Sigma=\lbrace H=0\rbrace$. Recall that, thanks to the nondegeneracy assumption \eqref{Hnondegen}, $\Sigma$ is a finite union of closed smooth curves. Let us fix one of them, $\Gamma$, together with an arc-length parametrization of it:
\begin{equation*}
\Gamma =\left\lbrace \gamma(x) \colon x\in\R/\ell\mathbb Z\right\rbrace,\quad \abs{\gamma'(x)}=1.
\end{equation*}
Let us also fix a smooth normal vector $\nu(x)$ to $\Gamma$ on $\mathcal M$, that is
\begin{equation*}
\nu(x)\in T_{\gamma(x)}\mathcal M,\quad \abs{\nu}=1,\quad \nu\cdot\gamma'=0,
\end{equation*}
and impose that $\nu(x)$ points in the direction of $\lbrace H>0\rbrace$ (since $H<0$ on one side of $\Gamma$ and $H<0$ on the other side). We introduce Fermi coordinates along $\Gamma$: for small enough $\delta$, the map
\begin{equation*}
\R/\ell\mathbb Z \times (-\delta,\delta)\to\mathcal M,\quad (x,y)\mapsto \exp_{\gamma(x)}(y\nu(x)),
\end{equation*}
is a diffeomorphism. It defines local coordinates $(x,y)$ on $\mathcal M$ in a neighborhood of $\Gamma$, in which the Laplace operator has the form
\begin{equation}\label{laplacefermi}
\Delta =\frac 1f\left(\partial_y f\partial_y +\partial_x f^{-1} \partial_x\right),
\end{equation}
where $f(x,y)=1-y\kappa(x,y)$ for some smooth function $\kappa$. Note that $y$ is nothing else than the signed distance to $\Gamma$, and in particular $|y|=d$ in a neighborhood of $\Gamma$. While this is a coordinate system that follows well the geometry of a neighborhood of $\gamma , $ we actually need one  where the Laplacian allows us to reduce our construction to a $1d$ problem. To that end let $(x,z)$ be the local coordinates where
\begin{equation}\label{zcoord}
z=y+\frac 12 y^2\kappa(x,y).
\end{equation}
Clearly the map $(x,y)\mapsto (x,z)$ is a diffeomorphism for small enough $y$, so that $(x,z)$ define indeed local coordinates on $\mathcal M$.  The reason for using the coordinates $(x,z)$ is that the Laplace operator is then approximately
\begin{equation*}
\Delta \approx \partial_x^2 + \partial_z^2,
\end{equation*}
which will allow us to obtain nice bounds for functions depending only on $z$.

Note that, since we choose the normal vector $\nu$ to point in the direction of  $\lbrace H>0\rbrace$, and since $\abs{\nabla H}\geq c>0$ in a neighborhood of $\Gamma$ thanks to the nondegeneracy assumption \eqref{Hnondegen}, it holds
\begin{equation*}
\partial_z H \geq c>0,\quad \abs{z}<\delta.
\end{equation*}
On the other hand, $\nabla H$ is bounded, so that there exist $C\geq c>0$ such that
\begin{equation}\label{Hz}
Cz\one_{z<0} + cz\one_{z>0} \leq H \leq cz\one_{z<0} + Cz\one_{z>0},\qquad \abs{z}<\delta.
\end{equation}

Next we concentrate on the construction of $V_1$ ($H_1$ will be defined accordingly). Away from the set $\Sigma$, we simply define
\begin{equation}\label{V1away}
V_1=-\mathrm{sign}(H)\beta/2 \quad\text{in }\lbrace d>\delta/2\rbrace.
\end{equation}
The interesting part is of course what happens near $\Sigma$. Near each of the smooth curves $\Gamma\subset\Sigma$, we will look for $V_1$ in the form $V_1=v(z)$, where $v$ is a $W^{2,\infty}$ function satisfying
\begin{equation}\label{V1near}
v(z)=\begin{cases}
\beta/2 & \text{ for }z<-\eta_-,\\
-\beta/2 & \text{ for }z>\eta_+,
\end{cases}
\end{equation}
for some parameters $\eta_\pm>0$ that will depend on $\beta$. A straightforward computation using \eqref{laplacefermi} and \eqref{zcoord} shows that
\begin{equation}\label{DeltaV1}
\Delta V_1 =v''(z) + z \left(g_1(x,z) v''(z) + g_2(x,z) v'(z)\right),
\end{equation}
where $g_1$ and $g_2$ are bounded functions. We are going to define in $(-\eta_-,\eta_+)$ the function $v$ so that 
\begin{equation}\label{boundsv}
v''\leq 2Cz\one_{z<0} + \frac{c}{2}z\one_{z>0},\quad |v'|= o(\beta),\; |v''|= o(\beta).
\end{equation}
We then define $H_1$ in $(-\eta_{-} , \eta_+)$ simply as $\Delta V_1 .$
Thus, recalling 
\eqref{Hz}, we will have, for small enough $\beta>0$,
\begin{equation}
\Delta V_1 = H_1\one_{|V_1|<\beta/2}\qquad\text{with }H_1\leq H\;\text{in }\lbrace -\eta_-<z<\eta_+\rbrace.
\end{equation}
It is then straightforward to extend $H_1$ to a function defined on $\mathcal M$, such that $H_1\leq H$, and having the same sign as $H$ outside of $\lbrace -\eta_-<z<\eta_+\rbrace$. The resulting $H_1$ and $V_1$ satisfy \eqref{eqVj}. 

Thus it remains to show that we can indeed define $v(z)$ in $\lbrace -\eta_-<z<\eta_+\rbrace$, satisfying the bounds \eqref{boundsv}. We look for $v$ in the form
\begin{equation}\label{formv}
v(z)=\begin{cases}
v_-(z) &\text{ for }-\eta_-<z<0,\\
v_+(z) &\text{ for }0<z<\eta_+,
\end{cases}
\quad\text{with }v_\pm(z)\text{ polynomial.}
\end{equation}
First of all, for $v$ to be of class $W^{2,\infty}$ around the points $\pm\eta_\pm$, we should impose
\begin{equation}\label{regv}
v_-(-\eta_-)=\beta/2,\quad v_+(\eta_+)=-\beta/2,\quad v_-'(-\eta_-)=v_+'(\eta_+)=0.
\end{equation}
Thus we take $v_\pm$ to be of the form
\begin{equation}\label{v+v-}
\begin{aligned}
v_-(z)&=(z + \eta_-)^2(A_-z+B_-) +\frac{\beta}{2} \\
&= A_-z^3 + (B_-+2\eta_-A_-)z^2 + (2\eta_-B_- +\eta_-^2A_-)z +\eta_-^2B_-+\frac{\beta}{2} ,\\
v_+(z)&=(z-\eta_+)^2 (A_+z +B_+) -\frac\beta 2 \\
&= A_+z^3 + (B_+-2\eta_+A_+)z^2 +(-2\eta_+B_+ +\eta_+^2 A_+)z +\eta_+^2 B_+ -\frac\beta 2.
\end{aligned}
\end{equation}
For $v$ to be of class $W^{2,\infty}$ around $z=0$, we have to impose
\begin{equation}\label{condv0}
\begin{gathered}
\eta_-^2B_-+\frac{\beta}{2} = \eta_+^2 B_+ -\frac\beta 2,  \\
2\eta_-B_- +\eta_-^2A_- =-2\eta_+B_+ +\eta_+^2 A_+. 
\end{gathered}
\end{equation}
We also need to ensure that
\begin{equation}\label{v''}
v''\leq 2Cz\one_{z<0} + \frac{c}{2}z\one_{z>0},
\end{equation}
so we impose
\begin{equation}\label{condv''}
6A_- = 2C,\quad 6A_+ = \frac c2,\quad B_-+2\eta_-A_- = B_+-2\eta_+A_+ =0,
\end{equation}
so that we even have an equality in \eqref{v''}.
Plugging \eqref{condv''} into \eqref{condv0}, we find
\begin{equation}\label{condeta}
\frac c6 \eta_+^3 + \frac{2C}{3}\eta_-^3 =\beta,\quad 4C\eta_-^2 = c\eta_+^2,
\end{equation}
which leads us to choose
\begin{equation}\label{eta}
\eta_\pm = \alpha_\pm\beta^{1/3},
\end{equation}
where $\alpha_\pm>0$ are the solutions of
\begin{equation*}
4C\alpha_-^2 = c\alpha_+^2,\quad \frac c6 \alpha_+^3 + \frac{2C}{3}\alpha_-^3=1.
\end{equation*}
With $A_\pm$, $B_\pm$ and $\eta_\pm$ chosen as in \eqref{condv''}-\eqref{eta}, the function $v$ is of class $W^{2,\infty}$ and satisfies \eqref{v''}. Moreover, it is straightforward to check that
\begin{equation*}
|v'|+|v''|\leq C\beta^{1/3} \quad\text{ in }(-\eta_-,\eta_+),
\end{equation*}
so that \eqref{boundsv} is satisfied, which concludes the construction of $V_1$ satisfying \eqref{eqVj}. On the other hand $V_1$ obviously satisfies \eqref{boundsVj} since
\begin{equation*}
\lbrace \abs{V_1}<\beta/2\rbrace = \lbrace -\eta_- < z<\eta_+ \rbrace.
\end{equation*}

We omit the construction of $V_2$, which is completely similar to the one just performed.
\end{proof}

\section{Intermediate regimes}\label{s_interm}

As discussed in the introduction (Section~\ref{s_intro}), in the present section we want to understand the transitions occurring as $\beta$ decreases from $\beta_c$ to 0, when the set $\lbrace H=0\rbrace$ has more than one connected component.

In Section~\ref{ss_symcase} we study in detail a special case with rotational symmetry along a vertical axis, to provide some insight into the transition from the vortexless state to the zero solution. The reason to restrict to this setting is that it encapsules, what we believe are, the most interesting changes in the superconducting set that can occur. 

On the one hand, once we drop the assumption of rotational symmetry, changes in $H$ inside the sample could lead to arbitrarily intricate solutions to the obstacle problem for different values of $\beta,$ so a general theorem is not available. On the other hand the symmetries we consider highlight many model situations with remarkable properties.  One of these is the striking phenomenon that some parts of the free boundary may freeze: that is, remain constant with respect to $\beta$, for $\beta$ in some interval. In Section~\ref{ss_freez} we generalize this observation to the general, non-symmetric case. 

As mentioned earlier, a generalization of the other properties is precluded due to the wide variety of solutions one could construct, having the freedom to choose both $H$ and $\mathcal{M}.$ Nevertheless, we believe that under some more restrictive assumptions, in particular fixing the topology of the level sets of $H,$ one could extend the result on existence of the transitions observed in Proposition~\ref{prop_regimes1d}, however the role of the integral conditions on $I_{\pm}, J$ is not so easily transferable or even identifiable anymore.

\subsection{Detailed study of a symmetric case}\label{ss_symcase}

Here we consider a surface of revolution of the form
\begin{equation*}
\mathcal M =\left\lbrace (\rho(\phi)\cos \theta,\rho(\phi)\sin\theta,z(\phi))\colon \phi\in [0,\pi],\,\theta\in[0,2\pi] \right\rbrace,
\end{equation*}
where $\rho$ and $z$ are smooth functions linked by the relation 
\begin{equation*}
z(\phi)\tan\phi=\rho(\phi),
\end{equation*}
and satisfying $\rho(0)=\rho(\pi)=0$, $\rho>0$ in $(0,\pi)$,  $z'(0)=z'(\pi)=0$, and
\begin{equation*}
\gamma:=\sqrt{(\rho')^2+(z')^2}\geq c>0.
\end{equation*}
The volume form on such $\mathcal M$ is $d\mathcal H^2_{\mathcal M}=\rho\gamma d\theta d\phi$.

The induced magnetic potential $\mathbf A$ on $\mathcal M$ is also assumed to be symmetric, of the form
\begin{equation*}
\mathbf A = a(\phi) d\theta = \frac{a(\phi)}{u(\phi)} \hat e_\theta,
\end{equation*}
and we make the following assumptions on the functions $a$:
\begin{itemize}
\item[(a1):] $a(0)=a(\pi)=0$, and $a>0$ in $(0,\pi)$.
\item[(a2):] $a'>0$ in $(0,\phi_1)$ and $(\phi_2,\phi_3)$ and $a'<0$ in $(\phi_1,\phi_2)$ and $(\phi_3,\pi)$, for some $0<\phi_1<\phi_2<\phi_3<\pi$.
\end{itemize}
The function $a(\phi)$ hast two local maxima $a_1=a(\phi_1)$ and $a_3=a(\phi_3)$, and one local minimum $a_2=a(\phi_2)$. To simplify notations to come, we assume in addition that $a_1<a_3$. See Figure~\ref{figa1}.

\begin{figure}[ht]
\begin{center}
\includegraphics[width=6cm]{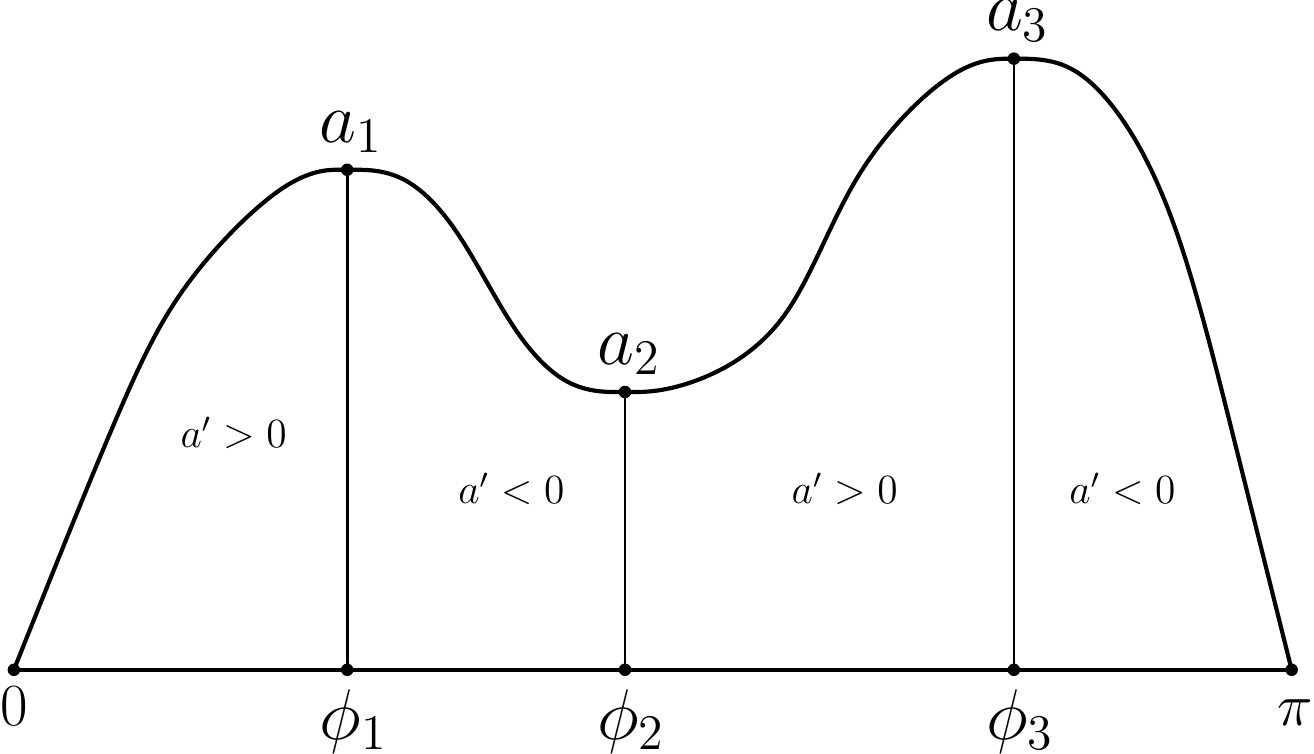}
\caption{The shape of $a(\phi)$.}
\label{figa1}
\end{center}
\end{figure}

\begin{remark}\label{rem_unifhex}
The case, presented in the introduction (Section~\ref{s_intro}), of a uniform external magnetic field $\mathbf H^e =\mathbf{e_z}$ corresponds to $a=u^2/2$.
\end{remark}

In that setting, the functions $H$ and $*F$ are also axially symmetric: they depend only on $\phi$ , and are given by
\begin{equation*}
H=\frac{a'}{\rho\gamma},\quad  (*F)'= a\frac{\gamma}{\rho}.
\end{equation*}

By uniqueness (up to a possible additive constant), the solution of the obstacle problem \eqref{obstacleH} is also rotationally symmetric: it holds $V=v(\phi)$. Since $V\in C^1(\mathcal M)$, the function $v$ should satisfy
\begin{equation*}
v\in C^1([0,\pi]),\quad v'(0)=v'(\pi)=0.
\end{equation*} 
Moreover, the free boundary problem \eqref{freebound} becomes
\begin{equation}\label{1dfreebound}
\left\lbrace
\begin{aligned}
\abs{v}&\leq \beta/2 &\text{in }[0,\pi],\\
\left(\rho\gamma^{-1} v'-a\right)'&=0 & \text{in }\lbrace |u|<\beta/2\rbrace, \\
a'&\geq 0 & \text{in }\lbrace v=-\beta/2\rbrace,\\
a'&\leq 0 &\text{in }\lbrace v=\beta/2\rbrace.
\end{aligned}
\right.
\end{equation}

We investigate, for $\beta<\beta_c$, the changes in the shape of the superconducting set $SC_\beta=\lbrace \abs{v}<\beta/2\rbrace$.
 The critical values at which that shape changes depend on the values of integrals $\int a\,\gamma\rho^{-1} d\phi$ on some intervals related to the level sets of $a(\phi)$.
That is why we start by fixing some notations concerning the level sets of $a(\phi)$. There are three different cases, depicted in Figure~\ref{figlevela}:
\begin{itemize}
\item For $\alpha\in (0,a_2)$, $\lbrace a=\alpha\rbrace =\lbrace \phi_- < \phi_+\rbrace$.
\item For $\alpha\in (a_2,a_1)$, $\lbrace a=\alpha\rbrace =\lbrace \phi_- < \psi_+ < \psi_- < \phi_+\rbrace$.
\item For $\alpha\in (a_1,a_3)$, $\lbrace a=\alpha\rbrace = \lbrace \psi_- < \phi_+\rbrace$. 
\end{itemize}
The functions $\phi_\pm(\alpha)$, $\psi_\pm(\alpha)$ are continuous on their intervals of definition.

For $\alpha\in (a_2,a_1)$, we define
\begin{equation}\label{defIJ}
\begin{gathered}
I_-(\alpha)=\int_{\phi_-}^{\psi_+} (a-\alpha)\frac\gamma\rho\, d\phi,\quad I_+(\alpha)=\int_{\psi_-}^{\phi_+} (a-\alpha)\frac\gamma\rho\, d\phi,\\
J(\alpha)=-\int_{\psi_+}^{\psi_-} (a-\alpha)\frac\gamma\rho\, d\phi.
\end{gathered}
\end{equation}

Those integrals corresponds to ``weighted'' areas of the regions depicted in Figure~\ref{figIJ}, with respect to the measure $\gamma\rho^{-1} d\phi$. Note that both the integrands and the intervals of integration depend on $\alpha$.

We identify a critical value of $\alpha$ with respect to these integrals.
\begin{lemma}\label{critalpha}
There exists $\alpha_*\in (a_2,a_1)$ such that:
\begin{itemize}
\item for $a_2<\alpha<\alpha_*$, $J<\min(I_\pm)$.

\item for $\alpha_*<\alpha<a_1$, $\min(I_\pm) < J$.
\end{itemize}
\end{lemma}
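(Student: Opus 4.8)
\textbf{Proof proposal for Lemma~\ref{critalpha}.}

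The plan is to study the three quantities $I_-(\alpha)$, $I_+(\alpha)$, $J(\alpha)$ as functions on the interval $(a_2,a_1)$ and to establish monotonicity together with the right boundary behavior at the two endpoints $\alpha\to a_2^+$ and $\alpha\to a_1^-$. First I would record the elementary monotonicity in $\alpha$. Differentiating under the integral sign, and using that $a-\alpha$ vanishes at the endpoints $\phi_-,\psi_\pm,\phi_+$ of the respective intervals (so the boundary terms coming from $\alpha$-dependence of the limits of integration drop out), one gets
\begin{equation*}
\frac{d}{d\alpha} I_\pm(\alpha) = -\int (\text{interval})\frac{\gamma}{\rho}\, d\phi < 0,\qquad
\frac{d}{d\alpha} J(\alpha) = +\int_{\psi_+}^{\psi_-}\frac{\gamma}{\rho}\,d\phi > 0,
\end{equation*}
since $\gamma/\rho>0$ on $(0,\pi)$ and all intervals are nonempty for $\alpha\in(a_2,a_1)$. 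Hence $\alpha\mapsto J(\alpha)-\min(I_+(\alpha),I_-(\alpha))$ is the pointwise maximum of the two continuous strictly increasing functions $J-I_+$ and $J-I_-$, so it is itself continuous and strictly increasing on $(a_2,a_1)$.

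Next I would analyze the endpoints. As $\alpha\to a_1^-$, the level $\alpha$ approaches the local maximum value $a_1=a(\phi_1)$, so $\phi_-\to\phi_-(a_1)$ and $\psi_+\to\phi_1$, $\psi_-\to\phi_1$; thus the interval $(\psi_+,\psi_-)$ of the $J$-integral shrinks to the point $\phi_1$ and $J(\alpha)\to 0$, while $I_-(\alpha)$ tends to $\int_{\phi_-(a_1)}^{\phi_1}(a-a_1)\gamma/\rho\,d\phi>0$ (the integrand is strictly negative times... wait --- $a-a_1<0$ there, so actually $I_-(a_1^-)<0$; similarly $I_+(a_1^-)=\int_{\phi_1}^{\phi_+(a_1)}(a-a_1)\gamma/\rho\,d\phi<0$). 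In any case $\min(I_\pm)(a_1^-)<0=J(a_1^-)$, so $J>\min(I_\pm)$ near $a_1$. As $\alpha\to a_2^+$, the level approaches the local minimum $a_2=a(\phi_2)$, so $\psi_+\to\phi_2$ and $\psi_-\to\phi_2$ from opposite sides; now it is the intervals $(\phi_-,\psi_+)$ and $(\psi_-,\phi_+)$ defining $I_\pm$ that... no: $(\psi_+,\psi_-)$ does \emph{not} shrink, rather $\psi_+,\psi_-\to\phi_2$ and the $J$-interval $(\psi_+,\psi_-)$ collapses. Let me restate: at $\alpha=a_2$ the two ``inner'' roots $\psi_+,\psi_-$ merge at $\phi_2$, so $J(a_2^+)\to 0$, whereas $I_\pm(a_2^+)\to\int(a-a_2)\gamma/\rho$ over the nondegenerate intervals $(\phi_-(a_2),\phi_2)$ and $(\phi_2,\phi_+(a_2))$, on which $a-a_2>0$, hence $I_\pm(a_2^+)>0$. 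Therefore $J<\min(I_\pm)$ near $a_2$. Combining with strict monotonicity of $J-\min(I_\pm)$, there is a unique $\alpha_*\in(a_2,a_1)$ with $J(\alpha_*)=\min(I_+(\alpha_*),I_-(\alpha_*))$, and the two claimed inequalities hold on $(a_2,\alpha_*)$ and $(\alpha_*,a_1)$ respectively.

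The main obstacle I anticipate is making the endpoint limits rigorous, specifically controlling the behavior of the implicitly-defined root functions $\phi_\pm(\alpha)$, $\psi_\pm(\alpha)$ near the critical values $a_1$ and $a_2$. Because $a'(\phi_1)=a'(\phi_2)=0$, the implicit function theorem does not apply directly at those points, and $\psi_\pm$ behave like $\phi_{1,2}\mp c\sqrt{a_{1,2}-\alpha}$ (assuming $a''\neq 0$ there, which follows from assumption (a2) generically, or can be handled by the sign conditions alone). One must check that the $J$-integral genuinely tends to $0$ (this is clear since the integrand is bounded and the interval length is $O(\sqrt{|\alpha-a_{1,2}|})$) and that the $I_\pm$-integrals converge to strictly nonzero limits of the asserted sign; the latter is a dominated-convergence argument once one observes that $\gamma/\rho$ is bounded on any interval staying away from $\phi=0,\pi$, which is the case here since $0<\phi_-<\phi_+<\pi$ for $\alpha$ in a compact subinterval of $(a_2,a_1)$. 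Continuity of $I_\pm,J$ on the open interval is then routine and the uniqueness of $\alpha_*$ follows from strict monotonicity. A minor additional point: strict monotonicity requires each integration interval to have positive length for \emph{every} $\alpha\in(a_2,a_1)$, which is exactly the content of the classification of level sets $\{a=\alpha\}$ recalled before \eqref{defIJ}.
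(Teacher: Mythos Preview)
Your overall strategy---monotonicity of $I_\pm$ and $J$ via differentiation under the integral sign, together with endpoint behavior at $\alpha\to a_2^+$ and $\alpha\to a_1^-$---is exactly the paper's approach, and your analysis at $\alpha\to a_2^+$ is correct. However, your analysis at $\alpha\to a_1^-$ contains a concrete error.

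You write that as $\alpha\to a_1^-$ one has $\phi_-\to\phi_-(a_1)$, $\psi_+\to\phi_1$, and $\psi_-\to\phi_1$, so the $J$-interval $(\psi_+,\psi_-)$ collapses. This misidentifies which pair of roots merges. Recall the ordering $\phi_-<\psi_+<\psi_-<\phi_+$ with $\phi_-\in(0,\phi_1)$, $\psi_+\in(\phi_1,\phi_2)$, $\psi_-\in(\phi_2,\phi_3)$, $\phi_+\in(\phi_3,\pi)$. As $\alpha\uparrow a_1$, the two roots flanking the local maximum at $\phi_1$ are $\phi_-$ and $\psi_+$; it is \emph{they} that merge at $\phi_1$. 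The root $\psi_-$ stays in $(\phi_2,\phi_3)$ and does not approach $\phi_1$. Consequently it is the $I_-$-interval $(\phi_-,\psi_+)$ that collapses, giving $I_-(a_1^-)=0$, while $J(a_1^-)=-\int_{\phi_1}^{\psi_-(a_1)}(a-a_1)\tfrac{\gamma}{\rho}\,d\phi>0$ and $I_+(a_1^-)>0$. This yields $\min(I_\pm)(a_1^-)=0<J(a_1^-)$, which is exactly what the paper asserts (``$I_-(a_1)=0$'').

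A second warning sign: your computation produced $I_\pm(a_1^-)<0$. This is impossible for any $\alpha\in(a_2,a_1)$, because on each of the intervals $(\phi_-,\psi_+)$ and $(\psi_-,\phi_+)$ the function $a$ exceeds $\alpha$ (these intervals straddle the local maxima at $\phi_1$ and $\phi_3$ respectively), so the integrands $(a-\alpha)\gamma/\rho$ are strictly positive in the interior. Once you correct the merging roots, this inconsistency disappears and the rest of your argument (strict monotonicity of $J-\min(I_\pm)$ as the maximum of two strictly increasing functions, plus the intermediate value theorem) goes through.
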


\begin{proof}
It follows from the obvious facts that $J$ is increasing, $I_\pm$ are decreasing, $J(a_2)=0$, $I_-(a_1)=0$, and the functions are continuous.
\end{proof}

\begin{figure}[ht]
\begin{center}
\begin{subfigure}{.4\textwidth}
\includegraphics[width=\textwidth]{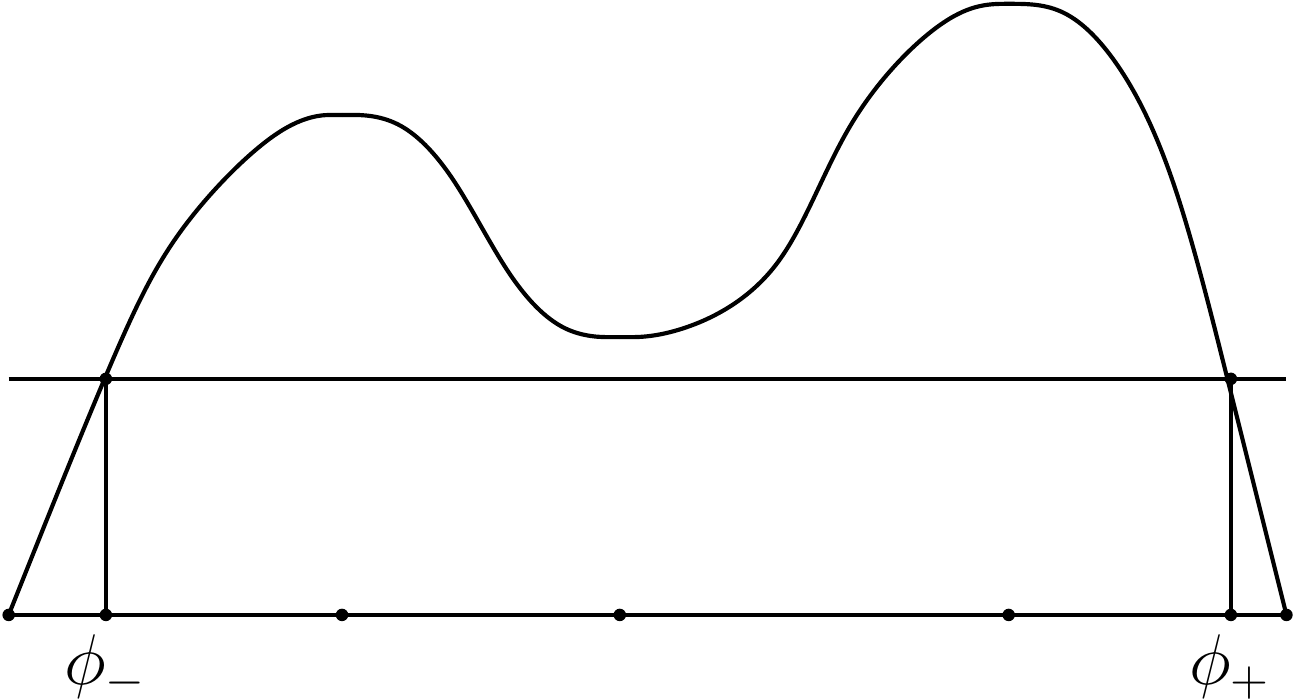}
\caption{$\alpha\in (0,a_2)$}
\label{figa2}
\end{subfigure}
\hspace{.05\textwidth}
\begin{subfigure}{.4\textwidth}
\includegraphics[width=\textwidth]{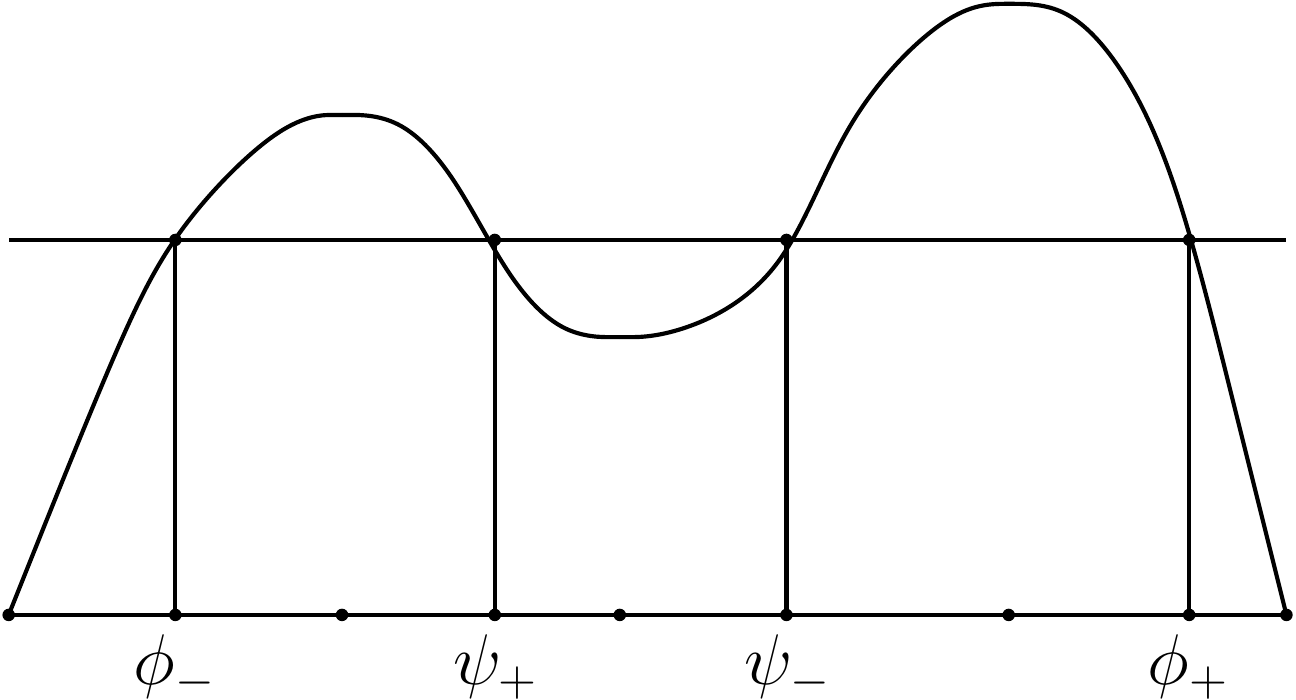}
\caption{$\alpha\in (a_2,a_1)$}
\label{figa3}
\end{subfigure}

\vspace{1em}
\begin{subfigure}{.4\textwidth}
\includegraphics[width=\textwidth]{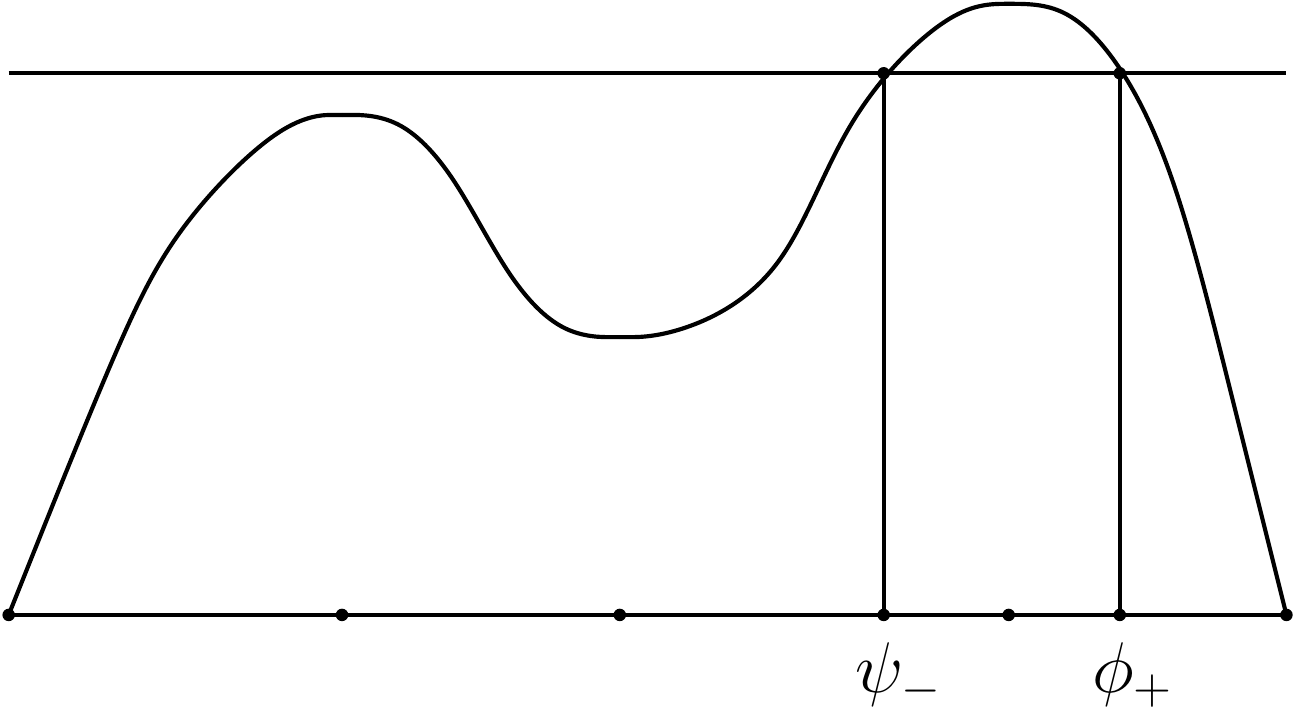}
\caption{$\alpha\in (a_1,a_3)$}
\label{figa4}
\end{subfigure}
\caption{Level sets $\lbrace a=\alpha\rbrace$}
\label{figlevela}
\end{center}
\end{figure}

\begin{figure}[ht]
\begin{center}
\includegraphics[width=6cm]{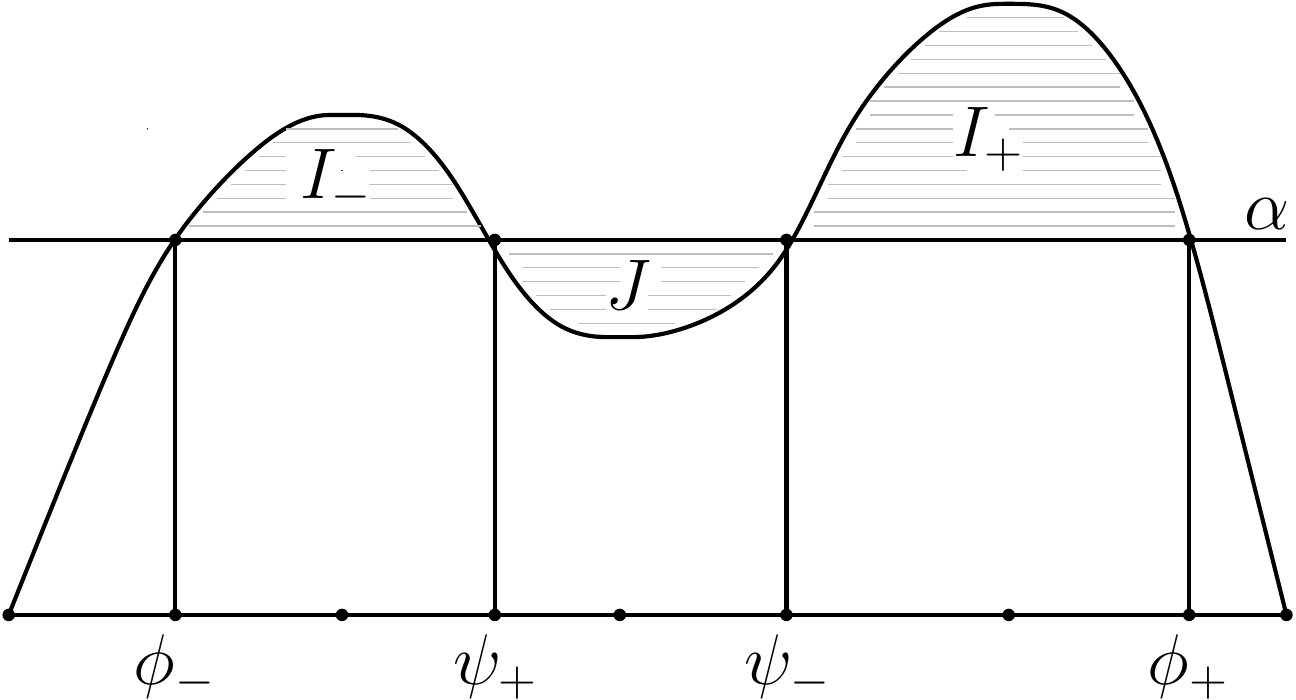}
\caption{The integrals $I_\pm$ and $J$.}
\label{figIJ}
\end{center}
\end{figure}

Now we may give the precise version of Proposition~\ref{loose1d}.
\begin{prop}\label{prop_regimes1d}
Let $\beta_c>\beta^*_1 \geq \beta^*_2 >0$ be defined by
\begin{equation*}
\beta^*_1:=\max(I_\pm(\alpha^*)),\quad \beta^*_2:=\min (I_\pm(\alpha^*)).
\end{equation*}
Then the conclusion of Propostion~\ref{loose1d} holds:
\begin{itemize}
\item For $\beta_c >\beta >\beta^*_1$,  $SC_\beta$ is an interval.
\item For $\beta^*_1>\beta>\beta^*_2$, $SC_\beta$ is the union of two disjoint intervals, one of them independent of $\beta$. 
\item For $\beta^*_2>\beta>0$, $SC_\beta$ is the union of three disjoint intervals.
\end{itemize}
\end{prop}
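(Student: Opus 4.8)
The plan is to study the one-dimensional obstacle problem \eqref{1dfreebound} directly, exploiting the fact that, in the superconducting set, $v$ solves the linear ODE $(\rho\gamma^{-1}v')' = a'$, equivalently $\rho\gamma^{-1}v' - a = \text{const}$. Integrating once more, on any interval $(\phi_0,\phi_1)$ contained in $\{|v|<\beta/2\}$ we get
\begin{equation*}
v(\phi) = v(\phi_0) + \int_{\phi_0}^\phi (a(t)+c)\frac{\gamma(t)}{\rho(t)}\,dt
\end{equation*}
for some constant $c$. The key structural observation is that, since $a$ has the profile of Figure~\ref{figa1}, the contact set $\{v=\beta/2\}$ (where $a'\le 0$) must lie in $[\phi_1,\phi_2]\cup[\phi_3,\pi]$, and $\{v=-\beta/2\}$ (where $a'\ge0$) must lie in $[0,\phi_1]\cup[\phi_2,\phi_3]$. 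Moreover, by the monotonicity Proposition~\ref{prop_monot} combined with Theorem~\ref{thm_scbeta} (for small $\beta$, $SC_\beta$ has exactly three components, one near each zero of $a'$ in $(0,\pi)$, i.e. near $\phi_1,\phi_2,\phi_3$) and with Remark~\ref{rem_obstacle} (for $\beta$ close to $\beta_c$, $SC_\beta$ is connected), we know the number of components is a nonincreasing function of $\beta$ that starts at $1$ and ends at $3$. So the content of the proposition is to locate \emph{precisely} the two transition values and to establish the freezing.

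The main steps I would carry out are as follows. \textbf{Step 1: Reduce to finitely many "types" of solution.} Argue that a rotationally symmetric solution $v$ of \eqref{1dfreebound} with $\max v-\min v=\beta$ is determined by which of the intervals above its two contact sets occupy; given the sign constraints on $a'$, the possible configurations of $SC_\beta$ are: a single interval straddling $\phi_1$ (or eventually all of $(\phi_2,\phi_3)$ absorbed), an interval near $\phi_1$ plus one near $\phi_3$ with a contact-on-$\{v=+\beta/2\}$ interval around $\phi_2$, two intervals (near $\phi_1$ and near $\phi_2$) with a frozen piece, and three intervals. \textbf{Step 2: Write down each candidate explicitly.} For each configuration, the free boundary endpoints $\phi_-,\phi_+,\psi_\pm$ are points of a level set $\{a=\alpha\}$ for a single value $\alpha = -c$, because at a regular free boundary point $v' = 0$ forces $\rho\gamma^{-1}v' = a + c = 0$; and the constraint $\max v - \min v = \beta$ becomes exactly one of the integral identities built from $I_\pm(\alpha), J(\alpha)$. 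For instance, a "two disjoint interval" solution with $SC_\beta$ near $\phi_1$ on one side and a contact interval $\{v=+\beta/2\}$ surrounding $\phi_2\cup\phi_3$... — more precisely, the configuration where $\{v=-\beta/2\}\cap(\phi_2,\phi_3)$ is empty forces, via the computation above, the jump of $v$ across $[\psi_+,\psi_-]$ to equal $J(\alpha)$ and the jumps across $[\phi_-,\psi_+]$ and $[\psi_-,\phi_+]$ to equal $I_-(\alpha)$ and $I_+(\alpha)$ respectively. \textbf{Step 3: Match $\beta$ to the right configuration via $\alpha_*$.} Using Lemma~\ref{critalpha} (the ordering of $J$ versus $\min I_\pm$ switches at $\alpha_*$) one checks: when $\beta>\beta_1^*=\max I_\pm(\alpha_*)$, the only configuration consistent with the constraints is the single interval; when $\beta_2^*<\beta<\beta_1^*$, the consistent configuration is precisely the one with a component near $\phi_1$ that is \emph{pinned to the level set $\{a=\alpha_*\}$} — hence $\beta$-independent — together with a second growing component; when $\beta<\beta_2^*$ all three components are present. \textbf{Step 4: Freezing.} Identify the frozen component: in the middle regime the endpoints $\phi_-(\alpha),\psi_+(\alpha)$ of one component are forced to stay at $\alpha=\alpha_*$ because the competition between the two candidate ways of "spending" the remaining budget $\beta$ is resolved — by Lemma~\ref{critalpha} — in favor of keeping $I_-$ (or $I_+$) frozen at its value $\min I_\pm(\alpha_*)=\beta_2^*$ while $J$ and the other $I$ absorb the rest. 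Then invoke uniqueness of the solution for $\beta<\beta_c$ (Remark~\ref{rem_obstacle}) to conclude that the candidate we constructed \emph{is} the actual $SC_\beta$.

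The main obstacle I anticipate is \textbf{Step 3}, the case analysis showing that exactly one candidate configuration is admissible for each range of $\beta$: one must verify that the competing configurations violate either the obstacle inequalities $\pm a'\le 0$ on the contact sets, or the ordering $\max v-\min v=\beta$, or monotonicity in $\beta$. Concretely the delicate point is showing that, in the interval $(\beta_2^*,\beta_1^*)$, a solution cannot "partially invade" the region near $\phi_2$ continuously — i.e. that the transition at $\beta_1^*$ is such that one free boundary jumps discontinuously (the component near $\phi_2$ nucleates at finite size) while the other, around $\phi_1$, freezes; this is exactly where the strict inequality $J(\alpha_*)=\min I_\pm(\alpha_*)$ combined with the strict monotonicity of $J,I_\pm$ from the proof of Lemma~\ref{critalpha} is used. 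A secondary technical point is handling the behavior at the poles $\phi=0,\pi$, where $\rho\to0$ so the weight $\gamma/\rho$ blows up; one must check the relevant integrals $I_\pm$ converge (they do, since the contact set touches $v=\pm\beta/2$ strictly inside $(0,\pi)$ when $\beta<\beta_c$, so no integral runs into a pole) and that the boundary conditions $v'(0)=v'(\pi)=0$ are compatible with $v\equiv\mp\beta/2$ near the poles.
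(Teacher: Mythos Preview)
Your approach is essentially the same as the paper's: exploit the ODE structure to write down explicit candidate solutions with free-boundary endpoints on level sets $\{a=\alpha\}$, and invoke uniqueness (Remark~\ref{rem_obstacle}) to conclude. However, you are overcomplicating the argument. The paper does \emph{not} carry out your Step~1 and Step~3 (classifying all possible configurations and ruling out the wrong ones). Instead, for each of the three ranges of $\beta$ it simply writes down one explicit piecewise function $v$, checks directly that it satisfies \eqref{1dfreebound}, and stops; uniqueness does the rest. No case analysis ``showing that exactly one candidate configuration is admissible'' is needed, and the appeal to Proposition~\ref{prop_monot} and Theorem~\ref{thm_scbeta} to control the number of components is unnecessary.

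Two points of confusion in your description are worth flagging. First, the transition at $\beta_1^*$ is \emph{continuous}, not a ``nucleation at finite size'': as $\beta\downarrow\beta_1^*$ the single interval $(\phi_-,\phi_+)$ pinches off at the interior point $\psi_-^*$ (where $v$ just touches $-\beta/2$), and the contact interval $(\psi_-^*,\psi_-)$ grows from a point. Second, the frozen component in the middle regime is $(\phi_-^*,\psi_-^*)$ with endpoints on the level $\{a=\alpha^*\}$; it contains both $\phi_1$ and $\phi_2$, not just a neighborhood of $\phi_1$. The reason it freezes is simply that in the explicit construction the level $\alpha^*$ is forced by the matching condition $I_-(\alpha^*)=J(\alpha^*)$ (continuity of $v$ at $\psi_-^*$), which is independent of $\beta$; the remaining budget $\beta$ is absorbed entirely by the second component via $I_+(\alpha)=\beta$.
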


\begin{remark}
It may happen that $I_-(\alpha^*)=I_+(\alpha^*)$. In that case, $\beta^*_1=\beta^*_2$ and the second regime predicted by Proposition~\ref{prop_regimes1d} never happens.
\end{remark}

\begin{proof}[Proof of Proposition~\ref{prop_regimes1d}:]
By uniqueness (see Remark~\ref{rem_obstacle}), it suffices to exhibit, for each regime listed in Proposition~\ref{prop_regimes1d}, a solution of \eqref{1dfreebound} satisfying the desired properties.

\textbf{Case 1:} $\beta\in (\beta^*_1,\beta_c)$. The function
\begin{equation*}
I(\alpha):=\int_{\phi_-}^{\phi_+} (a-\alpha)\frac\gamma\rho\, d\phi,\quad\alpha\in (0,a_1),
\end{equation*}
is continuous, decreasing and satisfies $I(0)=\beta_c$ and $I(\alpha^*)=\beta^*_1$. Therefore there exists a unique $\alpha\in (0,\alpha^*)$ such that $I(\alpha)=\beta$. We define
\begin{equation*}
v(\phi)=
\begin{cases}
-\beta/2 & \text{ for }\phi\in (0,\phi_-),\\
-\beta/2 + \int_{\phi_-}^{\phi} (a-\alpha)\frac\gamma\rho\, d\tilde\phi & \text{ for }\phi\in (\phi_-,\phi_+),\\
\beta/2 & \text{ for }\phi\in (\phi_+,\pi).
\end{cases}
\end{equation*}
The shape of the function $v$ is sketched in Figure~\ref{figu1}.

\begin{figure}[ht]
\begin{center}
\includegraphics[width=8cm]{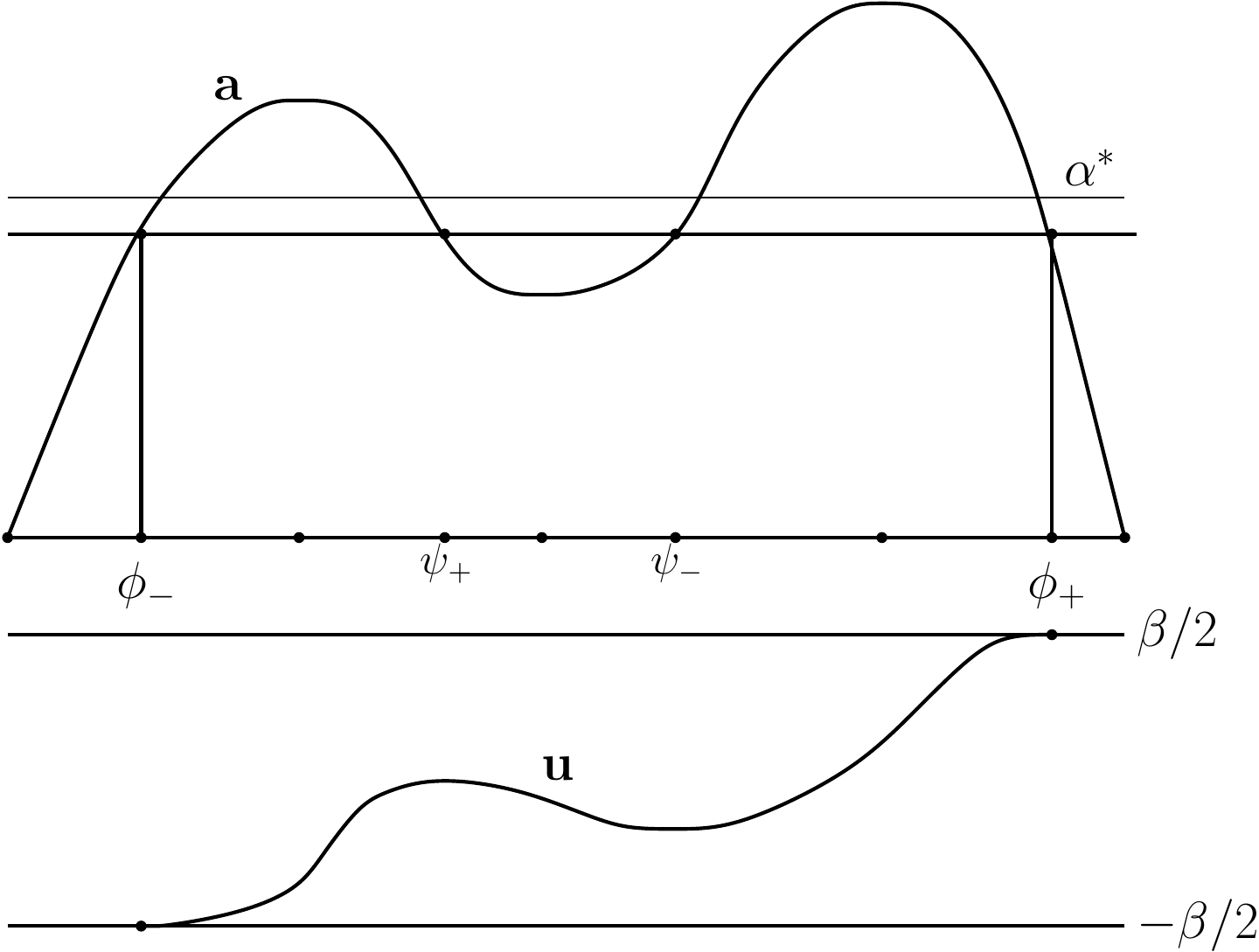}
\caption{The shape of $v$ for $\beta\in(\beta^*_1,\beta_c)$}
\label{figu1}
\end{center}
\end{figure}

The function $v$ is clearly continuous since $\beta$ has been chosen accordingly. Moreover, it holds
\begin{equation*}
v'(\phi_+)=(a(\phi_+)-\alpha)\frac\gamma\rho = v'(\phi_-)=0,
\end{equation*}
since by definition $a(\phi_+)=a(\phi_-)=\alpha$. Hence $v$ is in fact $C^1$ in $[0,\pi]$. Also by definition, $a'\geq 0$ in $(0,\phi_-)$ and $a'\leq 0$ in $(\phi_+,\pi)$. In addition, we clearly have $(\rho\gamma^{-1}v'-a)'=0$ in $(\phi_-,\phi_+)$. To prove that $v$ solves \eqref{1dfreebound}, it only remains to show that $|v|<\beta/2$ in $(\phi_+,\phi_-)$. We consider two different cases, depending on whether $\alpha\in (0,a_2]$ or $\alpha\in (a_2,\alpha^*)$.

If $\alpha\in (0,a_2)$, then (see Figure~\ref{figa2})
\begin{equation*}
v'=(a-\alpha)\frac\gamma\rho>0\quad\text{in }(\phi_-,\phi_+),
\end{equation*}
so that $v$ is increasing on $(\phi_-,\phi_+)$ and it clearly holds $|v|<\beta/2$. For $\alpha=a_2$ the derivative $v'$ only vanishes at one point and the same conclusion is valid.

If, on the other hand $\alpha\in (a_2,\alpha^*)$, then (see Figure~\ref{figu1})
\begin{equation*}
v'=(a-\alpha)\frac\gamma\rho\begin{cases}
>0 &\text{ in }(\phi_-,\psi_+),\\
<0 & \text{ in }(\psi_+,\psi_-),\\
>0 & \text{ in }(\psi_-,\phi_+).
\end{cases}
\end{equation*}
Therefore it suffices to check that $v(\psi_+)<\beta/2$ and $v(\psi_-)>-\beta/2$. We have, since $I(\alpha)=\beta$ and by definition of $I_\pm$ and $J$ (see Figure~\ref{figIJ}),
\begin{equation*}
\begin{split}
v(\psi_+)-\beta/2 & = I_-(\alpha)-\beta = I_-(\alpha)-I(\alpha)=J(\alpha)-I_+(\alpha),\\
v(\psi_-)+\beta/2 & = I_-(\alpha)-J(\alpha).
\end{split}\end{equation*}
Since $\alpha<\alpha^*$ we find indeed (by definition of $\alpha^*$) that $v(\psi_+)<\beta/2$ and $v(\psi_-)>-\beta/2$, and in that case also we conclude that $v$ solves the free boundary problem
\eqref{1dfreebound}.

\textbf{Case 2:} $\beta\in (\beta^*_2,\beta^*_1)$. We treat the case where $\min (I_\pm(\alpha^*))=I_-(\alpha^*)$. Thus $\beta^*_1=I_+(\alpha^*)$ and $\beta^*_2=I_-(\alpha^*)$. The other case can be dealt with similarly. 

The function $I_+(\alpha)$ is continuous and decreasing on $(a_2,a_3)$ and satisfies $I_+(\alpha^*)=\beta^*_1$ and $I_+(a_3)=0<\beta^*_2$ (see Figure~\ref{figIJ}). Therefore there exists $\alpha>\alpha^*$ such that $I_+(\beta)=\alpha$. We denote by $\psi_-$ and $\phi_+$ the two points of $\lbrace a=\alpha \rbrace \cap (\phi_2,\pi)$, and by $\phi_-^*<\psi_+^*<\psi_-^*$ the three  points of $\lbrace a=\alpha^*\rbrace \cap (0,\phi_3)$ (as in Figure~\ref{figu2} below). Note that, since $\alpha>\alpha^*$, $\psi_-^*<\psi_-$. Next we define
\begin{equation*}
v(\phi)=
\begin{cases}
-\beta/2 &\text{ for }\phi\in (0,\phi_-^*),\\
-\beta/2 +\int_{\phi_-^*}^{\phi}(a-\alpha^*)\frac\gamma\rho\, d\tilde\phi &\text{ for }\phi\in (\phi_-^*,\psi_-^*),\\
-\beta/2 & \text{ for }\phi\in (\psi_-^*,\psi_-),\\
-\beta/2 +\int_{\psi_-}^{\phi}(a-\alpha)\frac\gamma\rho\, d\tilde\phi & \text{ for }\phi\in (\psi_-,\phi_+),\\
\beta/2 & \text{ for }\phi\in(\phi_+,\pi).
\end{cases}
\end{equation*}
The shape of the function $v$ is sketched in Figure~\ref{figu2}.

\begin{figure}[ht]
\begin{center}
\includegraphics[width=8cm]{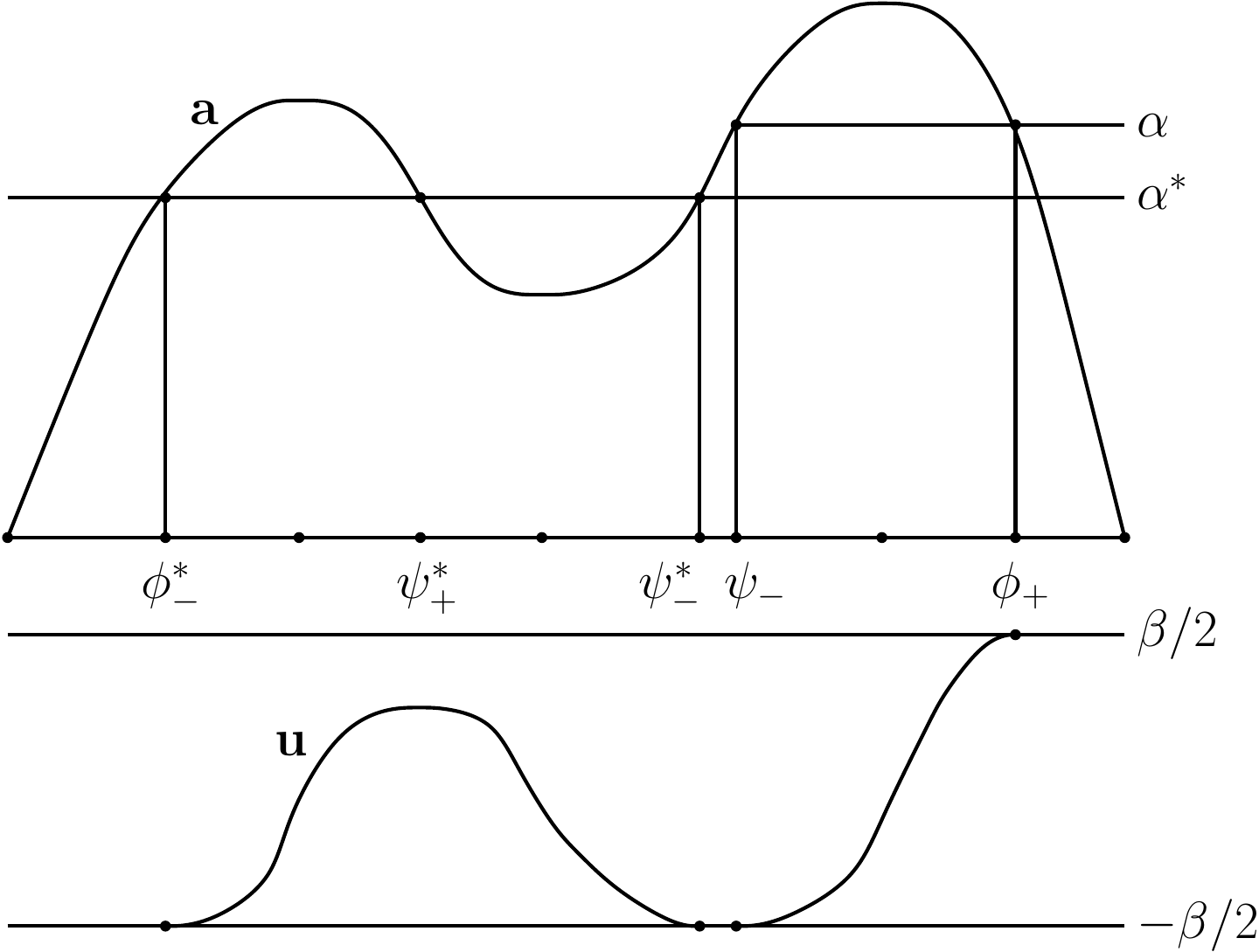}
\caption{The shape of $v$ for $\beta\in(\beta^*_2,\beta^*_1)$}
\label{figu2}
\end{center}
\end{figure}

Continuity of $v$ at $\psi_-^*$ is ensured by the fact that $I_-(\alpha^*)=J(\alpha^*)$. Continuity at $\phi_+$ by $I_+(\alpha)=\beta$. The function $v$ is $C^1$ because the facts that $a(\phi_-^*)=a(\psi_-^*)=\alpha^*$ and $a(\psi_-)=a(\phi_+)=\alpha$ guarantee that $v'(\phi_-^*)=v'(\psi_-^*)=v'(\psi_-)=v'(\phi_+)=0$. The sign of $a'$ is positive in $(0,\phi_-^*)$ and $(\psi_-^*,\psi_-)$ and negative in $(\phi_+,\pi)$.  In the two intervals $(\phi_-^*,\psi_-^*)$ and $(\psi_-,\phi_+)$, the equation $(\rho\gamma^{-1}v'-a)'=0$ is obviously satisfied, and it remains to check that $|v|<\beta/2$ in those intervals.

Since $v'=(a-\alpha)\gamma\rho^{-1} >0$ in $(\psi_-,\phi_+)$, it clearly holds $|v|<\beta/2$ in $(\psi_-,\phi_+)$.

In the interval $(\phi_-^*,\psi_-^*)$, the sign of $v'$ shows that $v$ attains its minimum at the boundary and its maximum at $\psi_+^*$, and it holds
\begin{equation*}
v(\psi_+^*)-\beta/2=-\beta + I_-(\alpha^*) =-\beta + \beta^*_2 <0.
\end{equation*}
We conclude that $v$ solves the free boundary problem \eqref{1dfreebound}. Moreover, the interval $(\phi_-^*,\psi_-^*)$ clearly does not depend on $\beta$.

\textbf{Case 3:} $\beta\in (0,\beta^*_2)$. 
Since $I_-$ is continuous and decreasing, $I_-(\alpha^*)>\beta^*_2$ and $I_-(a_1)=0$, there exists $\alpha_1>\alpha^*$ such that $I_-(\alpha_1)=\beta$. Similarly, there exist $\alpha_2<\alpha^*$ and $\alpha_3>\alpha^*$ such that $J(\alpha_2)=I_+(\alpha_3)=\beta$. We denote by
\begin{equation*}
0<\phi_-^1<\psi_+^1 <\psi_+^2 <\psi_-^2 <\psi_-^3 <\phi_+^3<\pi
\end{equation*}
the points such that (see Figure~\ref{figu3})
\begin{equation*}
\begin{gathered}
\lbrace a=\alpha_1\rbrace\cap (0,\phi_2)=\lbrace \phi_-^1,\psi_+^1\rbrace,\\
\lbrace a=\alpha_2\rbrace\cap (\phi_1,\phi_3) =\lbrace \psi_+^2,\psi_-^2\rbrace,\\
\lbrace a=\alpha_3\rbrace\cap (\phi_2,\pi)=\lbrace \psi_-^3,\phi_+^3\rbrace.
\end{gathered}\end{equation*}

\begin{figure}[ht]
\begin{center}
\includegraphics[width=8cm]{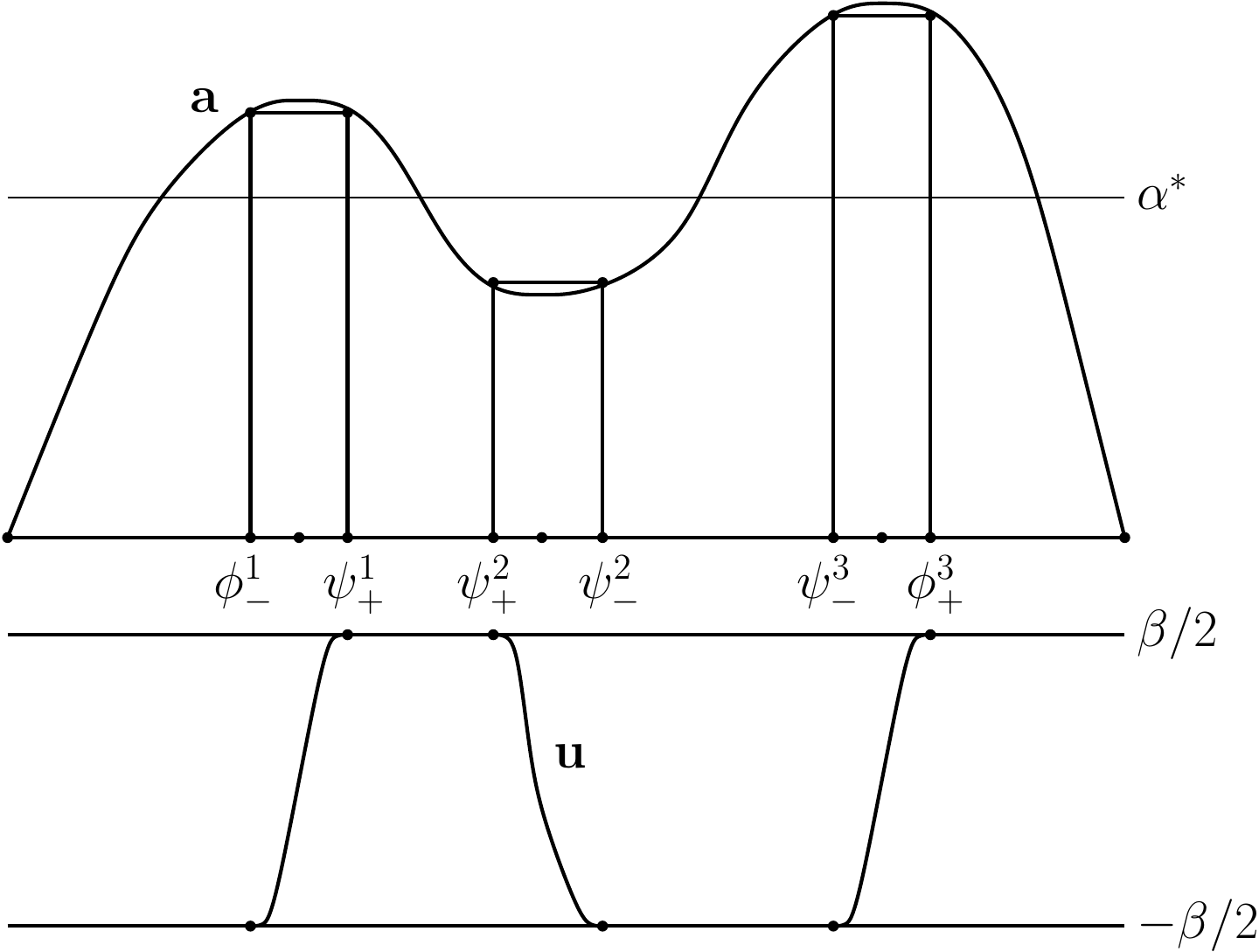}
\caption{The shape of $v$ for $\beta\in(0,\beta^*_2)$}
\label{figu3}
\end{center}
\end{figure}

Then we define
\begin{equation*}
v(\phi)=\begin{cases}
-\beta/2 &\text{ for }\phi\in (0,\phi_-^1)\text{ or }\phi\in (\psi_-^2,\psi_-^3)\\
-\beta/2 +\int_{\phi_-^1}^{\phi}(a-\alpha_1)\frac\gamma\rho\, d\tilde\phi &\text{ for }\phi\in (\phi_-^1,\psi_+^1),\\
\beta/2 & \text{ for }\phi\in (\psi_+^1,\psi_+^2)\text{ or }\phi\in (\phi_+^3,\pi),\\
\beta/2 +\int_{\psi_+^2}^{\phi}(a-\alpha_2)\frac\gamma\rho\, d\tilde\phi & \text{ for }\phi\in (\psi_+^2,\psi_-^2)\\
-\beta/2 +\int_{\psi_-^3}^{\phi}(a-\alpha_3)\frac\gamma\rho\, d\tilde\phi & \text{ for }\phi\in (\psi_-^3,\phi_+^3).
\end{cases}
\end{equation*}
The shape of the function $v$ is sketched in Figure~\ref{figu3}.

As above the $C^1$ regularity of $v$ follows from the definitions of $\alpha_1$, $\alpha_2$ and $\alpha_3$. The sign of $a'$ is positive in $(0,\phi_-^1)\cup (\psi_-^2,\psi_-^3)$ and negative in $(\psi_+^1,\psi_+^2)\cup (\phi_+^3,\pi) $. The equation $(\rho\gamma^{-1}v'-a)'=0$ is satisfied in the three intervals $(\phi_-^1,\psi_+^1)$,  $(\psi_+^2,\psi_-^2)$ and $(\psi_-^3,\phi_+^3)$. Moreover in those intervals, the function $v$ is monotone, hence $|v|<\beta/2$. Therefore $v$ solves the free boundary problem \eqref{1dfreebound}.
\end{proof}

\subsection{`Freezing' of the free boundary}\label{ss_freez}

\begin{prop}\label{prop_freez}
Assume that, for some $\beta_0\in (0,\beta_c)$, one connected component $\omega$ of the superconductivity set $SC_{\beta_0}$ is such that $V_{\beta_0}$ takes the same value on each connected component of $\partial\omega$. Then there exists $\delta>0$ such that
\begin{equation}\label{freez}
SC_\beta \cap\overline \omega  = SC_{\beta_0}\cap\overline\omega =\omega,
\end{equation}
for all $\beta\in (\beta_0-\delta,\beta_0]$.
\end{prop}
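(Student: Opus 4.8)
The plan is to exploit the fact that $\omega$, being a connected component of $SC_{\beta_0}$, is a region on which $V_{\beta_0}$ solves the \emph{unconstrained} problem $\Delta V_{\beta_0}=H$, with boundary data $V_{\beta_0}=c$ constant on $\partial\omega$ (here using the hypothesis). First I would isolate the behavior on $\overline\omega$: by Lemma~\ref{lem_freebound}, at every regular point of $\partial\omega$ one has $V_{\beta_0}=\pm\beta_0/2$ and $\partial V_{\beta_0}/\partial\nu=0$; combined with the hypothesis that $V_{\beta_0}$ is constant along $\partial\omega$, the constant must be $+\beta_0/2$ or $-\beta_0/2$, say $-\beta_0/2$ after possibly changing sign. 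Then the function $w:=V_{\beta_0}+\beta_0/2$ satisfies $\Delta w=H$ in $\omega$, $w=0$ and $\partial w/\partial\nu=0$ on $\partial\omega$, and $0\le w\le\beta_0$ inside. The overdetermined Cauchy data on $\partial\omega$ is what will let us glue a perturbed solution.

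The core construction: for $\beta\in(\beta_0-\delta,\beta_0]$ I would build a candidate solution $V_\beta$ of the global obstacle problem \eqref{obstacleH} that agrees with a shifted copy of $w$ on $\omega$ and agrees with $V_{\beta_0}$ suitably modified outside. Concretely, set $V_\beta:=V_{\beta_0}+(\beta-\beta_0)/2$ on $\overline\omega$ (so that on $\omega$ it ranges in $[(\beta-\beta_0)/2+0,\ldots]$, and on $\partial\omega$ it equals $-\beta/2$ — wait, we need $|V_\beta|\le\beta/2$, and $w$ ranges in $[0,\beta_0]$, so $V_{\beta_0}=w-\beta_0/2$ ranges in $[-\beta_0/2,\beta_0/2]$; the issue is that $\max_\omega w$ could be $\beta_0$, hitting the \emph{other} obstacle). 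Here I would invoke the continuity estimate of Remark~\ref{rem_cont}, $|V_\beta-V_{\beta_0}|\le(\beta_0-\beta)/2$, together with the strict-interiority on compact subsets of $\omega$: away from $\partial\omega$ one has $|V_{\beta_0}|<\beta_0/2$ strictly, and the free boundary $\partial\omega$ is where it touches $-\beta_0/2$ only. For $\delta$ small, $V_\beta$ stays in $(-\beta/2,\beta/2]$ on $\omega$ and still touches $-\beta/2$ exactly on $\partial\omega$, because $\partial\omega$ is stable: $H<0$ cannot occur on $\partial\omega\cap\{V_{\beta_0}=-\beta_0/2\}$...

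Actually the cleaner route is via the comparison principle Lemma~\ref{lem_comparison} and uniqueness (Remark~\ref{rem_obstacle}). By monotonicity (Proposition~\ref{prop_monot}) and continuity (Remark~\ref{rem_cont}), $SC_\beta\subset SC_{\beta_0}$ and $V_\beta\to V_{\beta_0}$ uniformly as $\beta\uparrow\beta_0$; so for $\beta$ close to $\beta_0$, each component of $SC_\beta$ meeting $\omega$ lies inside $\omega$. It remains to show $SC_\beta\cap\omega=\omega$, i.e. the free boundary does not enter $\omega$. I would argue by constructing, directly on $\overline\omega$, the function $\widetilde V:=w-\beta/2$ where $w$ solves $\Delta w=H$ in $\omega$ with $w=(\beta_0-\beta)/2$ on $\partial\omega$ — no, rather: take $w_\beta$ solving $\Delta w_\beta = H\one_{w_\beta>-\beta/2+\ldots}$... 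Let me state it as: extend $V_{\beta_0}|_{\overline\omega}$ to all of $\mathcal M$ by gluing; since on $\partial\omega$ the Cauchy data $(V_{\beta_0},\partial_\nu V_{\beta_0})=(-\beta_0/2,0)$ matches the contact-set boundary data, the function $V_{\beta_0}-(\beta_0-\beta)/2$ on $\overline\omega$, glued with a valid extension outside (possible for $\delta$ small by the structure of $SC_{\beta_0}\setminus\omega$ and continuity), is a $W^{2,p}$ function satisfying the free-boundary characterization \eqref{freebound} of Lemma~\ref{lem_freebound}, hence solves \eqref{obstacleH}; by uniqueness it is $V_\beta$, giving \eqref{freez}.

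The main obstacle is the \textbf{gluing across $\partial\omega$ and the extension to $SC_{\beta_0}\setminus\omega$}: one must check that shifting $V_{\beta_0}$ by the small constant $(\beta-\beta_0)/2$ on $\overline\omega$ and by $-(\beta-\beta_0)/2$... on the other side, while keeping the global function $W^{2,p}$, continuous across $\partial\omega$, within the obstacles $[-\beta/2,\beta/2]$, and still satisfying the sign conditions $H\le 0$ on $\{V=\beta/2\}$, $H\ge0$ on $\{V=-\beta/2\}$. This is delicate because $\partial\omega$ borders both $\omega$ and a region where $V_{\beta_0}\equiv-\beta_0/2$ (by the hypothesis all of $\partial\omega$ is at the same obstacle), so the constant shift is consistent on both sides of $\partial\omega$ there, and the $C^1$-matching is automatic from $\partial_\nu V_{\beta_0}=0$; the genuinely new point to verify is that on the \emph{rest} of $SC_{\beta_0}$ and its complement the shifted function, after a small further modification to restore the obstacle bound where $V_{\beta_0}$ was close to the opposite obstacle, still solves an obstacle problem — and for this I expect to need that $\beta_0<\beta_c$ so $SC_{\beta_0}$ is a proper subset, plus uniform continuity, to choose $\delta$ uniformly. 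I would carry out the details by treating $\overline\omega$ and $\mathcal M\setminus\omega$ separately, solving the (now one-sided near $\partial\omega$) obstacle problem on each and checking the Cauchy data from $\omega$'s side forces the two pieces to match into the global solution.
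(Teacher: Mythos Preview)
Your instinct to shift $V_{\beta_0}$ by $(\beta_0-\beta)/2$ on $\overline\omega$ is exactly right, and so is the observation that the Cauchy data $(V_{\beta_0},\partial_\nu V_{\beta_0})=(-\beta_0/2,0)$ on $\partial\omega$ is what makes the shifted function match the contact value $-\beta/2$. But the route you outline --- building a \emph{global} candidate by gluing the shifted piece on $\overline\omega$ with some extension on $\mathcal M\setminus\omega$, and then invoking uniqueness --- has a real gap: you have no independent description of $V_\beta$ on $\mathcal M\setminus\omega$. The other components of $SC_{\beta_0}$ do \emph{not} have the same boundary behaviour (their boundaries may touch both obstacles), so the constant shift is not consistent there, and ``solving the obstacle problem on each piece and matching Cauchy data'' is circular: it presupposes knowing where the free boundary of $V_\beta$ sits outside $\omega$, which is precisely unknown.

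The paper avoids this entirely by never constructing a global candidate. The actual $V_\beta$ already exists; one only has to identify it on $\overline\omega$. The argument is purely local. Set $\widetilde V_0:=V_{\beta_0}+(\beta_0-\beta)/2$. From the proof of the monotonicity proposition one has the \emph{global} one-sided inequality $V_\beta\le\widetilde V_0$ on $\mathcal M$; no construction is needed for this. Since $V_{\beta_0}=-\beta_0/2$ on $\partial\omega$ (by hypothesis), $\widetilde V_0=-\beta/2$ there, and the obstacle forces $V_\beta\ge-\beta/2$, so $U:=\widetilde V_0-V_\beta$ satisfies $U\ge0$ in $\omega$ and $U=0$ on $\partial\omega$. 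Now take $\delta:=\beta_0/2-\max_{\overline\omega}V_{\beta_0}>0$; for $\beta>\beta_0-\delta$ one has $\widetilde V_0<\beta/2$ on $\overline\omega$, hence $V_\beta<\beta/2$ there, so on $\omega\setminus SC_\beta$ the free-boundary characterization gives $H\ge0$. Therefore
\[
\Delta U=\Delta V_{\beta_0}-\Delta V_\beta=H-H\one_{SC_\beta}=H\one_{\omega\setminus SC_\beta}\ge0\quad\text{in }\omega,
\]
and the maximum principle (applied via $\varphi=\max(U-\varepsilon,0)$, compactly supported thanks to $U\in C(\overline\omega)$ and $U|_{\partial\omega}=0$) yields $U\le0$. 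Hence $U\equiv0$, i.e.\ $V_\beta=\widetilde V_0$ on $\overline\omega$, which gives \eqref{freez} immediately.

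So the missing idea is: do not try to extend or glue; use the global comparison inequality you already have, then run a maximum-principle argument \emph{inside} $\omega$ alone.
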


In Figure~\ref{figfreez} we show a situation corresponding to Proposition~\ref{prop_freez}, with $V=-\beta/2$ on every connected component of $\partial\omega$.

\begin{figure}[ht]
\begin{center}
\includegraphics[width=4.5cm]{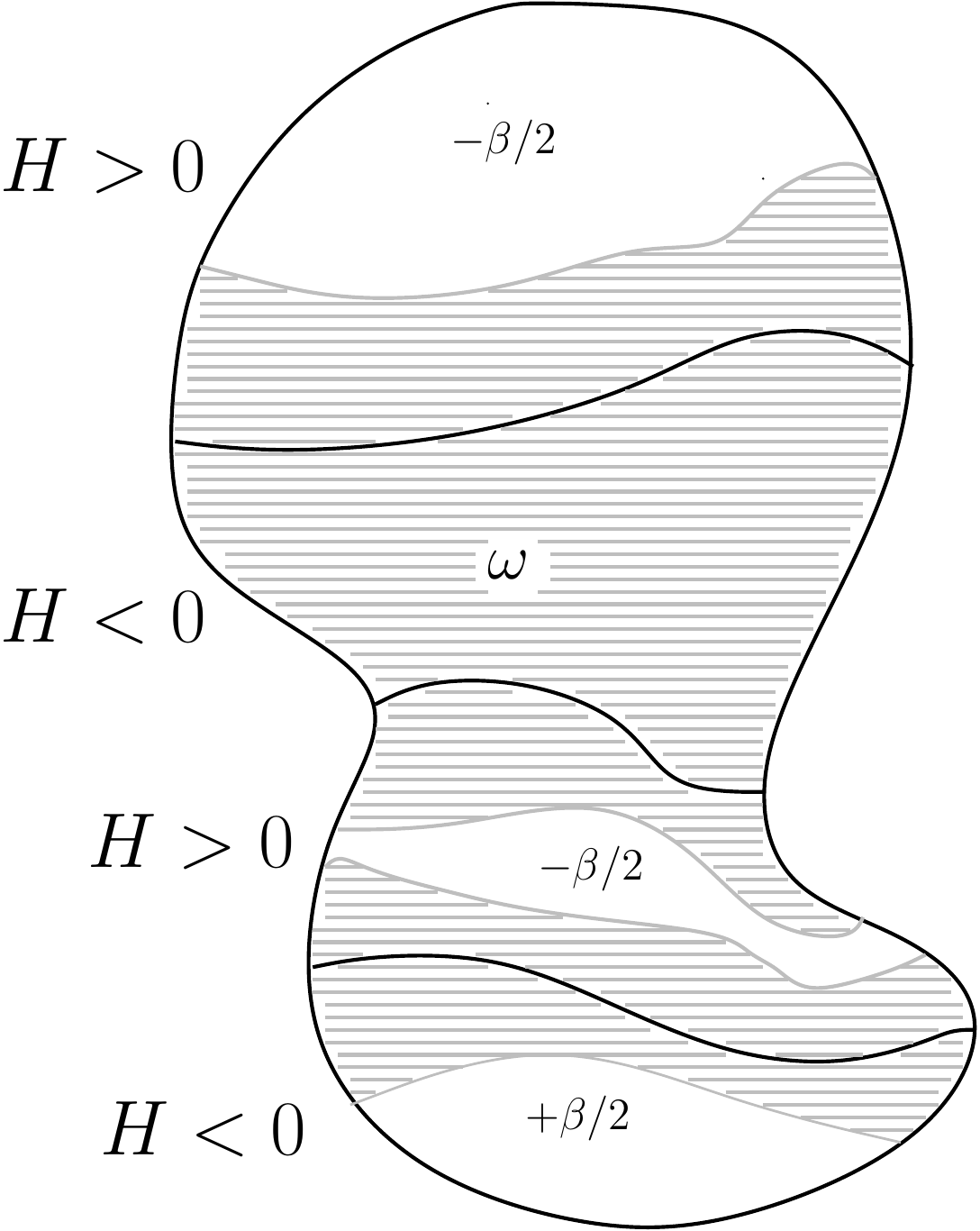}
\caption{An example of the situation of Proposition~\ref{prop_freez}}
\label{figfreez}
\end{center}
\end{figure}

\begin{remark}
The assumption on $\beta_0$ in Proposition~\ref{prop_freez} corresponds exactly to what happens in the symmetric case (Proposition~\ref{prop_regimes1d}) in the regime $\beta_1^*>\beta>\beta_2^*$, where $u(\phi_-^*)=u(\psi_-^*)=-\beta/2$ (Figure~\ref{figu2}).
\end{remark}

\begin{proof}[Proof of Proposition~\ref{prop_freez}:]
We present the proof in the case where $V=-\beta_0/2$ on every connected component of $\partial\omega$. The case $V=\beta_0/2$ on $\partial\omega$ can be dealt with similarly.

Since $V<\beta_0/2$ in $\omega$ and $V=-\beta_0/2$ on $\partial\omega$, it holds
\begin{equation*}
m:=\max_{\overline\omega} V <\beta_0/2,
\end{equation*}
and we define
\begin{equation*}
\delta:=  \frac 12 {\beta_0}-m >0.
\end{equation*}

Let $\beta\in (\beta_0-\delta,\beta_0]$, and define
\begin{equation}\label{tildeV0}
\widetilde V_0 := V_{\beta_0} + \frac{1}{2}(\beta_0-\beta).
\end{equation} 
The definitions of $m$ and $\delta$ ensure that it holds
\begin{equation}\label{ineqtildeV0}
-\beta/2 \leq \widetilde V_0 \leq \frac 12 {\beta_0} -\delta + \frac{1}{2}(\beta_0-\beta) < \beta/2\quad\text{in }\overline\omega.
\end{equation}
We claim that 
\begin{equation}\label{freezV}
V_\beta =\widetilde V_0 \quad\text{in }\overline\omega,
\end{equation}
which obviously implies \eqref{freez}. 

Note that the proof of Proposition~\ref{prop_monot} implies that it always holds
\begin{equation}\label{VleqtildeV}
V_\beta \leq \widetilde V_0 \quad\text{in }\mathcal M.
\end{equation}
Let $\omega_\beta = SC_\beta\cap \omega$, and 
\begin{equation}\label{Unonneg}
U:=\widetilde V_0 - V_\beta \geq 0.
\end{equation}
Note that $U\in C^{1,\alpha}(\overline\omega)$, and $U=0$ on $\partial\omega$ (since, by definition of $\omega$, $\widetilde V_0=0$ on $\partial\omega$). 

Let $\omega':=\omega\cap SC_\beta$. It holds
\begin{equation}\label{DeltaU}
\Delta U = H\one_{\omega\setminus\omega'}\quad\text{ in }\omega.
\end{equation}
From \eqref{ineqtildeV0} and \eqref{VleqtildeV} it follows that
\begin{equation*}
V_\beta <\beta/2 \quad\text{in }\overline \omega.
\end{equation*}  
Therefore, recalling the free boundary formulation \eqref{freebound}, we have $H\geq 0$ in $\omega\setminus\omega'$. In particular \eqref{DeltaU} implies that
\begin{equation*}
\Delta U\geq 0 \quad \text{ in }\omega.
\end{equation*}
Let $\varepsilon>0$ and consider
\begin{equation*}
\varphi:=\max(U-\varepsilon,0)\in H^1(\omega).
\end{equation*}
Recalling that $U\in C(\overline \omega)$ and $U=0$ on $\partial\omega$, we know that  $\varphi$ has compact support inside $\omega$. Thus we may integrate by part (without knowing anything about the regularity of $\partial\omega$) to obtain
\begin{equation*}
\int_\omega |\nabla \varphi|^2 =\int_\omega \nabla\varphi\cdot\nabla U = -\int_\omega \varphi \Delta U \leq 0,
\end{equation*}
and we deduce that $\varphi\equiv 0$ in $\omega$, which implies that $U\leq\varepsilon$ in $\omega$. Letting $\varepsilon\to 0$, we conclude that $U\leq 0$ in $\omega$, which, together with \eqref{Unonneg}, shows that \eqref{freezV} holds.
\end{proof}

\appendix
\section{The mean field approximation}\label{a_meanfield}
 Recall we assume $\mathcal M\subset \mathbb R^3$ is a closed compact surface homeomorphic to a sphere, $\mathbf A$ a $1$-form on $\mathcal M$ such that $\mathbf A=d^*F=*d*F$ for some smooth non constant $2$-form $F$, and $\GM$ the Ginzburg-Landau energy
\begin{equation*}
\GM (\psi)  = \int_{\mathcal M} \left|(\nabla -i h \mathbf A )\psi\right|^2 \, 
+ \frac{\kappa^2}{2}\int_{\mathcal M} (|\psi|^2-1)^2.
\end{equation*}
The parameter $\kappa>0$ is going to tend to $+\infty$, as is the strength of the applied field $h(\kappa)>0$.

If $\psi$ is a critical point of $\GM$, written locally as $\psi=\rho e^{i\varphi}$, then it holds
\begin{equation*}
d(\rho^2(d\varphi-hd^*F))=0.
\end{equation*}
 We deduce that there exists a function $V$ such that
\begin{equation*}
*dV=\rho^2 (hd^*F-d\varphi).
\end{equation*}
The function $V$ is uniquely defined up to an additive constant, which we may fix by imposing $\int_{\mathcal M} V  =0$. The function
\begin{equation*}
\mu = -\Delta (V-h*F) = -\Delta V +H
\end{equation*}
is the vortex density.

In this paper we appeal to a mean field approximation result proved by Sandier and Serfaty in \cite{SS1}.
In our case we also have to handle positive and negative measures $\mu_+, \mu_{-}$ with total zero mass $\mu_+ (\mathcal{M})-\mu_{-}(\mathcal{M})=0 .$ In this appendix we verify that under the additional constraints present in our context, we still have such a reduction.
For an intensity $h(\kappa)$ comparable to $\ln \kappa$, the mean field approximation consists in approximating the problem of minimizing $\GM$ by a limiting problem on the vorticity measure. The result also relates the 

\begin{prop}\label{propmfa}
Assume that $\beta:=\lim_{\kappa\to\infty}\frac{\ln\kappa}{h(\kappa)}\geq 0$ and $h(\kappa)=o(\kappa^2)$. Let $\psi_\kappa$ be a minimizer of $\GM$, and the corresponding $V_\kappa$ be defined as above. Then, up to a subsequence, as $\kappa\to\infty$,
\begin{equation*}
\frac{V_\kappa}{h(\kappa)} \text{ converges to }W_*,
\end{equation*}
weakly in $H^1$ (and strongly in $W^{1,q}$ for $q<2$) where $W_*$ minimizes the energy
\begin{equation*}
E_\beta(W)=\frac 12 \int_{\mathcal M} |\nabla W|^2\, d\mathcal H^2 + \frac{\beta}{2} \|-\Delta W+H\|_{TV},
\end{equation*}
over the set of all $W\in H^1(\mathcal M)$ such that $(-\Delta W+ H)$ is a Radon measure. Here $\|\mu\|_{TV}=|\mu|(\mathcal M)$ denotes the total variation norm of the Radon measure $\mu$.

Moreover, it holds $\GM(\psi_\kappa) = h(\kappa)^2 E_\beta(W_*) + o(h(\kappa)^2)$.
\end{prop}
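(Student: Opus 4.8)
The plan is to follow the scheme of Sandier--Serfaty \cite{SS1}, splitting the statement into a lower bound (via a $\Gamma$-liminf type argument using vortex-ball constructions and the Jacobian estimate) and a matching upper bound (via an explicit test configuration), the only genuinely new point being the total-zero-mass constraint on the vorticity in the upper bound. First I would fix a minimizer $\psi_\kappa$ and record the a priori bound $\GM(\psi_\kappa)\le \GM(\text{test})\le C h^2$, obtained by plugging in a suitable smooth test function (e.g. of the form $e^{i h\phi}$ with $d\phi$ a closed approximation of $\mathbf A$, or simply $\psi\equiv 1$ on most of $\mathcal M$), so that in particular $\|V_\kappa/h\|_{H^1}$ is bounded and we may extract a weak $H^1$ limit $W_*$; compactness in $W^{1,q}$, $q<2$, then comes from Rellich together with the fact that $\mu_\kappa=-\Delta V_\kappa+hH$ has bounded mass, so that $\Delta V_\kappa$ is bounded in measure and $\nabla V_\kappa$ compact in $L^q$ for $q<2$.

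Next I would establish the lower bound $\liminf \GM(\psi_\kappa)/h^2 \ge E_\beta(W_*)$. This is where one uses the standard lower bounds for the Ginzburg--Landau energy in terms of vortices: Jerrard--Sandier ball constructions give, on the set where $|\psi_\kappa|$ is small, a lower bound $\gtrsim \pi |\mu_\kappa|(\mathcal M)\ln\kappa$ (up to lower order terms), while the Jacobian/Hodge decomposition identifies $\mu_\kappa/h$ with the weak limit $-\Delta W_* + H$ as a Radon measure, so that $\liminf \frac{1}{h^2}\cdot(\text{vortex part}) \ge \frac{\beta}{2}\|-\Delta W_*+H\|_{TV}$ after dividing by $h^2$ and using $\ln\kappa/h\to\beta$. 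The remaining ``field energy'' $\int|\nabla_{\mathcal M}-ih\mathbf A\psi|^2$ bounds from below, after the usual splitting into the modulus and the gauge-invariant phase gradient and discarding the vortex cores, the Dirichlet energy $\frac12\int|\nabla V_\kappa|^2/\rho^2$, which by lower semicontinuity gives $\ge \frac12\int|\nabla W_*|^2$ in the limit; adding the two pieces yields $E_\beta(W_*)$.

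For the upper bound $\limsup \GM(\psi_\kappa)/h^2 \le \min E_\beta = E_\beta(W_*)$, I would take an arbitrary competitor $W\in H^1$ with $-\Delta W+H$ a Radon measure, approximate $\mu:=-\Delta W+H$ by a measure $\mu^\varepsilon$ that is a sum of Dirac masses at well-separated points with integer (here $\pm1$) weights and the \emph{same total mass}, i.e. $\mu^\varepsilon(\mathcal M)=0$ (this is possible precisely because $\int\mu=0$), and build $\psi_\kappa$ with those prescribed vortices and modulus $\approx 1$ away from cores of size $\kappa^{-1}$, with phase essentially solving $*d\phi = \mathbf A - *dW/h$; the point is that the topological constraint $\deg(\psi_\kappa/|\psi_\kappa|)=0$ on the sphere is automatically satisfied since the degrees of the prescribed vortices sum to zero, so the construction of \cite{SS1} goes through on $\mathcal M$ without obstruction. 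A computation of $\GM$ of this configuration gives $h^2(\frac12\int|\nabla W|^2 + \frac\beta2\|\mu^\varepsilon\|_{TV}) + o(h^2)$, and letting $\varepsilon\to 0$ and optimizing over $W$ closes the argument; combining with the lower bound and $\GM(\psi_\kappa)/h^2$ bounded then also forces $W_*$ to be a minimizer and gives the energy asymptotics $\GM(\psi_\kappa)=h^2 E_\beta(W_*)+o(h^2)$.

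I expect the main obstacle to be the upper bound, and specifically the discretization step: one must replace a general zero-average Radon measure by a sum of $\pm1$ Diracs that still has exactly zero total mass, is $\varepsilon$-close in the flat/dual-Lipschitz sense, and has the points separated well enough (spacing $\gg\kappa^{-1}$, in fact $\gg$ some power of $\ln\kappa/\kappa$) that the interaction energy of the cores contributes only $o(h^2)$; keeping the total degree null throughout the approximation is exactly the extra bookkeeping not present in \cite{SS1}. The lower bound, by contrast, is essentially a transcription of the cylinder argument of \cite[Chapter~7]{SS2} to the closed surface $\mathcal M$, the only care being that there is no boundary term and the normalization $\int_{\mathcal M}V_\kappa=0$ must be used to pass from $\nabla V_\kappa$-control to $V_\kappa$-control via Poincaré.
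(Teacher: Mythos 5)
Your plan is in outline the same as the paper's: lower bound and compactness quoted from Sandier--Serfaty's Theorems~7.1--7.2, upper bound by discretizing the limiting vorticity into well-separated $2\pi$-weight point measures and testing with a configuration carrying those vortices, with the genuinely new ingredient being the total-zero-mass constraint. But notice that you flag this last ingredient (``keeping the total degree null throughout the approximation is exactly the extra bookkeeping not present in \cite{SS1}'') as the expected obstacle and then leave it there --- that bookkeeping \emph{is} the proof, and it is not automatic that a discretization can be made to have exactly zero total degree while retaining the separation and energy estimates, because the number of points on each side is dictated (up to $o(h)$) by $\mu_\pm(\mathcal M)$ and not freely adjustable. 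The paper's resolution is a three-step reduction: first adapt \cite[Prop.~2.2]{SS1} to nonnegative measures supported in a single coordinate chart; second, extend to arbitrary nonnegative measures on $\mathcal M$ by truncating to a finite union of compact coordinate neighborhoods, applying the local construction on each piece, and controlling the cross-interaction via the continuity of $G$ off the diagonal; third, for a signed zero-average measure, run the construction on $\mu_+$ and $\mu_-$ separately to get $n_\pm(\kappa)\sim h\mu_\pm(\mathcal M)/2\pi$ points, note that $\mu_+(\mathcal M)=\mu_-(\mathcal M)$ forces $n_+-n_-=o(h)$, and then \emph{remove} $n_+-n_-$ positive points from a fixed compact $K$ with $\mu_+(K)>0$ disjoint from $\mathrm{supp}\,\mu_-$; the removed mass is $o(1)$ after dividing by $h$, and the resulting perturbation of the interaction energy tends to zero because $G$ is continuous away from the diagonal and $K\cap\mathrm{supp}\,\mu_-=\emptyset$.

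Two further points worth flagging. You never address the passage from the planar construction of \cite{SS1} to the closed surface; the paper does this deliberately through the coordinate-chart lemmas above, and it is not a cosmetic issue since the Green's function and separation scales must be handled intrinsically. And you propose to discretize an \emph{arbitrary} competitor $W$, whose vorticity measure $-\Delta W+H$ need only be Radon and could be singular; the paper instead discretizes the specific minimizer $\mu_*$, using the $W^{2,p}$ regularity from the obstacle problem (Lemma~\ref{lem_freebound}) to guarantee that $\mu_*$ is absolutely continuous with respect to $\mathcal H^2_{\mathcal M}$, which is exactly the hypothesis the approximation lemmas require. If you insist on arbitrary competitors you would have to add a regularization step, whereas restricting to $\mu_*$ makes the argument close cleanly.
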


We impose the normalization conditions $\int_{\mathcal M} W \,d\mathcal H^2 = \int_{\mathcal M} *F \, d\mathcal H^2 = 0$. Then with some slight abuse of notation $E_\beta(W)$ can be expressed in terms of $\mu=-\Delta W+H$,
as 
\begin{equation*}
E_\beta(W) =E_\beta (\mu) =\frac{\beta}{2} \|\mu\|_{TV} + \frac 12 \int_{\mathcal M} G(x,y) d(\mu-H)(x) d(\mu-H)(y),
\end{equation*}
 where $G(x,y)$ is the Green's function satisfying
\begin{equation*}
-\Delta_{\mathcal M} G(\cdot,y) = \delta_y - \frac{1}{\mathcal H^2(\mathcal M)}.
\end{equation*}
Here $\mu$ has to be a Radon measure of zero average since it comes from $\mu=-\Delta(W-*F)$, hence $\int_{\mathcal M} d\mu = 0$. 

Note that $E_\beta(\mu)$ may not be well-defined for every measure $\mu$, but at the end we will only need it to be well-defined for the particular $\mu_*$ associated to $W_*$ solving the obstacle problem~\eqref{obstacleH}, and this follows from the regularity theory for the obstacle problem (see Lemma~\ref{lem_freebound} and Appendix~\ref{a_prooffreebound}).

\subsection*{Sketch of the proof of the upper bound in Proposition \ref{propmfa}}

 The proof of the lower bound and compactness for minimizers follows directly from Theorems $7.1$ and $7.2$ in \cite{SS1}. We note that a by product of the analysis in \cite{SS1} is that $\frac{2\pi\sum_{i\in I}d_i\delta_{a_i}}{h}$ converges to $-\Delta\(\frac{V_\kappa}{h}-*F\)$ in the sense of measures and in $W^{1,p},$ for $p<2.$ 

The upper bound on the other hand is a little more delicate to adapt. Next we provide the details. The main tool to derive the upper bound in \cite{SS1} is a construction of measures $\mu_\kappa$ which approximate the measure $\mu_*$ minimizing $I_\beta$, and which are concentrated in balls of size $\kappa^{-1}$ each carrying a weight $2\pi$.
 Before stating the precise result, we introduce the functional $J=J_\beta$ 
\begin{equation}\label{J(mu)}
J(\mu):=\beta \|\mu\|_{TV} +  \int_{\mathcal M\times \mathcal M} G(x,y) d\mu(x) d\mu(y).
\end{equation}
 The following result then corresponds to Proposition~2.2 in \cite{SS1}.

\begin{prop}\label{approxmeas}
Let $\mu=\mu_+-\mu_-$ be the minimizer of $I_\beta$. Then, for $\kappa$ large enough, there exist points $a_{j,\pm}^\kappa$, $1\leq j \leq n_\pm(\kappa)$, such that
\begin{equation*}
n_\pm(\kappa)\sim \frac{h(\kappa)\mu_\pm(\mathcal M)}{2\pi},\qquad d(a_{j,\pm}^\kappa,a_{\ell,\pm}^\kappa)>4\kappa^{-1},
\end{equation*}
and, letting $\mu_\kappa^{j,\pm}$ be the uniform measure on  $\partial B(a_{j,\pm},\kappa^{-1})$ of mass $2\pi$, the measure
\begin{equation*}
\mu_\kappa:=\frac{1}{h(\kappa)}\sum_{j=1}^{n_+(\kappa)}\mu_\kappa^{j,+} - \frac{1}{h(\kappa)}\sum_{j=1}^{n_-(\kappa)}\mu_\kappa^{j,-}\quad\text{converges to }\mu,
\end{equation*}
in the sense of measures as $\kappa\to +\infty$. Moreover it holds $\int_{\mathcal M}d\mu_\kappa = 0$, and
\begin{equation}\label{approxenergy}
\limsup_{\kappa\to\infty} \int_{\mathcal M\times \mathcal M} G(x,y) d\mu_\kappa(x) d\mu_\kappa(y) \leq J(\mu),
\end{equation}
where $J=J_\beta$ is defined in \eqref{J(mu)}.
\end{prop}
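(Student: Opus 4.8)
\textbf{Proof plan for Proposition~\ref{approxmeas}.}
The strategy is to adapt the construction of Proposition~2.2 in \cite{SS1} to our closed-surface setting, the only genuine novelty being that the minimizing measure $\mu=\mu_+-\mu_-$ now has \emph{signed} parts, with $\mu_+(\mathcal M)=\mu_-(\mathcal M)$ forced by the zero-average constraint. The plan is to treat the positive and negative parts separately and symmetrically. First I would fix a small parameter, mollify each of $\mu_\pm$ at scale $\eta$ to get $C^\infty$ densities $\mu_\pm^\eta$ that are bounded, converge weakly to $\mu_\pm$, and have masses converging to $\mu_\pm(\mathcal M)$; since $\mathcal M$ is compact and smooth this is routine, and because the Green kernel $G$ has only a logarithmic singularity, $\int G\,d\mu^\eta\,d\mu^\eta\to\int G\,d\mu\,d\mu$ as $\eta\to 0$ provided $\mu$ already makes the right-hand side finite — which it does, since $\mu_*$ solves the obstacle problem and hence, by Lemma~\ref{lem_freebound}, has an $L^\infty$ density relative to $d\mathcal H^2_{\mathcal M}$ (this is exactly the point flagged after the statement of Proposition~\ref{propmfa}). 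So it suffices to approximate each \emph{smooth bounded} measure $\mu_\pm^\eta$ by sums of $2\pi/h$-normalized circle measures.

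For that discretization step I would follow the standard ``merging squares'' argument: cover $\mathcal M$ by coordinate charts, in each chart tile the relevant region by a grid of squares of side $\sim (h)^{-1/2}$, and in each square place $\lceil \tfrac{h}{2\pi}\int_{\text{square}}\mu_+^\eta\rceil$ points (resp. for $\mu_-^\eta$) on an even finer sublattice so that the pairwise distance exceeds $4\kappa^{-1}$ — here one uses $h=o(\kappa^2)$, which is exactly the hypothesis guaranteeing $h^{-1/2}\gg\kappa^{-1}$ so that there is room. This yields $n_\pm(\kappa)\sim \tfrac{h}{2\pi}\mu_\pm(\mathcal M)$ and, by construction, $\tfrac{1}{h}\sum\mu_\kappa^{j,\pm}\rightharpoonup\mu_\pm^\eta$. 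Then a further diagonal extraction over $\eta=\eta(\kappa)\to0$ slowly gives weak convergence to $\mu$ itself. Crucially, I would enforce $\int_{\mathcal M}d\mu_\kappa=0$ exactly: since both $\mu_+^\eta$ and $\mu_-^\eta$ have the \emph{same} total mass (equal to $\mu_\pm(\mathcal M)$ up to $o(1)$), the rounded counts $n_+(\kappa)$ and $n_-(\kappa)$ differ only by a bounded number; I would correct this finite discrepancy by adding or deleting a bounded number of points from the side with fewer, which changes the measure by $O(1/h)\to 0$ and hence affects neither the weak limit nor the energy bound.

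The energy estimate \eqref{approxenergy} is where the signed structure must be handled with care, and it is the step I expect to be the main obstacle. Writing $\mu_\kappa=\mu_\kappa^+-\mu_\kappa^-$, one expands $\int\!\int G\,d\mu_\kappa\,d\mu_\kappa$ into self-interactions of $\mu_\kappa^\pm$ and the cross term $-2\int\!\int G\,d\mu_\kappa^+\,d\mu_\kappa^-$. The self-interaction of each circle measure $\mu_\kappa^{j,\pm}$ with itself contributes a term $\sim \tfrac{(2\pi)^2}{h^2}\log\kappa$ per point, hence $\sim \tfrac{n_\pm}{h^2}\log\kappa\sim \tfrac{\mu_\pm(\mathcal M)}{2\pi h}\log\kappa\to \tfrac{\beta}{2\pi}\mu_\pm(\mathcal M)$; summing the positive and negative contributions produces exactly $\beta\|\mu\|_{TV}$, the first term of $J$. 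The remaining off-diagonal self-interactions and the full cross term converge to $\int\!\int G\,d\mu\,d\mu$ by weak convergence together with the uniform logarithmic integrability of $G$ against the $\mu_\kappa^\pm$ (which follows from the separation $d(a_{j,\pm}^\kappa,a_{\ell,\pm}^\kappa)>4\kappa^{-1}$ and a standard dyadic counting of how many points lie in annuli of radius $2^{-k}$ around a given point, bounded using that the densities $\mu_\pm^\eta$ are $L^\infty$). The one subtlety, compared to \cite{SS1}, is that the cross term $-2\int\!\int G\,d\mu_\kappa^+\,d\mu_\kappa^-$ is \emph{negative} and could a priori diverge if positive and negative points accumulated on top of each other; but because $\mu_+$ and $\mu_-$ are mutually singular and $\mu_+^\eta,\mu_-^\eta$ are bounded, one gets a uniform bound on how close a $+$ point can be to a $-$ point — or more simply, one bounds this term from below by $-C\int\!\int|\log|x-y||\,d\mu_\kappa^+\,d\mu_\kappa^-$ and passes to the limit using dominated convergence against the bounded densities. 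Collecting everything yields $\limsup_\kappa \int\!\int G\,d\mu_\kappa\,d\mu_\kappa \le \beta\|\mu\|_{TV}+\int\!\int G\,d\mu\,d\mu = J(\mu)$, which is \eqref{approxenergy}.
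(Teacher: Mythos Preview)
Your overall strategy---treat $\mu_+$ and $\mu_-$ separately by the Sandier--Serfaty discretization, then enforce the zero-average constraint---is exactly the paper's. The differences are in the details, and a couple of your steps do not quite work as written.

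First, the zero-average correction. Your claim that $n_+(\kappa)-n_-(\kappa)$ is \emph{bounded} is not right: the discretization in \cite{SS1} only gives $n_\pm(\kappa)\sim h\mu_\pm(\mathcal M)/2\pi$, so the discrepancy is $o(h)$, not $O(1)$. The paper absorbs this by removing $n_+-n_-=o(h)$ circles from the larger side, but it does so only after first reducing (by monotone truncation and a diagonal argument) to the case where $\mu_+$ and $\mu_-$ have \emph{disjoint compact supports}. That reduction is what makes the correction painless: the removed points are taken inside a compact set $K\subset\mathrm{supp}\,\mu_+$ disjoint from $\mathrm{supp}\,\mu_-$, so the change in the cross term $\int G\,d(\mu_\kappa^+-\tilde\mu_\kappa^+)\,d\mu_\kappa^-$ is controlled by the continuity of $G$ away from the diagonal and tends to zero. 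Your mollification step destroys disjointness of supports, so this argument is unavailable; you would need an alternative way to ensure the removed $+$-points stay uniformly away from the $-$-points.

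Second, your discussion of the cross term $-2\int\!\int G\,d\mu_\kappa^+\,d\mu_\kappa^-$ is aimed at the wrong inequality. Since $G$ is bounded below, divergence to $-\infty$ would only \emph{help} the $\limsup$ bound \eqref{approxenergy}; what you actually need is that the cross term is not too \emph{large}, i.e.\ $\liminf_\kappa \int\!\int G\,d\mu_\kappa^+\,d\mu_\kappa^- \ge \int\!\int G\,d\mu_+\,d\mu_-$, and this follows from lower semicontinuity of the kernel, not from any dominated-convergence argument ``against bounded densities'' (the $\mu_\kappa^\pm$ are singular circle measures, not densities). The paper sidesteps this entirely: with disjoint compact supports the cross term simply \emph{converges} by continuity of $G$ off the diagonal. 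In short, replacing your mollification by the paper's truncation to disjoint supports would close both gaps at once.
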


Above, $d$ denotes geodesic distance and $\partial B$ denotes a geodesic circle accordingly.The zero average property $\int d\mu_\kappa =0$ is needed later to solve $-\Delta (V-*F)=\mu_\kappa$. It actually amounts to asking $n_+(\kappa)=n_-(\kappa)$.
The upper bound \eqref{approxenergy} is crucial to estimate the energy of the testing configuration constructed with help of the measures $\mu_\kappa$, and requires great care in the way the points $a^\kappa_j$ are distributed.

In \cite{SS1}, the authors consider non-negative measures defined on a domain in the plane, with no average constraint. Here we are dealing with measures on a surface having positive and negative parts, and, more importantly, satisfying the  zero average constraint.

Next we state a lemma that can be directly adapted from \cite[Proposition~2.2]{SS1}, which deals only with positive measures with support inside a coordinate neighborhood. Then we will explain how to use this lemma to obtain Proposition~\ref{approxmeas} above.

\begin{lem}{\cite[Proposition~2.2]{SS1}}\label{approxlem1}
Assume that $\mu$ is a non-negative Radon measure on $\mathcal M$, absolutely continuous with respect to the 2-dimensional measure on $\mathcal M$, and with support contained inside a coordinate neighborhood. Then, there exist points $a_j^\kappa$, $1\leq j\leq n(\kappa)$, with
\begin{equation*}
n(\kappa)\sim \frac{h(\kappa)\mu(\mathcal M)}{2\pi}\quad\text{and }d(a_j^\kappa,a_\ell^\kappa)> 4\kappa^{-1},
\end{equation*}
such that, with $\mu_\kappa^j$ the uniform measure of mass $2\pi$ on $\partial B(a_j^\kappa,\kappa^{-1})$, it holds
\begin{equation*}
\mu_\kappa =\frac{1}{h(\kappa)}\sum_{j=1}^{n(\kappa)}\mu_\kappa^j \quad\text{converges to }\mu,
\end{equation*}
and the upper bound \eqref{approxenergy} is satisfied.
\end{lem}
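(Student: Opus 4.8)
The plan is to reproduce the construction of \cite[Proposition~2.2]{SS1} inside a single coordinate chart, where the geometry of $\mathcal M$ is only a lower-order perturbation of the planar setting treated there. Concretely, I would fix a coordinate neighborhood $\Omega\subset\mathcal M$ containing $\mathrm{supp}\,\mu$ and pull $\mu$ back to a bounded planar domain; in these coordinates the Riemannian metric is comparable to the Euclidean one, the geodesic balls $B(a_j^\kappa,\kappa^{-1})$ and their boundary circles are $C^\infty$-close to the Euclidean ones of the same radius, and the Green function decomposes as $G(x,y)=-\frac{1}{2\pi}\ln d(x,y)+g(x,y)$ with $g$ continuous up to the diagonal. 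Hence the only singular contribution to the interaction energy is the logarithmic one, exactly as in \cite{SS1}.

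First I would reduce to a measure with bounded density: approximating $\mu$ from below by $\mu^{(k)}=\mu\wedge k\mathcal H^2$ gives $\mu^{(k)}\uparrow\mu$, $\mu^{(k)}(\mathcal M)\to\mu(\mathcal M)$, and, since $G$ is bounded below, $\int_{\mathcal M\times\mathcal M} G\,d\mu^{(k)}\,d\mu^{(k)}\to\int_{\mathcal M\times\mathcal M} G\,d\mu\,d\mu$; a routine diagonal extraction in $(k,\kappa)$ then recovers the general statement. For $\mu$ with bounded density I would partition $\Omega$ into squares $Q_i$ of sidelength $\delta$ and, inside each $Q_i$, place $n_i(\kappa)\approx h(\kappa)\mu(Q_i)/(2\pi)$ points on a regular sub-grid; the resulting spacing is $\asymp h(\kappa)^{-1/2}$, which exceeds $4\kappa^{-1}$ once $\kappa$ is large because $h(\kappa)=o(\kappa^2)$, so the separation condition is automatic. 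Summing over $i$ yields $n(\kappa)=\sum_i n_i(\kappa)\sim h(\kappa)\mu(\mathcal M)/(2\pi)$, and since each block carries mass $2\pi n_i(\kappa)/h(\kappa)\approx\mu(Q_i)$ and is equidistributed in $Q_i$, the measure $\mu_\kappa$ converges to $\mu$ in the sense of measures after letting first $\kappa\to\infty$ and then $\delta\to0$.

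The heart of the matter is the energy bound \eqref{approxenergy}, which I would obtain by splitting $\int_{\mathcal M\times\mathcal M} G\,d\mu_\kappa\,d\mu_\kappa$ into diagonal (self-) and off-diagonal (mutual-) interactions. For the self-interaction, a uniform circle of radius $\kappa^{-1}$ and mass $2\pi$ satisfies $\int\int\big(-\tfrac1{2\pi}\ln d(x,y)\big)d\mu_\kappa^j\,d\mu_\kappa^j=2\pi\ln\kappa+O(1)$, so the $n(\kappa)$ diagonal terms sum, after dividing by $h(\kappa)^2$, to $\tfrac{n(\kappa)}{h(\kappa)^2}\,2\pi\ln\kappa+O\!\big(n(\kappa)/h(\kappa)^2\big)\to\beta\,\mu(\mathcal M)=\beta\|\mu\|_{TV}$. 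For the mutual interactions one compares $\tfrac1{h(\kappa)^2}\sum_{i\ne j}\int\int G\,d\mu_\kappa^i\,d\mu_\kappa^j$ with $\int_{\mathcal M\times\mathcal M} G\,d\mu\,d\mu$; the only danger is the clustering of points in the region where $\mu$ is large, where pairwise distances are as small as $\asymp\kappa^{-1}$, but the lower bound $\gtrsim h(\kappa)^{-1/2}$ on the spacing controls how many pairs sit at each scale and yields $\tfrac1{h(\kappa)^2}\sum_{i\ne j}\big(-\tfrac1{2\pi}\ln d(a_i^\kappa,a_j^\kappa)\big)\le\int_{\mathcal M\times\mathcal M}\big(-\tfrac1{2\pi}\ln d(x,y)\big)d\mu\,d\mu+o_\delta(1)$. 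This point-distribution estimate is the technical core of \cite[Proposition~2.2]{SS1}, and is the step I expect to be the main obstacle; the rest is bookkeeping. Taking $\kappa\to\infty$, then $\delta\to0$, and finally undoing the truncation gives $\limsup_\kappa\int_{\mathcal M\times\mathcal M} G\,d\mu_\kappa\,d\mu_\kappa\le\beta\|\mu\|_{TV}+\int_{\mathcal M\times\mathcal M} G\,d\mu\,d\mu=J(\mu)$, as desired.
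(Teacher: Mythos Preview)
Your proposal is correct and matches the paper's approach: the paper does not give a detailed proof but simply states that the lemma is a straightforward adaptation of \cite[Proposition~2.2]{SS1}, using a coordinate chart to transport the planar construction to $\mathcal M$ and the decomposition of the Green function into $-\tfrac{1}{2\pi}\ln d(x,y)$ plus a continuous remainder. Your outline (truncation to bounded density, grid placement at scale $\asymp h(\kappa)^{-1/2}$, diagonal/off-diagonal splitting of the energy) is precisely the content of that proposition, and you correctly identify the point-distribution estimate as the only nontrivial step, which is imported wholesale from \cite{SS1}.
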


The proof of Lemma~\ref{approxlem1} is just a straightforward adaptation of \cite[Proposition~2.2]{SS1}, using the coordinate chart to transport their construction from the plane to our surface and general properties of the Green's function of the Laplacian  on a compact surface.

Next we explain how to deal with non-negative measures whose support does not lie inside a coordinate neighborhood.

\begin{lem}\label{approxlem2}
Assume that $\mu$ is a non-negative Radon measure on $\mathcal M$, absolutely continuous with respect to the $2$-dimensional measure on $\mathcal M$. Then the conclusion of Lemma~\ref{approxlem1} holds.
\end{lem}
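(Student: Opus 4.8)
The plan is to reduce the general case to Lemma~\ref{approxlem1} by a partition argument. Since $\mathcal M$ is compact, it is covered by finitely many coordinate neighborhoods $U_1,\dots,U_N$. Choosing a smooth partition of unity $\chi_1,\dots,\chi_N$ subordinate to this cover, with $\sum_k \chi_k \equiv 1$, we decompose the measure as $\mu = \sum_{k=1}^N \mu^{(k)}$, where $\mu^{(k)} := \chi_k\,\mu$ is a non-negative Radon measure, absolutely continuous with respect to $d\mathcal H^2_{\mathcal M}$, and with support contained in $\overline{U_k}$. Shrinking the cover slightly if necessary, we may assume each $\mathrm{supp}\,\mu^{(k)}$ lies inside a coordinate neighborhood, so that Lemma~\ref{approxlem1} applies to each $\mu^{(k)}$ individually.

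Applying Lemma~\ref{approxlem1} to each $\mu^{(k)}$ yields points $a^{k,\kappa}_j$, $1\leq j\leq n_k(\kappa)$, with $n_k(\kappa)\sim h(\kappa)\mu^{(k)}(\mathcal M)/(2\pi)$, pairwise separated by more than $4\kappa^{-1}$ within each family, and associated measures $\mu_\kappa^{k,j}$ (uniform of mass $2\pi$ on $\partial B(a^{k,\kappa}_j,\kappa^{-1})$) such that $\mu_\kappa^{(k)} := h(\kappa)^{-1}\sum_j \mu_\kappa^{k,j}$ converges to $\mu^{(k)}$ and satisfies the upper bound $\limsup_\kappa \int G\, d\mu_\kappa^{(k)}\, d\mu_\kappa^{(k)} \leq J_\beta(\mu^{(k)})$. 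One then takes $\mu_\kappa := \sum_{k=1}^N \mu_\kappa^{(k)}$ as the candidate and relabels all the points $\{a^{k,\kappa}_j\}_{k,j}$ into a single family. The total number of points is $n(\kappa)=\sum_k n_k(\kappa)\sim h(\kappa)\sum_k \mu^{(k)}(\mathcal M)/(2\pi) = h(\kappa)\mu(\mathcal M)/(2\pi)$, and $\mu_\kappa\to\mu$ weakly as a sum of finitely many convergent sequences.

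The main obstacle is twofold and both issues concern the \emph{interaction} between different families. First, the separation condition $d(a,a')>4\kappa^{-1}$ holds within each family $k$ but not necessarily between points coming from different $U_k$'s: near the overlaps of the cover, points from distinct families could collide. To fix this, I would exploit the freedom in the construction — the points $a^{k,\kappa}_j$ in Lemma~\ref{approxlem1} can be chosen in any sufficiently fine sublattice, so after a small translation of each lattice (by an amount $O(\kappa^{-1})$, which does not affect the weak limit nor the energy asymptotics) one can arrange the $N$ families to be mutually $4\kappa^{-1}$-separated; since $N$ is fixed this is a finite iterative adjustment. Second, the energy bound: expanding $\int G\,d\mu_\kappa\,d\mu_\kappa = \sum_k \int G\, d\mu_\kappa^{(k)}\,d\mu_\kappa^{(k)} + \sum_{k\neq l}\int G\,d\mu_\kappa^{(k)}\,d\mu_\kappa^{(l)}$, one must show the cross terms pass to the limit, namely $\int G\,d\mu_\kappa^{(k)}\,d\mu_\kappa^{(l)}\to \int G\,d\mu^{(k)}\,d\mu^{(l)}$ for $k\neq l$. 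Because $\mu_\kappa^{(k)}$ and $\mu_\kappa^{(l)}$ have, after the adjustment above, uniformly separated supports only when the points are far apart — but they may still approach each other near cover overlaps — one should instead note that $G$ has only a logarithmic singularity and use that $\iint \log(1/d(x,y))\,d\mu_\kappa^{(k)}(x)\,d\mu_\kappa^{(l)}(y)$ is controlled because $\mu^{(k)}$ and $\mu^{(l)}$ are absolutely continuous (so the diagonal $\{x=y\}$ carries no mass and the cross-interaction energy of the limiting measures is finite). Combining, $\limsup_\kappa \int G\,d\mu_\kappa\,d\mu_\kappa \leq \sum_k J_\beta(\mu^{(k)}) + \sum_{k\neq l}\int G\, d\mu^{(k)}\,d\mu^{(l)}$; then using $\|\mu\|_{TV}=\sum_k\|\mu^{(k)}\|_{TV}$ (valid since all $\mu^{(k)}\geq 0$) one recognizes the right-hand side as exactly $J_\beta(\mu)$, which gives \eqref{approxenergy} and completes the proof.
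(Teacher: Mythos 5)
Your decomposition is genuinely different from the paper's. You use a partition of unity subordinate to a coordinate cover, so the pieces $\mu^{(k)}=\chi_k\mu$ have \emph{overlapping} supports. The paper instead proceeds by truncation and inner regularity: in Step~1 it approximates $\mu$ from below by $\mu_n=\one_{K_n}\mu$, where $K_n$ is a finite \emph{disjoint} union of compact sets inside coordinate neighborhoods, then in Step~2 treats only such disjointly supported measures, where $G$ is continuous off the diagonal and the cross terms pass to the limit trivially. The overlaps in your decomposition are exactly what create the two difficulties you yourself flag, and the paper's route is designed to avoid both.

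The gap in your proposal is in the treatment of the cross terms. You argue that $\int G\,d\mu_\kappa^{(k)}\,d\mu_\kappa^{(l)}\to\int G\,d\mu^{(k)}\,d\mu^{(l)}$ because the \emph{limit} measures are absolutely continuous, so the limiting cross-interaction energy is finite. But finiteness of the limit does not give convergence of the approximants: the $\mu_\kappa^{(k)}$ are singular (masses $2\pi$ on circles of radius $\kappa^{-1}$), and in an overlap region points $a$ from family $k$ and $b$ from family $l$ could lie at mutual distance $\sim\kappa^{-1}$ even after your translation, contributing $\sim(2\pi)^2\,h(\kappa)^{-2}\log\kappa$ per such pair. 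With $O(h)$ such pairs this adds $O(\log\kappa/h)=O(\beta)$ to the $\limsup$, which does not vanish. To close this you would have to show that, in the construction of Lemma~\ref{approxlem1}, the typical inter-point spacing is $\sim h^{-1/2}\gg\kappa^{-1}$ (true since $h=o(\kappa^2)$), and hence that close pairs across families contribute only $O(\log h/h)\to 0$ — but none of this is in your argument, and it requires opening up the SS construction rather than treating it as a black box. Your separation fix has a similar flavor: translating each lattice by $O(\kappa^{-1})$ may indeed achieve pairwise $4\kappa^{-1}$-separation, but you need to verify that such a translation neither disturbs the weak limit nor the upper bound \eqref{approxenergy}, and for the latter you would again be inside the construction. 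The paper's reduction by truncation sidesteps both issues cleanly: once the pieces have disjoint compact supports, $G$ is uniformly continuous on the relevant product set and the cross terms converge by weak convergence alone, with no spacing analysis needed.
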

\begin{proof}

\textit{Step 1:} We reduce to the case where the support of $\mu$ is a finite disjoint union of compact coordinate neighborhoods. Assume indeed that the conclusion of Lemma~\ref{approxlem2} holds for such measures. It is possible to construct a sequence $\mu_n$ of such measures, such that $0\leq \mu_n \leq \mu$ and $\mu_n$ converges to $\mu$. Indeed, just define $\mu_n=\one_{K_n} \mu$, where $K_n$ is a finite disjoint union of compact subsets of coordinate neighborhoods, and $\mu(\mathcal M \setminus K_n)\to 0$. Such a sequence $K_n$ exists because $\mathcal M$ is compact and the measure $\mu$ is inner regular.
For each $\mu_n$ we obtain a sequence $\mu_n^\kappa$ tending to $\mu_n$ with the good properties. After a diagonal process, we obtain a sequence $\mu_\kappa$ converging to $\mu$,
such that
\begin{equation*}
\limsup_{\kappa\to\infty} \int_{\mathcal M\times\mathcal M} G(x,y) d\mu_\kappa(x) d\mu_\kappa(y) \leq \liminf J(\mu_n).
\end{equation*}
It remains to show that the right-hand side is less than $J(\mu)$, which follows from $0\leq\mu_n\leq\mu$ and $G\geq 0$.

\textit{Step 2:} We prove Lemma~\ref{approxlem2} for $\mu$ that can be decomposed in the form
\begin{equation*}
\mu=\mu_1 + \cdots + \mu_N,
\end{equation*}
where the supports of the $\mu_j$ are inside disjoint compact coordinate neighborhoods, and each $\mu_j$ is non-negative and absolutely continuous with respect to $\mathcal H^2_{\mathcal M}$. Then one can apply Lemma~\ref{approxlem1} to each $\mu_j$ to obtain sequences 
$\mu_{j,\kappa}$ with the good properties. Then, defining $\mu_\kappa=\mu_{1,\kappa}+\cdots+\mu_{N,\kappa}$, one obtains
\begin{equation*}
\limsup \int G(x,y) d\mu_\kappa(x) d\mu_\kappa(y)  \leq \sum_{j} J(\mu_j) + \limsup \sum_{j\neq\ell} \int G(x,y) d\mu_{j,\kappa}(x) d\mu_{\ell,\kappa}(y).
\end{equation*}
Since the supports of distinct $\mu_j$ are disjoint and $G(x,y)$ is continuous outside the diagonal $\lbrace x=y\rbrace$, it holds
\begin{equation*}
\int G(x,y) d\mu_{j,\kappa}(x) d\mu_{\ell,\kappa}(y) \to \int G(x,y) d\mu_{j}(x) d\mu_{\ell}(y) \qquad\text{for }j\neq \ell,
\end{equation*}
and we conclude that
\begin{equation*}
\limsup \int G(x,y) d\mu_\kappa(x) d\mu_\kappa(y)  \leq \sum_{j} J(\mu_j) + \sum_{j\neq\ell} \int G(x,y) d\mu_{j}(x) d\mu_{\ell}(y)=J(\mu).
\end{equation*}
The proof is complete.
\end{proof}

Finally we deal with measures having positive and negative parts, and satisfying the zero average constraint.

\begin{lem}\label{approxlem3}
Let $\mu$ be a zero-average Radon measure on $\mathcal M$, absolutely continuous with respect to $\mathcal H^2_{\mathcal M}$. Then the conclusions of Proposition~\ref{approxmeas} hold. 
\end{lem}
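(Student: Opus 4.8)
\textbf{Proof plan for Lemma~\ref{approxlem3}.} The plan is to split $\mu$ into its Jordan decomposition $\mu = \mu_+ - \mu_-$ and apply Lemma~\ref{approxlem2} separately to the non-negative measures $\mu_\pm$. Since $\mu$ has zero average, $\mu_+(\mathcal M)=\mu_-(\mathcal M)=:m$. Lemma~\ref{approxlem2} produces points $a^\kappa_{j,\pm}$, $1\le j\le n_\pm(\kappa)$, with $n_\pm(\kappa)\sim h(\kappa)m/2\pi$, well-separated ($d(a^\kappa_{j,\pm},a^\kappa_{\ell,\pm})>4\kappa^{-1}$), so that the associated measures $\tilde\mu^{\pm}_\kappa := \frac 1{h(\kappa)}\sum_j \mu^{j,\pm}_\kappa$ converge respectively to $\mu_\pm$ and satisfy $\limsup \int\!\!\int G\, d\tilde\mu^\pm_\kappa\, d\tilde\mu^\pm_\kappa \le J(\mu_\pm)$.

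The first real issue is the zero-average constraint $\int d\mu_\kappa = 0$, i.e. $n_+(\kappa)=n_-(\kappa)$. Lemma~\ref{approxlem2} only guarantees $n_\pm(\kappa)\sim h(\kappa)m/2\pi$, so a priori the two counts differ by a quantity that is $o(h(\kappa))$ but need not vanish. I would remedy this by adding or removing a small number $|n_+(\kappa)-n_-(\kappa)|$ of points from whichever family is larger (or, more cleanly, re-inspecting the construction in \cite[Proposition~2.2]{SS1}, where the number of points on each piece of a grid can be chosen with this kind of freedom). Since the number of points modified is $o(h(\kappa))$ and each carries weight $2\pi/h(\kappa)$, the corresponding change in the measures is $o(1)$ in mass, hence does not affect the weak convergence $\tilde\mu^\pm_\kappa\to\mu_\pm$; one must also keep the separation condition, which is possible because we only need to delete points or insert finitely-many new well-separated ones in a region of positive $\mu_\pm$-measure. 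After this adjustment we set $n(\kappa):=n_+(\kappa)=n_-(\kappa)$ and $\mu_\kappa := \tilde\mu^+_\kappa - \tilde\mu^-_\kappa$, which then has $\int d\mu_\kappa=0$ and converges to $\mu$ in the sense of measures.

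It remains to establish the energy bound \eqref{approxenergy}, namely $\limsup_\kappa \int\!\!\int G\, d\mu_\kappa\, d\mu_\kappa \le J(\mu)$ with $J(\mu)=\beta\|\mu\|_{TV} + \int\!\!\int G\, d\mu\, d\mu$. Expanding the quadratic form,
\begin{equation*}
\int\!\!\int G\, d\mu_\kappa\, d\mu_\kappa = \int\!\!\int G\, d\tilde\mu^+_\kappa\, d\tilde\mu^+_\kappa + \int\!\!\int G\, d\tilde\mu^-_\kappa\, d\tilde\mu^-_\kappa - 2\int\!\!\int G\, d\tilde\mu^+_\kappa\, d\tilde\mu^-_\kappa.
\end{equation*}
For the two diagonal terms I use the bounds from Lemma~\ref{approxlem2}: $\limsup \le J(\mu_+) + J(\mu_-) = \beta\|\mu_+\|_{TV}+\beta\|\mu_-\|_{TV} + \int\!\!\int G\,d\mu_+d\mu_+ + \int\!\!\int G\,d\mu_-d\mu_-$. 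Here $\beta(\|\mu_+\|_{TV}+\|\mu_-\|_{TV}) = \beta\|\mu\|_{TV}$ since $\mu_+$ and $\mu_-$ have disjoint supports. For the cross term I want $\int\!\!\int G\, d\tilde\mu^+_\kappa\, d\tilde\mu^-_\kappa \to \int\!\!\int G\, d\mu_+\, d\mu_-$; this is the point requiring care, because $G$ has a logarithmic singularity on the diagonal and $\mathrm{supp}\,\mu_+$ and $\mathrm{supp}\,\mu_-$ may touch. The clean way is to note that $G$ is bounded below (add a constant to make $G\ge 0$, using that $\mathcal M$ is compact, as in the remark following Proposition~\ref{approxmeas}), write the cross term with $-2G\le 0$, and use that weak-* convergence of $\tilde\mu^\pm_\kappa$ together with lower semicontinuity of $\int\!\!\int (-2G)\,d\mu d\nu$ under weak convergence gives $\limsup (-2)\int\!\!\int G\, d\tilde\mu^+_\kappa d\tilde\mu^-_\kappa \le -2\int\!\!\int G\, d\mu_+ d\mu_-$; the additive constant contributes $-2c\, m^2$ on both sides and cancels. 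Collecting the three estimates yields exactly $\limsup \le \beta\|\mu\|_{TV} + \int\!\!\int G\,d(\mu_+-\mu_-)d(\mu_+-\mu_-) = J(\mu)$, which is \eqref{approxenergy}; the asymptotics $n_\pm(\kappa)\sim h(\kappa)\mu_\pm(\mathcal M)/2\pi$ and the separation are inherited from the construction, completing the proof of Proposition~\ref{approxmeas}. The main obstacle, as indicated, is handling the cross term $\int\!\!\int G\, d\tilde\mu^+_\kappa d\tilde\mu^-_\kappa$ near the diagonal; the sign trick above circumvents any need to quantify how close the supports come.
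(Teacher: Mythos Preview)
Your plan is correct and leads to a valid proof, but it takes a genuinely different route from the paper's. The paper proceeds in three steps: first it shows (Step~1) that the zero-average constraint can be fixed \emph{a posteriori} by deleting $n_+-n_-=o(h)$ points from a compact set $K$ with $\mu_+(K)>0$ that is chosen \emph{disjoint from the support of }$\mu_-$; then (Step~2) it reduces, via truncation and a diagonal argument, to the case where $\mu_+$ and $\mu_-$ have disjoint compact supports; finally (Step~3) it applies Lemma~\ref{approxlem2} to each part and handles the cross term using only the \emph{continuity} of $G$ off the diagonal, which is legitimate precisely because the supports have been separated. Your approach bypasses the disjoint-support reduction entirely: you apply Lemma~\ref{approxlem2} directly to $\mu_\pm$ and control the cross term by the one-sided inequality coming from lower semicontinuity of $(\nu_1,\nu_2)\mapsto\int\!\!\int G\,d\nu_1\,d\nu_2$ for $G\ge 0$. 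This is cleaner and shorter; the price is that you must invoke lower semicontinuity (a truncation $G_M=\min(G,M)$ argument) rather than plain continuity, and you should note explicitly that after deleting points the diagonal term can only decrease (since $G\ge 0$ and the adjusted positive measure is dominated by the original one), so the bound $J(\mu_\pm)$ from Lemma~\ref{approxlem2} is preserved. The paper's extra care in Step~1---removing points only inside a $K$ disjoint from $\mathrm{supp}\,\mu_-$---is what makes the correction term $2\int G\,d(\mu^+_\kappa-\tilde\mu^+_\kappa)\,d\mu^-_\kappa$ tend to zero there; you do not need this because your cross-term estimate via lower semicontinuity is applied to the adjusted measures directly. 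One small point worth tightening: stick to \emph{deleting} points from the larger family rather than inserting, so that the within-family separation $d(a^\kappa_{j,\pm},a^\kappa_{\ell,\pm})>4\kappa^{-1}$ is automatically preserved and the monotonicity $G\ge 0$ argument applies cleanly.
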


\begin{proof}

\textit{Step 1:} It suffices to construct measures $\mu_{\kappa}$ satisfying all the conclusions of Proposition~\ref{approxmeas}, except for the zero average constraint. Assume indeed that we have such a sequence. Since $\mu$ satisfies the zero average constraint, it holds $\mu_+(\mathcal M)=\mu_-(\mathcal M)$ and we deduce that $n_+(\kappa)-n_-(\kappa) = o(h(\kappa))$. Up to considering a subsequence, we may assume that either $n_+(\kappa)\geq n_-(\kappa)$ for every $\kappa$ (or the opposite, but this is completely symmetric). We fix a compact  $K$ such that $\mu_+(K)>0$ and $K$ is disjoint from the support of $\mu_-$. Since $\mu_\kappa^+(K)$ converges to $\mu_+(K)$, the number of points $a_{j,+}^\kappa$ that are contained in $K$ for large $\kappa$ is larger than $c\cdot h(\kappa)$ for $c>0$. In particular it is larger that $n_+-n_-$, and we may define a measure $\tilde\mu_\kappa^+$ obtained from $\mu_\kappa^+$ by removing $(n_+-n_-)$ points $a_{j,+}^\kappa$ that lie inside $K$. The measure $\tilde\mu_\kappa = \tilde\mu_\kappa^+ -\mu_\kappa^-$ now satisfies the zero average condition, and since $n_+-n_-=o(h)$ the convergence $\tilde\mu_\kappa\to\mu$ still holds. It remains to prove that the upper bound 
\eqref{approxenergy} is satisfied also by $\tilde\mu_\kappa$. Since $G\geq 0$ and $0\leq \tilde\mu_\kappa^+ \leq \mu_\kappa^+$, it holds
\begin{equation*}
\begin{split}
\int G(x,y) d\tilde\mu_\kappa(x) d\tilde\mu_\kappa(y) & \leq \int G(x,y) d\mu_\kappa(x) d\mu_\kappa(y)\\
& \quad +2\int G(x,y) d(\mu_\kappa^+ -\tilde\mu_\kappa^+)(x) d\mu_\kappa^-(y).
\end{split}\end{equation*}
The last term converges to zero since $G$ is continuous outside the diagonal and $\mu_\kappa^+ -\tilde\mu_\kappa^+$ converges to zero and has support inside $K$ which is disjoint from the support of $\mu_-$. Hence we conclude that \eqref{approxenergy} holds.

\textit{Step 2:} As in Step 1 of Lemma~\ref{approxlem2}, we reduce to the case of a measure $\mu$ such that $\mu_+$ and $\mu_-$ have disjoint compact supports. Assume indeed that Lemma~\ref{approxlem3} holds for such measures, and consider, by truncating, monotone approximations $\mu_n^\pm$ of $\mu_\pm$, with disjoints compact supports and such that $0\leq \mu_n^\pm \leq \mu_\pm$. For each $n$ there exist measures $\mu_\kappa^n$ with the good properties, converging to $\mu_n:=\mu_n^+-\mu_n^-$. After a diagonal process, one obtains a sequence $\mu_\kappa$ such that
\begin{equation*}
\limsup_{\kappa\to\infty} \int_{\mathcal M\times\mathcal M} G(x,y) d\mu_\kappa(x) d\mu_\kappa(y) \leq \liminf J(\mu_n).
\end{equation*}
Since $G\geq 0$, by monotone convergence (or dominated convergence) terms of the form $\int G d\mu_n^\pm d\mu_n^\pm$ converge to $\int G d\mu^\pm d\mu^\pm$, so that
\begin{equation*}
 \int G(x,y) d\mu_n(x)d\mu_n(y) \longrightarrow \int G(x,y) d\mu(x)d\mu(y),
\end{equation*}
and we also have $\norm{\mu_n}\to\norm{\mu}$, so that $J(\mu_n)\to J(\mu)$ and we conclude that \eqref{approxenergy} holds.

\textit{Step 3:} We assume now that $\mu_+$ and $\mu_-$ have disjoint compact supports. Applying Lemma~\ref{approxlem2} to each of these non-negative measures, we can proceed exactly as in Step 2 of Lemma~\ref{approxlem2} to obtain the conclusion.
\end{proof}

With Lemma~\ref{approxlem3} at hand, the proof of Proposition~\ref{approxmeas} simply follows from the regularity theory for the obstacle problem (see Lemma~\ref{lem_freebound} and Appendix~\ref{a_prooffreebound}), which ensures in particular that the minimizing measure $\mu_*$ is absolutely continuous with respect to $\mathcal H^2_{\mathcal M}$.

Then the upper bound is obtained by constructing test configurations with vortices at the $a_{j,\pm}^\kappa$ as in the proof of \cite[Proposition 2.1]{SS1}. Those test configurations are obtained by solving $-\Delta (V_\kappa -h*F)=h\mu_\kappa$ and constructing the corresponding $\psi_\kappa$ which has modulus $1$ outside the balls $ B(a_{j,\pm}^\kappa,2\kappa^{-1})$'s, and phase given by $d\varphi_\kappa = hd^*F-*dV_\kappa$. \qed

\section{Proof of Lemma~\ref{lem_freebound}}\label{a_prooffreebound}

\begin{proof}[Proof of Lemma~\ref{lem_freebound}:]
Assume first that $V$ solves \eqref{varineq}. Then $V\in W^{2,p}(\mathcal M)$ for any $1<p<\infty$. This can be proven exactly as in \cite[Section~1.3]{petrosyan}, by remarking first that $V$ minimizes the functional
\begin{equation*}
\widetilde { \mathcal F}(V) =\int_{\mathcal M}\! \left(\abs{\nabla V}^2 + 2HV\one_{|V|\leq \beta/2}\right),
\end{equation*}
over the whole space $H^1(\mathcal M)$,
then approximating $\widetilde{\mathcal F}$ by a regularized version $\widetilde{\mathcal F}_\varepsilon$ of it, where the function $\Phi(t)=t\one_{|t|<\beta/2}$ is approximated by smooth functions $\Phi_\varepsilon$. Applying the Calderon-Zygmund estimates to the minimizers $V_\varepsilon$ of the regularized functionals, one obtains a uniform bound $\norm{V_\varepsilon}_{W^{2,p}}\leq C$ which allows to conclude, letting $\varepsilon\to 0$, that $V\in W^{2,p}$.

 Let $\mu:=-\Delta V + H$. For any smooth $\varphi\geq 0$ with support inside $\lbrace V>-\beta/2\rbrace$, we may apply \eqref{varineq} with $W=V-\varepsilon\varphi$ for small $\varepsilon$. We deduce that $\mu\leq 0$ in $\lbrace V>-\beta/2\rbrace$. Similarly, we show that $\mu\geq 0$ in $\lbrace V<\beta/2\rbrace$. It follows that \eqref{freebound} holds.

For the converse, assume that \eqref{freebound} holds. It implies in particular that
\begin{equation*}
\Delta V = H\one_{|V|<\beta/2}.
\end{equation*} 
Let $W\in H^1(\mathcal M)$ with $|W|\leq \beta/2$, and define $\varphi=W-V$, so that 
\begin{equation*}
\varphi \geq 0 \text{ in }\lbrace V=-\beta/2\rbrace,\quad\text{ and }\varphi\leq 0 \text{ in }\lbrace V=\beta/2\rbrace.
\end{equation*}
Integrating by parts, we obtain
\begin{equation*}
\begin{split}
\int_{\mathcal M}\! \left( \nabla V\cdot\nabla\varphi + H\varphi\right) & = 
\int_{\mathcal M}\! H\varphi(1-\one_{|V|<\beta/2}) \\
& = \int_{\lbrace V=-\beta/2\rbrace}\!\!\! H\varphi + \int_{\lbrace V=\beta/2\rbrace}\!\!\! H\varphi \\
& \geq 0,
\end{split}
\end{equation*}
where the last inequality comes from the inequalities satisfied by $H$ and $\varphi$ in $\lbrace V=\pm\beta/2\rbrace$. This shows that \eqref{varineq} is satisfied.
\end{proof}

\bibliographystyle{plain}
\bibliography{ref}

\end{document}